\newtheorem{lemma}{Lemma}[section]
\newtheorem{proposition}[lemma]{Proposition}
\newtheorem{theorem}[lemma]{Theorem}
\newtheorem{remark}[lemma]{Remark}
\renewcommand{\epsilon}{\varepsilon }
\newcommand{\R}{\mathbb{R}}
\newcommand{\Sn}{\mathbb{S}}
\begin{document}
\title[Infinitely many solutions]%
{Infinitely many solutions for the prescribed scalar curvature
problem on $\Sn^N$}

\author{  Juncheng Wei}

\address{Department of Mathematics,
  The Chinese University of Hong Kong,
  Shatin, Hong Kong }
\email{wei@math.cuhk.edu.hk}

\author{Shusen Yan}
\address{
School  of Mathematics, Statistics and Computer Science, The
University of New
England,  Armidale, NSW 2351,   Australia}
\email{syan@turing.une.edu.au}

\subjclass{Primary 35B40, 35B45; Secondary 35J40}
\date{}


\begin{abstract}

We  consider the following prescribed scalar curvature problem on $ \Sn^N$
\[ (*) \ \ \left\{\begin{array}{l}
- \Delta_{\Sn^N} u + \frac{N(N-2)}{2} u = \tilde{K} u^{\frac{N+2}{N-2}} \ \mbox{on} \ \Sn^N,
\\
u >0
\end{array}
\right.
\]
where $ \tilde{K}$ is positive and rotationally symmetric.
 We show that if $\tilde{K}$ has a local maximum point  between the poles then equation (*) has {\bf infinitely many non-radial positive} solutions, whose energy can be made arbitrarily large.

\end{abstract}

\maketitle
\baselineskip 18pt

\section{Introduction}
\setcounter{equation}{0}

Consider the standard $N$-sphere $(\Sn^N, g_0)$, $N \geq 3$. Let $ \tilde{K}$ be a fixed smooth function. The prescribed curvature problem asks if one can find a conformally invariant metric $g$ such that the scalar curvature becomes $\tilde{K}$. The problem consists in solving the following equation on $\Sn^N$:
\begin{equation}
\label{1.4n}
\left\{\begin{array}{l}
- \Delta_{\Sn^N} u + \frac{N(N-2)}{2} u + \tilde{K} u^{\frac{N+2}{N-2}}=0 \ \mbox{on} \ \Sn^N,
\\
u >0.
\end{array}
\right.
\end{equation}

 Problem  (\ref{1.4n}) does not always admit a solution. A first necessary condition for the existence  is that $ \max_{\Sn^N} \tilde{K}>0$, but there are also some obstructions, which are said of {\em topological type}. For example, a necessary condition is the following  Kazdan-Warner condition:
\begin{equation}
\label{ka}
\int_{\Sn^N} \nabla \tilde{K} \cdot \nabla x u^{\frac{2N}{N-2}}=0.
\end{equation}

 The problem of determining which $\tilde{K}$  admits a solution to
(\ref{1.4n})  has been studied extensively. See \cite{AAP},
\cite{bc}--\cite{cl2}, \cite{DN}--\cite{LL}, \cite{N, NY, Y}   and
the references therein.  Some existence results have been obtained
under some assumptions involving the Laplacian at the critical point
of $\tilde{K}$, see Chang-Yang \cite{CY}, Bahri-Coron \cite{bc} and Schoen-Zhang \cite{sz} for the case
$N=3$, and Y. Li \cite{L4} for the case $ N \geq 4$. For example, in Bahri
and Coron \cite{bc},
 it is assumed that $ \tilde{K}$ is a positive
Morse function with $\Delta \tilde{K} (x) \not =0$ if $\nabla
\tilde{K} (x) =0$, then if $m(x)$ denotes the Morse index of the
critical point $x$ of $K$, (\ref{1.4n}) has a solution provided that
\[ \sum_{ \nabla \tilde{K} =0, \Delta \tilde{K} (x) <0} (-1)^{m(x)} \not = -1.\]
The result has been extended to any $\Sn^N, N \geq 3$ by Y.Li in  \cite{L3}-\cite{L4}. Roughly, it is assumed that there exists $\beta,  N-2<\beta <N$ such that
\begin{equation}
\tilde{K} (\xi)= \tilde{K}(\xi_0) + \sum_{j=1}^N a_j |\xi_j-\xi_{0, j}|^\beta + h.o.t.
\end{equation}
where $a_j \not =0, \sum_{j=1}^N a_j \not =0$. Let $ \Sigma= \{ \xi: \nabla \tilde{K} (\xi)=0, \sum_{j=1}^N a_j <0\}$ and $ i(\xi)$ be the number of $ a_j$ such that $\tilde{K} (\xi)=0, a_j <0$. Then (\ref{1.4n}) has a solution provided
\begin{equation}
\sum_{\xi \in \Sigma} (-1)^{i (\xi)} \not = (-1)^N.
\end{equation}

  By using the stereo-graphic  projection,
the prescribed scalar curvature problem (\ref{1.4n}) can be reduced to \eqref{1.4}
\begin{equation}
\label{1.4}
\begin{cases}
 -\Delta u =K(y)u^{\frac{N+2}{N-2}}, u >0, & y\in  \R^N \\
  u \in D^{1,2}(\R^N)
\end{cases}
\end{equation}
where $D^{1,2} (\R^N)$ denotes the completion of $C_0^\infty (\R^N)$ under the norm $ \int_{\R^N} |\nabla u|^2$.

Much less  is known  about the multiplicity of the solutions of (\ref{1.4}).  Amrosetti, Azorero and Peral \cite{AAP}, and  Cao, Noussair and Yan \cite{CNY}  proved the existence of two or many solutions if  $K$ is a perturbation of the constant, i.e.
\begin{equation}
\tilde{K}= K_0 + \epsilon h(x), 0<\epsilon <<1.
\end{equation}  
On the other hand,
 Y. Li proved in \cite{L1} that
\eqref{1.4} has infinitely many solutions if $K(x)$ is periodic,
while similar result was obtained in \cite{Y} if $K(x)$ has a
sequence of strict local maximum points tending to infinity. Note
that this condition for $K(x)$ at the infinity implies that the
corresponding function $\tilde{K}$ defined on $\Sn^N$ has a singularity at the
south pole.

In this paper,  we consider the simplest  case, i.e., $ \tilde{K}$ is rotationally symmetric, $ K=K(r), r=|y|$.  It follows from the Pohozaev identity (\ref{ka})  that \eqref{1.4} has no
solution if $K'(r)$ has fixed sign. Thus we assume that {\em $K$ is positive and not monotone}.  On the other hand, Bianchi
\cite{B1} showed that any solution of \eqref{1.4} is radially symmetric
 if there is a $r_0>0$, such that $K(r) $ is non-increasing in $(0,r_0]$, and
non-decreasing in $[r_0,+\infty)$. Moreover, in \cite{BE}, it was
proved that \eqref{1.4} has no  solutions for some function
$K(r)$, which  is non-increasing in $(0,1]$, and non-decreasing in
$[1,+\infty)$. Therefore, we see that to obtain a solution for \eqref{1.4},
it is natural to assume that {\em $K(r)$ has a local maximum at
$r_0>0$}. The purpose of this paper is to answer the following two questions:

\medskip

\noindent
{\bf \it Q1: Does the existence of a local maximum of $K$ guarantee the existence of a solution to (\ref{1.4})?}

\medskip

\noindent
{\bf \it Q2: Are there non-radially symmetric solutions  to (\ref{1.4})?}

(Question $Q2$ has been asked by Bianchi \cite{B1}.)

\medskip

The aim of this paper is to show that if  $K(r)$ has a local maximum
at $r_0>0$, then \eqref{1.4} has {\bf infinitely many non-radial
solutions}.  This answers Q1 and Q2 affirmatively.   As far as we know, we believe our result is the first on the existence of infinitely many solution for (\ref{1.4}).

 We assume that $K(r)$  satisfies the following condition:

(K):  There is a  constant $r_0>0$, such that
\[
K(r)= K(r_0)- c_0|r-r_0|^m+ O(|r-r_0|^{m+\theta}),\quad r\in (r_0-\delta, r_0+\delta),
\]
where $c_0>0$, $\theta>0$ are some constants, and the constant $m$
satisfies  $m\in [2,N-2)$.

Without loss of generality, we assume that
\[
K(r_0)=1.
\]

Our main result in this paper can be stated as follows:

\vskip 0.5cm

\begin{theorem}
\label{main} Suppose that $N\ge 5$.  If $K(r)$ satisfies (K), then
problem (\ref{1.4}) has infinitely many non-radial solutions.
\end{theorem}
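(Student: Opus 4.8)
The strategy is a finite-dimensional Lyapunov--Schmidt reduction, producing for each large integer $k$ a solution of \eqref{1.4} close to a superposition of $k$ standard bubbles placed at the vertices of a regular $k$-gon inscribed in a circle of radius $\approx r_0$ in a fixed coordinate $2$-plane. Such an approximate solution has energy $\approx k\,I_\infty(U)$, where $U(y)=(N(N-2))^{(N-2)/4}(1+|y|^2)^{-(N-2)/2}$ is the Aubin--Talenti instanton and $I_\infty$ is the energy attached to $-\Delta u=u^{(N+2)/(N-2)}$; hence letting $k\to\infty$ yields infinitely many solutions with energy tending to $+\infty$, and each of them, carrying a tall spike near $x_1$ while being small away from the $x_j$'s, is manifestly non-radial.

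Set $U_{\xi,\la}(y)=\la^{(N-2)/2}U(\la(y-\xi))$, $x_j=r\big(\cos\tfrac{2\pi j}{k},\sin\tfrac{2\pi j}{k},0,\dots,0\big)$ for $j=1,\dots,k$, and $W_{r,\la}=\sum_{j=1}^k U_{x_j,\la}$. I would work in the subspace $H_s\subset D^{1,2}(\R^N)$ of functions invariant under rotation by $2\pi/k$ in the $(y_1,y_2)$-plane, under $O(N-2)$ acting on $(y_3,\dots,y_N)$, and under $y_2\mapsto-y_2$; by Palais' principle of symmetric criticality a critical point of the energy $I$ on $H_s$ is a genuine solution. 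The point of the symmetry is that in $H_s$ the approximate kernel of the linearization of \eqref{1.4} at $W_{r,\la}$ is only two-dimensional, spanned (modulo symmetry) by $\partial_\la W_{r,\la}$ and $\partial_r W_{r,\la}$: the translations in $y_3,\dots,y_N$ are removed by the $O(N-2)$-symmetry and the tangential translation along the circle by the reflection $y_2\mapsto-y_2$. Writing $u=W_{r,\la}+\phi$ with $\phi\in H_s$ orthogonal to these two modes, one solves the projected (``auxiliary'') equation for $\phi=\phi_{r,\la}$ by a contraction-mapping argument in a suitable weighted norm, the two ingredients being: (a) the uniform invertibility of the linearized operator on the orthogonal complement, resting on the non-degeneracy of $U$ (Bianchi--Egnell, Rey) and on the separation $\la\,|x_i-x_j|\to\infty$; and (b) the smallness of the error $-\Delta W_{r,\la}-K W_{r,\la}^{(N+2)/(N-2)}$, whose leading contributions are the oscillation of $K$ over a single bubble, of size $O(\la^{-m})$ — here $\int_{\R^N}|z|^m U^{2N/(N-2)}<\infty$, i.e.\ $m<N$, is used — and the interaction of neighbouring bubbles, of size $(\la|x_i-x_{i\pm1}|)^{-(N-2)}\sim(k/(\la r))^{N-2}$.

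The core of the argument is the reduced functional $\mathcal F(r,\la)=I(W_{r,\la}+\phi_{r,\la})$, whose critical points are solutions. Expanding and invoking (K), one obtains, with positive constants $a,b$,
\[
\mathcal F(r,\la)=k\,I_\infty(U)+a\,k\,\la^{-m}\,\Psi\big(\la(r-r_0)\big)-b\,k^{N-1}\la^{-(N-2)}+(\text{lower order}),
\]
where $\Psi(s)=\int_{\R^N}|s+e\cdot z|^{m}\,U^{2N/(N-2)}(z)\,dz$ (for any fixed unit vector $e$) is even, convex, satisfies $\Psi\ge\Psi(0)>0$, and grows like $|s|^m$ at infinity; the term $-b\,k^{N-1}\la^{-(N-2)}$ collects the attractive bubble interaction summed over all pairs, the sum over ``levels'' converging because $N-2>1$. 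For each fixed $r$ the right-hand side has a non-degenerate interior maximum at some $\la=\la(r)\sim k^{(N-2)/(N-2-m)}$; here $m<N-2$ is precisely what forces $\la(r)\gg k$, so that the bubbles are indeed well separated, consistently with (b). Substituting $\la(r)$ back, $\bar{\mathcal F}(r):=\max_{\la}\mathcal F(r,\la)$ has a strict minimum near $r=r_0$, with value $k\,I_\infty(U)$ plus a positive quantity that degenerates like $\Psi$ near $r_0$ but is of order $k$ on $|r-r_0|=\delta$. A standard max--min over a rectangle $[r_0-\delta,r_0+\delta]\times[\la_1 k^{(N-2)/(N-2-m)},\la_2 k^{(N-2)/(N-2-m)}]$ — whose value $\min_r\max_{\la}\mathcal F$ is strictly below the values on the $r$-boundary and is attained at an interior point because $\la(r)$ lies in the interior of the $\la$-interval — then produces an interior critical point $(r_k,\la_k)$ of $\mathcal F$, hence a solution $u_k=W_{r_k,\la_k}+\phi_k$ of \eqref{1.4} with $I(u_k)=k\,I_\infty(U)+o(k)\to\infty$. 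These solutions are non-radial and pairwise distinct (their energies being distinct for large $k$), so varying $k$ gives infinitely many.

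The main obstacle is the analysis supporting step (b) together with the expansion of $\mathcal F$: one must fix function-space norms in which the error estimate, the inversion of the linearized operator, and the interaction expansion are all \emph{uniform} as $k\to\infty$, since the $k$ bubbles accumulate on the fixed circle, and one must control the remainder terms in $\mathcal F$ finely enough that they do not destroy the delicate saddle geometry created by the competition between the curvature term $k\,\la^{-m}\Psi$ and the interaction term $k^{N-1}\la^{-(N-2)}$. It is precisely the hypotheses $2\le m<N-2$ (which in particular force $N\ge5$) that make this competition simultaneously non-degenerate, resolvable (well-separated bubbles), and robust against the remainders.
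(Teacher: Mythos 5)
Your proposal follows essentially the same route as the paper: a Lyapunov--Schmidt reduction in a symmetric subspace around a sum of $k$ bubbles on a $k$-gon, carried out in weighted $L^\infty$ norms for uniformity in $k$, with the scaling parameter $\sim k^{(N-2)/(N-2-m)}$ dictated by the balance between the curvature defect $O(\lambda^{-m})$ and the interbubble interaction $O((k/\lambda)^{N-2})$, and finally a min--max in the two remaining free parameters $(r,\lambda)$ (which the paper renders rigorous via a linking/flow argument after the rescaling $u\mapsto\mu^{-(N-2)/2}u(\cdot/\mu)$ that makes $\Lambda$ bounded). The reduced-energy expansion you write, including the convex even profile $\Psi$ with its quadratic Taylor expansion at $0$, matches Propositions A.1--A.3 of the paper, and your identification of the saddle geometry (interior max in $\lambda$, strict interior min in $r$ near $r_0$) is exactly the structure the paper exploits.
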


\medskip

\begin{remark}
  Theorem~\ref{main} shows that the condition in \cite{B1} is
optimal. We shall prove Theorem~\ref{main} by constructing solutions
with large number of bubbles lying near the sphere $|y|=r_0$. So the
energy of these solutions can be made arbitrary large and the
distance  between different bubbles can be made arbitrary small.
When $ N=3, 4$, we know that the energy of the solutions to
(\ref{1.4}) is uniformly bounded and the distance between bubbles is
uniformly bounded  from below. See \cite{sz} (for $N=3$) and Theorem 0.10
of \cite{L4} (for $N=4$). On the other hand, if $ K (y)= K(y_0)
+O(|y-y_0|^m)$ where $ m \in [N-2, N)$ for $N \geq 5$,   the energy
of solutions is also be bounded. See \cite{L4}. So our assumptions
on $N$ and $ m$  are almost optimal in the construction of the
solutions in this paper.

\end{remark}

\begin{remark}
 The radial symmetry can be replaced by the following  weaker
symmetry assumption: after suitably rotating the coordinate system,

\begin{itemize}
\item[(K1)]
$  K(y)= K(y^{'}, y^{''}) = K(|y^{'}|, |y_{N_0+1}|, ...,
|y_{N}|), \ \mbox{where} \   y=(y^{'}, y^{''}) \in \R^{2} \times
\R^{N-2}$,

\item[(K2)]
$
  K(y)= K(y_0)- c_0 |y -y_0|^m + O( |y-y_0 |^{m+\theta}),
|y^{'}| \in (|y_0^{'}| -\delta, |y_0^{'}| +\delta), |y''|\le
\delta,
$
 where $y_0=(y'_0,0)$.
\end{itemize}

\end{remark}

\medskip

\begin{remark}

Theorem \ref{main} exhibits a {\bf new phenomena} for the prescribed scalar curvature problem. It suggests that if the critical points of $K$ are not isolated, new solutions to (\ref{1.4}) may bifurcate. We formulate the following conjecture in the general case.

\end{remark}

\medskip

\noindent {\it {\bf Conjecture:} Assume that the set $ \{ K(x)=
\max_{x \in \R^N} K(x) \}$ is an
 $l$-dimensional smooth manifold without
boundary, where $ 1\leq l \leq N-1$. The problem (\ref{1.4}) admits infinitely many positive
solutions.}

Before we close this introduction, let us outline the main idea in the proof of Theorem~\ref{main}.

Let us fix a positive integer
\[
 k \geq k_0,
 \]
where $k_0$ is large, to be determined  later.

Set
\[
\mu= k^{\frac{N-2}{N-2-m}},
\]
to be the scaling parameter.

Let $ 2^{*}=\frac{2N}{N-2}$. Using the transformation  $ u(y)\mapsto \mu^{-\frac{N-2}2}
u\bigl(\frac y\mu\bigr)$,  we find that \eqref{1.4} becomes

\begin{equation}
\label{n1.4}
\begin{cases}
 -\Delta u =K\bigl(\frac {|y|}\mu\bigr)u^{2^*-1}, u >0, & y\in  \R^N, \\
  u \in D^{1,2}(\R^N).
\end{cases}
\end{equation}

 It is well-known   that   the functions
$$
U_{x,\Lambda}(y)=\bigl(N(N-2)\bigr)^{\frac{N-2}4}\left(\frac{\Lambda }{1+\Lambda^2|y-x|^2}\right)^{\frac{N-2}{2}},
~~~ \mu>0,~~ x\in \R^N
$$
are the only solutions to the problem
$$
-\Delta u = u^{\frac{N+2}{N-2}}, ~~~~ u>0 ~~ \mbox{in  } \R^N.
$$

Let $y=(y',y'')$, $y'\in \R^2$, $y'' \in \R^{N-2}$.  Define

\[
\begin{split}
H_s=\bigl\{ u: & u\in D^{1,2}(\R^N), u\;\text{is even in} \;y_h, h=2,\cdots,N,\\
& u(r\cos\theta , r\sin\theta, y'')=
u(r\cos(\theta+\frac{2\pi j}k) , r\sin(\theta+\frac{2\pi j}k), y'')
\bigr\}.
\end{split}
\]

Let

\[
x_j=\bigl(r \cos\frac{2(j-1)\pi}k, r\sin\frac{2(j-1)\pi}k,0\bigr),\quad j=1,\cdots,k,
\]
where $0$ is the zero vector in $\R^{N-2}$, and let
\[
W_r(y)=\bigl(N(N-2)\bigr)^{\frac{N-2}4}\sum_{j=1}^k \frac{\Lambda^{\frac{N-2}2}}{(1+\Lambda^2|y-x_j|^2)^{\frac{N-2}2}}.
\]

In this paper, we always assume that

\[
r\in [r_0\mu-\frac1{\mu^{\bar\theta}},  r_0\mu
+\frac1{\mu^{\bar\theta}}], \quad\text{ for some small }\;\bar\theta>0,
\]
and

\[
L_0\le\Lambda\le L_1,\quad \text{ for some constants }\;L_1>L_0>0.
\]

Theorem~\ref{main} is a direct consequence of the following result:

\begin{theorem}
\label{th11} Suppose that $N\ge 5$.  If $K(r)$ satisfies (K), then
there  is an integer $k_0>0$, such that for any integer  $k\ge k_0$,
\eqref{n1.4} has a solution $u_k$ of the form
\[
u_k = W_{r_k}(y)+\omega_k,
\]
where  $\omega_k\in H_s$, and as $k\to +\infty$,
$\|\omega\|_{L^\infty}\to 0$,  $r_k\in
[r_0\mu-\frac1{\mu^{\bar\theta}},  r_0\mu
+\frac1{\mu^{\bar\theta}}]$ .

\end{theorem}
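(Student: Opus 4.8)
The plan is to use the standard Lyapunov–Schmidt reduction adapted to the many-bubble setting, treating $(r,\Lambda)$ as the reduction parameters and the rotational/reflection symmetry encoded in $H_s$ to cut down the kernel. First I would set up the functional
\[
I(u)=\frac12\int_{\R^N}|\nabla u|^2-\frac1{2^*}\int_{\R^N}K\Bigl(\tfrac{|y|}\mu\Bigr)|u|^{2^*}
\]
on $H_s$, write $u=W_r+\phi$ with $\phi$ in the orthogonal complement (in the $D^{1,2}$ sense) of the approximate kernel $\{\partial_r W_r,\ \partial_\Lambda W_r\}$, and solve the infinite-dimensional part: for each admissible $(r,\Lambda)$ there is a unique small $\phi=\phi_{r,\Lambda}\in H_s$ solving the projected equation. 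The key analytic input here is the invertibility, uniformly in $k$, of the linearized operator $L_{r,\Lambda}$ restricted to the symmetric space and to the orthogonal complement; the symmetry forces the only small eigenvalues to come from the two explicit parameters $r$ and $\Lambda$, so $L_{r,\Lambda}^{-1}$ is bounded independently of $k$. One then estimates the error term $W_r$ produces when plugged into \eqref{n1.4}, measured in a suitable weighted $L^{2^*/(2^*-1)}$ or $L^p$ norm, using that the bubbles are centered at mutual distance $\sim r/k$ along the circle; the interaction between neighbouring bubbles is the delicate quantity and it is controlled because $\Lambda$ stays bounded and $r\sim r_0\mu=r_0 k^{(N-2)/(N-2-m)}$, so $\Lambda r/k\to\infty$. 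A contraction-mapping argument then yields $\phi_{r,\Lambda}$ with $\|\phi_{r,\Lambda}\|\to0$ as $k\to\infty$, together with $C^1$-dependence on $(r,\Lambda)$.

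Second, I would reduce to the finite-dimensional problem: the function $(r,\Lambda)\mapsto \mathcal F_k(r,\Lambda):=I(W_r+\phi_{r,\Lambda})$ has the property that its critical points give genuine solutions of \eqref{n1.4}. Expanding, one gets
\[
\mathcal F_k(r,\Lambda)=k\Bigl(A_0 + A_1\Lambda^{N-2}\bigl(K(r_0)-K(\tfrac r\mu)\bigr)
-A_2\tfrac{1}{(\Lambda r/k)^{N-2}} + \text{l.o.t.}\Bigr),
\]
where the term $A_1\Lambda^{N-2}(K(r_0)-K(r/\mu))$ comes from the variation of $K$ near its local maximum (using hypothesis (K), this is $\sim c_0\Lambda^{N-2}|r/\mu-r_0|^m=c_0\Lambda^{N-2}\mu^{-m}|r-r_0\mu|^m$ in the relevant window), and the term $-A_2(\Lambda r/k)^{-(N-2)}$ is the bubble–bubble interaction energy, which after inserting $r\sim r_0\mu$ and $\mu=k^{(N-2)/(N-2-m)}$ becomes $\sim k^{-(N-2)}\mu^{-(N-2)}=\mu^{-(N-2)}\mu^{-(N-2-m)}$, i.e. of the same order $\mu^{-(2N-4-m)}$ as the $K$-term scaled appropriately. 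The choice $\mu=k^{(N-2)/(N-2-m)}$ is precisely what balances these two effects, so $\mathcal F_k/k$, after subtracting constants and rescaling, is (to leading order) a fixed function of the scaled variables $\tau=(r-r_0\mu)\mu^{-1}$-type and $\Lambda$ with a strict local maximum in the interior of the box $\{r_0\mu-\mu^{-\bar\theta}\le r\le r_0\mu+\mu^{-\bar\theta},\ L_0\le\Lambda\le L_1\}$ (for suitable $L_0,L_1,\bar\theta$, and using $m<N-2$ so the interaction term wins at the boundary in $r$, and $m\ge2$ so the expansion is valid).

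Third, I would run a max–min or degree argument: since $\mathcal F_k$ is continuous on the compact box, it attains its maximum at some $(r_k,\Lambda_k)$; the expansion above, together with the fact that on the boundary $\partial$-box the value of $\mathcal F_k$ is strictly smaller (the $r$-boundary is handled by the interaction term which grows like $k^{N-2}$ there relative to the nearby-bubble gain, and the $\Lambda$-boundary by choosing $L_0$ small and $L_1$ large so the leading profile in $\Lambda$ has its max interior), shows the maximizer is interior and hence a critical point, giving the desired solution $u_k=W_{r_k}(y)+\omega_k$ with $\omega_k=\phi_{r_k,\Lambda_k}$ and $\|\omega_k\|_{L^\infty}\to0$.

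The main obstacle I expect is twofold and both parts are quantitative rather than conceptual: (i) obtaining the linear estimate for $L_{r,\Lambda}$ uniformly in $k$ in a norm strong enough to recover $L^\infty$ smallness of $\omega_k$ — this requires a careful weighted-norm choice (sums of translated powers $(1+|y-x_j|)^{-\sigma}$) so that the many bubbles do not accumulate error, and a blow-up/contradiction argument exploiting the symmetry to rule out the unwanted kernel; and (ii) making the energy expansion precise enough that the two competing terms — the curvature defect from (K) and the $O(k)$ bubble-interaction sum $\sum_{i\ne j}(\Lambda|x_i-x_j|)^{-(N-2)}\sim k\cdot k^{N-2}/(\text{stuff})$ — are both computed with errors genuinely smaller than the gap between them, which is exactly where the constraints $N\ge5$, $2\le m<N-2$, and the precise exponent in $\mu=k^{(N-2)/(N-2-m)}$ are used. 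Everything else (the $C^1$-closeness of $\phi_{r,\Lambda}$ in the parameters, the identification of critical points of $\mathcal F_k$ with solutions, the positivity of $u_k$ by maximum principle once $\|\omega_k\|_{L^\infty}$ is small) is routine.
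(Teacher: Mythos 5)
Your plan — Lyapunov--Schmidt reduction in the symmetric space $H_s$ with $(r,\Lambda)$ as parameters, a contraction-mapping solution of the projected equation, and a variational argument on the reduced energy — is indeed the route the paper takes, and you correctly identify the uniform-in-$k$ linear estimate and the sharpness of the energy expansion as the delicate points. However, the final variational step as you describe it does not go through, because after fixing the exponents the reduced profile is a \emph{saddle}, not an interior maximum. After unscaling the bubble, the curvature contribution to $F(r,\Lambda)=I(W_r+\phi)$ is $\sim B_1\Lambda^{-m}\mu^{-m}+B_2\Lambda^{-(m-2)}\mu^{-m}(\mu r_0-r)^2$ (so the $\Lambda$-dependence of $\int(K-1)U_{x_1,\Lambda}^{2^*}$ is $\Lambda^{-m}$, not the $\Lambda^{N-2}$ in your expansion), while the interaction contributes $-B_3\Lambda^{-(N-2)}\sum_{j\ge 2}|x_1-x_j|^{-(N-2)}$. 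Since $m<N-2$, the competition between $\Lambda^{-m}$ and $\Lambda^{-(N-2)}$ produces a strict interior \emph{maximum} of $F$ in $\Lambda$ at $\Lambda_0$, whereas the $(\mu r_0-r)^2$ term produces a strict interior \emph{minimum} of $F$ in $r$ at $r=\mu r_0$. Hence neither $F$ nor $-F$ attains an interior maximum on the admissible box; a straight ``compactness plus boundary comparison'' maximization is pushed to the $\Lambda$-boundary, and your argument as written produces no critical point.

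The paper meets this with a linking/min--max construction replacing your ``maximizer is interior'' step: it sets $\bar F=-F$ on a shrinking box $D$, defines $c=\inf_{h\in\Gamma}\max_{D}\bar F(h(\cdot))$ over continuous deformations $h$ that are the identity on the $r$-boundary $|r-\mu r_0|=\mu^{-\bar\theta}$, shows $\alpha_1<c<\alpha_2$ with the $r$-boundary values strictly below $\alpha_1$, and — crucially — proves (Proposition 3.3) that the negative gradient flow of $\bar F$ cannot leave $D$ through the $\Lambda$-boundary $|\Lambda-\Lambda_0|=\mu^{-\frac32\bar\theta}$, so the standard deformation lemma applies inside $D$. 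That flow-confinement on the $\Lambda$-boundary is precisely what copes with the saddle geometry and is a genuine missing ingredient in your proposal. A further caution: your first suggestion of measuring the error in a weighted $L^{2^*/(2^*-1)}$- or $L^p$-norm will not give constants uniform in $k$; the paper emphasizes that the sums-of-translated-powers weighted $L^\infty$-norms $\|\cdot\|_{*}$, $\|\cdot\|_{**}$ are essential to the reduction, not an optional refinement.
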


We will use the techniques in the singularly perturbed elliptic
problems to prove Theorem~\ref{th11}.  We know that there is always
a small parameter in a singularly perturbed elliptic problem. Although
there is no parameter in \eqref{1.4}, we use $k$, {\bf the number of the
bubbles} of the solutions, as the parameter in the construction of
bubbles solutions for \eqref{1.4}. This is the  {\bf new idea} of this
paper. This is partly motivated by recent paper of Lin-Ni-Wei \cite{LNW} where they constructed multiple spikes to a singularly perturbed problem.  There they allowed the number of spikes to depend on the small parameter.

The main difficulty in constructing solution with $k$-bubbles is
that we need to obtain a better control of the error terms. Since
the number of the bubbles is large, it is very hard to carry out the
reduction procedure by using the standard norm as in \cite{Ba,R}.
Noting that the maximum norm will not be affected by the number of
the bubbles, we will carry out the reduction procedure in a space
with weighted maximum norm. Similar  weighted maximum norm has been
used in \cite{DFM},\cite{RW1}--\cite{RW3}. But the estimates in the
reduction procedure in this paper are much more complicated than
those in \cite{DFM},\cite{RW1}--\cite{RW3}, because the number of
the bubbles  is large.

\medskip

\noindent
{\bf Acknowledgment.} The first
author is supported by an Earmarked
Grant  from RGC  of Hong Kong. The second author is partially supported
by ARC.

\section{Finite-dimensional Reduction}
\setcounter{equation}{0}

 In this section, we perform a finite-dimensional reduction.

 Let
\begin{equation}
\|u\|_{*} = \sup_{y \in \R^N }\Bigl(
\sum_{j=1}^k \frac1{(1+ |y-x_j|)^{ \frac{N-2}2+\tau
}}\Bigr)^{-1}|u(y)| ,
\end{equation}
and

\begin{equation}
\|f\|_{**} = \sup_{y \in \R^N }\Bigl(
\sum_{j=1}^k \frac1{(1+ |y-x_j|)^{ \frac{N+2}2+\tau
}}\Bigr)^{-1}|f(y)|,
\end{equation}
where $\tau=1+\bar \eta$ and $\bar\eta>0$ is small.

Let
 \[
Z_{i,1}=\frac{\partial U_{x_i,\Lambda}}{\partial r},\quad
 Z_{i,2}=\frac{\partial U_{x_i,\Lambda}}{\partial \Lambda} .
  \]

 Consider

\begin{equation}\label{lin}
\begin{cases}
-\Delta \phi_k -(2^*-1)K\bigl(\frac{|y|}{\mu}\bigr)
W_r^{2^*-2}\phi_k=&h_k+c_{1}\sum_{i=1}^k U_{x_i,\Lambda}^{2^*-2}Z_{i,1} +c_2 \sum_{i=1}^k U_{x_i,\Lambda}^{2^*-2}Z_{i,2},
\;\;
\text{in}\;
\R^N,\\
\phi_k\in H_s,\\
< U_{x_i,\Lambda}^{2^*-2}Z_{i, l},\phi_k>=0  & i=1,\cdots,k,\; l=1,2
\end{cases}
\end{equation}
for some numbers $c_{i}$, where $<u,v>=\int_{\R^N}uv$.

\begin{lemma}\label{l1}
Assume that $\phi_k$ solves (\ref{lin}) for $h=h_{k}$. If $\Vert
h_{k}\Vert_{**}$ goes to zero as $k$ goes to infinity, so does
$\Vert \phi_k\Vert_{*}$.
\end{lemma}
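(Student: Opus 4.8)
The plan is to argue by contradiction, using the standard blow-up / a priori estimate technique for linearized operators around a sum of bubbles. Suppose the conclusion fails: there exist a sequence $k\to\infty$, parameters $r=r_k$, $\Lambda=\Lambda_k$ in the admissible ranges, right-hand sides $h_k$ with $\|h_k\|_{**}\to 0$, multipliers $c_1^{(k)},c_2^{(k)}$, and solutions $\phi_k\in H_s$ of \eqref{lin} with the orthogonality conditions, normalized so that $\|\phi_k\|_*=1$. I will then derive a contradiction by showing $\|\phi_k\|_*\to 0$.

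The first step is to get a pointwise bound on $\phi_k$ from the equation via a comparison/barrier argument. Writing $\phi_k$ in integral form against the Green's function of $-\Delta$ on $\R^N$, and using that $W_r^{2^*-2}(y)\lesssim \sum_j (1+|y-x_j|)^{-4}$ near each bubble while $K$ is bounded, one estimates
\[
|\phi_k(y)|\le C\|h_k\|_{**}\,V_k(y)+C\|\phi_k\|_*\int_{\R^N}\frac{1}{|y-z|^{N-2}}\Bigl(\sum_j\frac{1}{(1+|z-x_j|)^4}\Bigr)\Bigl(\sum_j\frac{1}{(1+|z-x_j|)^{\frac{N-2}2+\tau}}\Bigr)\,dz + (\text{multiplier terms}),
\]
where $V_k(y)=\sum_j (1+|y-x_j|)^{-(\frac{N-2}{2}+\tau)}$ is the weight defining $\|\cdot\|_*$. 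The key convolution estimate — that this integral is bounded by $\bigl(\theta+o(1)\bigr)V_k(y)$ for some $\theta<1$, uniformly in $k$ — is where the choice $\tau=1+\bar\eta$ with $\bar\eta$ small and $N\ge5$ enters: one splits the integral into the region near a fixed bubble $x_j$ and the far region, uses that the mutual distances $|x_i-x_j|\sim r/k\to\infty$ (since $r\sim r_0\mu=r_0 k^{(N-2)/(N-2-m)}$ grows faster than $k$), and controls the cross terms by the ``$\tau<2$'' and ``$\frac{N-2}2+\tau<N-2$'' type inequalities. The multiplier terms are handled by first estimating $c_l^{(k)}$: pair \eqref{lin} with $Z_{i,l}$, integrate, and use the near-orthogonality of the $Z_{i,l}$'s together with $\langle U_{x_i,\Lambda}^{2^*-2}Z_{i,l},\phi_k\rangle=0$ to get $|c_l^{(k)}|\le C\|h_k\|_{**}+o(1)\|\phi_k\|_*$, which makes the multiplier contribution negligible. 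Combining, $\|\phi_k\|_*\le C\|h_k\|_{**}+(\theta+o(1))\|\phi_k\|_*$, hence $\|\phi_k\|_*\to 0$ unless the ``$o(1)\|\phi_k\|_*$'' piece is actually not small — which is exactly the local part of the convolution near each bubble that cannot be absorbed.

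To close that gap, the second step is a local blow-up analysis at the bubbles. Since $\|\phi_k\|_*=1$, after passing to a subsequence there is a bubble index (by symmetry, say $j=1$) where $\|\phi_k\|_*$ is essentially attained; set $\tilde\phi_k(z)=\phi_k(z+x_1)$. On compact sets $\tilde\phi_k$ is bounded (by the weighted bound), so it converges in $C^1_{loc}$ to a bounded $\phi_\infty$ solving the limit linearized equation $-\Delta\phi_\infty=(2^*-1)U_{0,\Lambda_\infty}^{2^*-2}\phi_\infty$ in $\R^N$ (the other bubbles drift to infinity and $K(|y|/\mu)\to K(r_0)=1$). By the nondegeneracy of the standard bubble, $\phi_\infty$ is a linear combination of $\partial_{a}U_{0,\Lambda_\infty}$ and $\partial_\Lambda U_{0,\Lambda_\infty}$; the evenness in $y_2,\dots,y_N$ and the $k$-fold symmetry kill the translation directions that are not radial in $y'$, and the passage to the limit of the orthogonality conditions $\langle U^{2^*-2}Z_{1,l},\phi_k\rangle=0$ forces $\phi_\infty\equiv 0$. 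This means $\tilde\phi_k\to 0$ uniformly on compacts, and feeding this back into the pointwise estimate upgrades the bound to $\|\phi_k\|_*\to 0$, contradicting $\|\phi_k\|_*=1$.

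The main obstacle I expect is the uniform-in-$k$ convolution estimate in Step 1: because the number of bubbles grows, one is summing $k$ kernels and must show the total still behaves like a single copy of the weight $V_k$ with a constant strictly less than $1$ (not merely $O(1)$, which would be useless for the absorption argument). This requires carefully exploiting that the inter-bubble separation $|x_i-x_j|\ge c\, r/k$ tends to infinity, that the exponents satisfy the sharp inequalities forced by $2\le m<N-2$ and $N\ge5$ (guaranteeing $r/k\to\infty$ and making the tails summable), and that $\tau$ can be taken in $(1,2)$; the bookkeeping of near-bubble versus far-field contributions, summed over all $j$, is the technical heart of the lemma. Everything else — the multiplier estimate and the local nondegeneracy argument — is routine once this is in place.
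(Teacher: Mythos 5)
Your proposal follows the same blow-up/a-priori-estimate scheme as the paper: contradiction with $\|\phi_k\|_*=1$, Green's function representation, weighted convolution estimate, multiplier estimate via pairing with $Z_{1,l}$, and finally a local blow-up at a bubble killed by symmetry plus the orthogonality constraints. That is precisely the structure of the paper's proof of Lemma~\ref{l1}, and the multiplier and blow-up steps are correct as you describe them.

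One imprecision worth flagging: the claimed convolution estimate ``$\le(\theta+o(1))V_k(y)$ with $\theta<1$ uniformly'' cannot be true, essentially because near each bubble $x_j$ the potential $(2^*-1)W_r^{2^*-2}$ approaches $(2^*-1)U_{x_j,\Lambda}^{2^*-2}$, whose eigenvalue on the tangent directions is exactly $1$ --- so no uniform strict contraction is possible. Your own text hedges (``unless the $o(1)\|\phi_k\|_*$ piece is actually not small ... the local part near each bubble cannot be absorbed''), which is correct, but notice the hedge contradicts the $\theta<1$ claim: if $\theta<1$ held there would be nothing to finish. What the paper's Lemma~\ref{laa3} actually yields is the weaker (and correct) bound $C\sum_j(1+|y-x_j|)^{-(\frac{N-2}{2}+\tau+\theta)}+o(1)V_k(y)$, i.e.\ an \emph{improved-decay} term with an $O(1)$ constant plus an $o(1)$ multiple of $V_k$. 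This implies the ratio $|\phi_k(y)|/V_k(y)$ can only approach $1$ when $y$ lies within a fixed distance $R$ of some $x_i$, which is exactly what makes the blow-up step non-vacuous. Since you do carry out the blow-up, the net argument is the same as the paper's; but the intermediate inequality should be stated in the improved-decay form rather than as a strict contraction.
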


\begin{proof}

We argue by contradiction.  Suppose that there are $k\to +\infty$,
$h=h_{k}$, $\Lambda_k\in [L_1,L_2]$, $r_k\in
[r_0\mu-\frac1{\mu^{\bar\theta}}, r_0\mu
+\frac1{\mu^{\bar\theta}}]$, and $\phi_k$ solving (\ref{lin}) for
$h=h_{k}$, $\Lambda=\Lambda_k$, $r=r_k$,  with
 $\Vert
h_{k}\Vert_{**}\to 0$, and $\|\phi_k\|_*\ge c'>0$.  We may assume
that $\|\phi_k\|_*=1$. For simplicity, we drop the subscript $k$.

We rewrite \eqref{lin} as

\begin{equation}\label{g}
\begin{split}
\phi(y)=&(2^*-1)\int_{\R^N}\frac1{|z-y|^{N-2}}K\bigl(\frac{|z|}{\mu}\bigr)
W_{r}^{2^*-2}\phi(z)\,dz\\
&+\int_{\R^N}\frac1{|z-y|^{N-2}}\bigl(
h(z)+c_{1}\sum_{i=1}^kZ_{i,1}(z)U_{x_i,\Lambda}^{2^*-2}(z) +c_2 \sum_{i=1}^k Z_{i,2}(z)U_{x_i,\Lambda}^{2^*-2}(z)
\bigr)\,dz.
\end{split}
\end{equation}

Using Lemma~\ref{laa3}, we have

\begin{equation}\label{g1}
\begin{split}
&\Bigl|(2^*-1)\int_{\R^N}\frac1{|z-y|^{N-2}}K\bigl(\frac{|z|}{\mu}\bigr)
W_{r}^{2^*-2}\phi(z)\,dz\Bigr|\\
\le & C \|\phi\|_* \int_{\R^N}\frac1{|z-y|^{N-2}}
W_{r}^{2^*-2}\sum_{j=1}^k \frac1{(1+|z-x_j|)^{\frac{N-2}2+\tau}}\,dz\\
\le & C \|\phi\|_*\bigl(\sum_{j=1}^k \frac1{(1+|y-x_j|)^{\frac{N-2}2+\tau+\theta}}+o(1)
\sum_{j=1}^k \frac1{(1+|y-x_j|)^{\frac{N-2}2+\tau}}\bigr).
\end{split}
\end{equation}

It follows from Lemma~\ref{laa2} that

\begin{equation}\label{g2}
\begin{split}
&\Bigl|\int_{\R^N}\frac1{|z-y|^{N-2}}
h(z)
\,dz\Bigr|\\
\le & C\|h\|_{**}\int_{\R^N}\frac1{|z-y|^{N-2}}\sum_{j=1}^k \frac{1}
{(1+|z-x_j|)^{\frac{N+2}2+\tau}}\,dz\\
\le & C\|h\|_{**}\sum_{j=1}^k \frac{1}
{(1+|y-x_j|)^{\frac{N-2}2+\tau}},
\end{split}
\end{equation}
and

\begin{equation}\label{g3}
\begin{split}
&\Bigl|\int_{\R^N}\frac1{|z-y|^{N-2}}\sum_{i=1}^k
Z_{i,l}(z)U_{x_i,\Lambda}^{2^*-2}(z)
\,dz\Bigr|\\
\le & C\sum_{i=1}^k\int_{\R^N}\frac1{|z-y|^{N-2}} \frac{1}
{(1+|z-x_i|)^{N+2}}\,dz
\le  C \sum_{i=1}^k \frac{1}
{(1+|y-x_i|)^{\frac{N-2}2+\tau}}.
\end{split}
\end{equation}

Next, we estimate  $c_l$, $l=1,2$. Multiplying \eqref{lin} by
$Z_{1,l}$ and integrating, we see that $c_t$ satisfies

\begin{equation}\label{g6}
\sum_{t=1}^2\sum_{i=1}^k \bigl\langle  U_{x_i,\Lambda}^{2^*-2} Z_{i,t}, Z_{1,l} \bigr\rangle c_t=
\bigl\langle
-\Delta \phi
-(2^*-1)K\bigl(\frac{|y|}{\mu}\bigr)
W_{r}^{2^*-2}\phi, Z_{1,l} \bigr\rangle- \bigl\langle h, Z_{1,l} \bigr\rangle.
\end{equation}

It follows from Lemma~\ref{laa1} that

\[
\begin{split}
&\bigl|\bigl\langle h, Z_{1,l} \bigr\rangle\bigr| \le C\|h\|_{**}
\int_{\R^N}\frac1{(1+|z-x_1|)^{N-2}}\sum_{j=1}^k \frac1{(1+|z-x_j|)^{\frac{N+2}2+\tau}}\,dz\\
\le & C \|h\|_{**}.
\end{split}
\]

 On the other hand,

\begin{equation}\label{g4}
\begin{split}
& \bigl\langle
-\Delta \phi
-(2^*-1)K\bigl(\frac{|z|}{\mu}\bigr)
W_{r}^{2^*-2}\phi, Z_{1,l} \bigr\rangle\\
=&  \bigl\langle
-\Delta Z_{1,l}
-(2^*-1)K\bigl(\frac{|z|}{\mu}\bigr)
W_{r}^{2^*-2} Z_{1,l}, \phi\bigr\rangle\\
=& (2^*-1)\bigl\langle \bigl(1-
K\bigl(\frac{|z|}{\mu}\bigr)
W_{r}^{2^*-2} Z_{1,l} , \phi\bigr\rangle\\
= &\|\phi\|_* O\Bigl(
\int_{\R^N} \bigl|K\bigl(\frac{|z|}{\mu}\bigr)-1\bigr|
W_{r}^{2^*-2}(z) \frac1{(1+|z-x_1|)^{N-2}}\sum_{j=1}^k \frac1{(1+|z-x_j|)^{\frac{N-2}2+\tau}}\,dz\Bigr).
\end{split}
\end{equation}

Similar to the proof of Lemma~\ref{laa3}, we obtain

\[
\begin{split}
&
\int_{||z|-\mu r_0|\le \sqrt\mu} \bigl|K\bigl(\frac{|z|}{\mu}\bigr)-1\bigr|
W_{r}^{2^*-2}(z) \frac1{(1+|z-x_1|)^{N-2}}\sum_{j=1}^k \frac1{(1+|z-x_j|)^{\frac{N-2}2+\tau}}\,dz\\
\le & \frac C{\sqrt\mu}\int_{\R^N}
W_{r}^{2^*-2}(z) \frac1{(1+|z-x_1|)^{N-2}}\sum_{j=1}^k \frac1{(1+|z-x_j|)^{\frac{N-2}2+\tau}}\,dz\\
\le & \frac C{\sqrt\mu},
\end{split}
\]
and

\[
\begin{split}
&
\int_{||z|-\mu r_0|\ge \sqrt\mu} \bigl|K\bigl(\frac{|y|}{\mu}\bigr)-1\bigr|
W_{r}^{2^*-2}(z) \frac1{(1+|z-x_1|)^{N-2}}\sum_{j=1}^k \frac1{(1+|z-x_j|)^{\frac{N-2}2+\tau}}\,dz\\
\le & \frac C{\mu^\sigma}\int_{\R^N}
W_{r}^{2^*-2}(z) \frac1{(1+|z-x_1|)^{N-2}}\sum_{j=1}^k \frac1{(1+|z-x_j|)^{\frac{N-2}2+\tau-2\sigma}}\,dz\\
\le & \frac C{\mu^\sigma},
\end{split}
\]
since    if $ ||z|-\mu r_0|\ge \sqrt\mu$, then

\[
||z|-|x_1||\ge  ||z|-\mu r_0| -||x_1| -\mu r_0| \ge
\sqrt\mu-\frac1{\mu^{\bar\theta}}\ge \frac12\sqrt\mu.
\]
Thus,

\[
\begin{split}
&
\int_{\R^N} \bigl|K\bigl(\frac{|z|}{\mu}\bigr)-1\bigr|
W_{r}^{2^*-2}(z) \frac1{(1+|z-x_1|)^{N-2}}\sum_{j=1}^k \frac1{(1+|z-x_j|)^{\frac{N-2}2+\tau}}\,dz\\
\le & \frac C{\mu^\sigma},
\end{split}
\]
which, together with \eqref{g4}, gives

\begin{equation}\label{g5}
\bigl\langle
-\Delta \phi
-(2^*-1) K\bigl(\frac{|z|}{\mu}\bigr)
W_{r_k}^{2^*-2}\phi, Z_{1,l} \bigr\rangle
= \|\phi\|_* O\Bigl(\frac 1{\mu^\sigma}\Bigr).
\end{equation}

But there is a constant $\bar c>0$,

\[
\sum_{i=1}^k \bigl\langle  U_{x_i,\Lambda}^{2^*-2} Z_{i,t}, Z_{1,l} \bigr\rangle=
(\bar c+o(1)) \delta_{tl}.
\]
Thus we obtain from \eqref{g6} that

\[
c_l=O\Bigl(\frac 1{\mu^\sigma}\|\phi\|_{*}+\|h\|_{**} \Bigr).
\]
So,

\begin{equation}\label{g8}
\|\phi\|_{*}\le \Bigl( o(1) + \|h_k\|_{**} +\frac{
\sum_{j=1}^k \frac1{(1+|y-x_j|)^{\frac{N-2}2+\tau+\theta}}}{
\sum_{j=1}^k \frac1{(1+|y-x_j|)^{\frac{N-2}2+\tau}}}\Bigr).
\end{equation}

Since $\|\phi\|_*=1$, we obtain from \eqref{g8} that there is $R>0$,
such that

\begin{equation}\label{g10}
\|\phi(y)\|_{B_R(x_i)}\ge a>0,
\end{equation}
for some $i$. But $\bar \phi(y)=\phi(y-x_i)$ converges uniformly in
any compact set to a solution $u$ of

\begin{equation}\label{g9}
-\Delta u-(2^*-1) U_{0,\Lambda}^{2^*-2} u=0,\quad \text{in}\;\R^N,
\end{equation}
for some $\Lambda\in [L_1,L_2]$, and $u$ is perpendicular to the
kernel of \eqref{g9}. So, $u=0$. This is a contradiction to
\eqref{g10}.

\end{proof}

From Lemma~\ref{l1}, using the same argument as in the proof of
Proposition~4.1 in \cite{DFM}, we can prove the following result :
\begin{proposition}\label{p1}
There exists $k_0>0 $ and a constant $C>0$, independent of $k$, such
that for all $k\ge k_0$ and all $h\in L^{\infty}(\R^N)$, problem
$(\ref{lin})$ has a unique solution $\phi\equiv L_k(h)$. Besides,
\begin{equation}\label{Le}
\Vert L_k(h)\Vert_*\leq C\Vert h\Vert_{**},\qquad
|c_{l}|\leq C\Vert h\Vert_{**}.
\end{equation}

\end{proposition}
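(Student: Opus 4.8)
The plan is to deduce Proposition~\ref{p1} from Lemma~\ref{l1} by the standard Fredholm-alternative argument in the space $H_s$, now carried out in the weighted norms $\|\cdot\|_*$ and $\|\cdot\|_{**}$. First I would recast the linear problem \eqref{lin} in a functional-analytic form: let $E$ be the subspace of $H_s$ consisting of those $\phi$ satisfying the orthogonality conditions $\langle U_{x_i,\Lambda}^{2^*-2}Z_{i,l},\phi\rangle=0$ for all $i=1,\dots,k$, $l=1,2$. Projecting \eqref{lin} onto $E$ (which eliminates the Lagrange multipliers $c_l$), the equation becomes $\phi-\mathcal{K}_k(\phi)=\bar h$, where $\mathcal{K}_k$ is the composition of the operator $\phi\mapsto (-\Delta)^{-1}\bigl((2^*-1)K(|y|/\mu)W_r^{2^*-2}\phi\bigr)$ with the projection onto $E$, and $\bar h$ is the projection of $(-\Delta)^{-1}h$. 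The operator $\mathcal{K}_k$ is compact on $E$ (the potential $W_r^{2^*-2}$ decays and the convolution with the Newtonian kernel gains regularity), so the Fredholm alternative applies: \eqref{lin} is uniquely solvable precisely when the homogeneous problem has only the trivial solution.

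Next I would use Lemma~\ref{l1} to rule out nontrivial solutions of the homogeneous equation for $k$ large. If for some sequence $k\to\infty$ there were $\phi_k\in E$ with $\phi_k=\mathcal{K}_k(\phi_k)$ and $\|\phi_k\|_*=1$, this is exactly the hypothesis of Lemma~\ref{l1} with $h_k=0$, and the conclusion $\|\phi_k\|_*\to 0$ contradicts $\|\phi_k\|_*=1$. Hence there is $k_0$ such that for all $k\ge k_0$ the homogeneous problem has only $\phi=0$, so \eqref{lin} has a unique solution $\phi\equiv L_k(h)$ for every $h$ with $\|h\|_{**}<\infty$ — in particular for every $h\in L^\infty(\R^N)$, since such $h$ has finite $\|\cdot\|_{**}$ norm. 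One should check that $L_k(h)$ indeed lies in $H_s$; this follows because $K(|y|/\mu)$, $W_r$, and the $Z_{i,l}$ are all invariant under the group of symmetries defining $H_s$, so the construction preserves that symmetry (the uniqueness within $H_s$ plus uniqueness in the larger space forces the solution to be the symmetric one).

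The estimate \eqref{Le} is then obtained by a second contradiction argument, again via Lemma~\ref{l1}. Suppose there were $k\to\infty$ and $h_k$ with $\|h_k\|_{**}=1$ (normalize) but $\|L_k(h_k)\|_*\to\infty$; set $\phi_k=L_k(h_k)/\|L_k(h_k)\|_*$ and $\tilde h_k=h_k/\|L_k(h_k)\|_*$, so $\|\tilde h_k\|_{**}\to 0$ and $\phi_k$ solves \eqref{lin} with right-hand side $\tilde h_k$ and $\|\phi_k\|_*=1$. Lemma~\ref{l1} forces $\|\phi_k\|_*\to 0$, a contradiction. This gives $\|L_k(h)\|_*\le C\|h\|_{**}$ uniformly in $k\ge k_0$; the bound $|c_l|\le C\|h\|_{**}$ then drops out of the computation already performed in the proof of Lemma~\ref{l1} — namely $c_l=O(\mu^{-\sigma}\|\phi\|_*+\|h\|_{**})$ combined with $\|\phi\|_*=\|L_k(h)\|_*\le C\|h\|_{**}$ — since $\mu^{-\sigma}\to 0$.

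The one point requiring care — and the place where the argument is genuinely more than bookkeeping — is the compactness of $\mathcal{K}_k$ in the $*$-norm for each fixed $k$, because the $*$-norm is an $L^\infty$-type weighted norm rather than a Hilbert-space norm, so "compact" must be verified directly (e.g. via the decay estimates in Lemmas~\ref{laa1}--\ref{laa3} together with an Arzelà--Ascoli / diagonal argument on the convolution with $|z-y|^{2-N}$), rather than quoted from spectral theory. Since this is precisely the step handled in Proposition~4.1 of \cite{DFM}, I would follow that argument verbatim, noting only that the potential $W_r^{2^*-2}$ and the kernels appearing here have the same qualitative decay, so no new estimate beyond Lemmas~\ref{laa1}--\ref{laa3} and Lemma~\ref{l1} is needed. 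The uniformity of $k_0$ and of the constant $C$ in $k$ is guaranteed by the fact that Lemma~\ref{l1} is itself a statement about $k\to\infty$.
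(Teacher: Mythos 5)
Your argument is correct and is essentially the argument the paper invokes: the paper gives no proof of Proposition~\ref{p1} at all, but simply cites the template from Proposition~4.1 of \cite{DFM}, which is exactly the ``a priori estimate from Lemma~\ref{l1} plus Fredholm alternative, plus contradiction argument for the norm bound, plus reading off $|c_l|$ from the multiplier estimate already in Lemma~\ref{l1}'' scheme you lay out. The contradiction argument for $\|L_k(h)\|_*\le C\|h\|_{**}$, the deduction of $|c_l|\le C\|h\|_{**}$, and the remark that $L^\infty(\R^N)\subset\{\|h\|_{**}<\infty\}$ are all exactly right.

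The one place your route departs slightly from the standard one (and most likely from what \cite{DFM} actually does) is the choice of space for the Fredholm step. You propose establishing compactness of $\mathcal{K}_k$ directly in the weighted $L^\infty$-type space carrying the $*$-norm, which is workable but requires an Arzel\`a--Ascoli / tail-control argument, as you yourself flag. The cleaner and more common route is to run Fredholm in the Hilbert space $D^{1,2}(\R^N)\cap H_s$: there $\phi\mapsto(-\Delta)^{-1}\bigl(W_r^{2^*-2}\phi\bigr)$ is compact because $W_r^{2^*-2}\in L^{N/2}(\R^N)$, and Rellich--Kondrachov plus the smallness of $\|W_r^{2^*-2}\|_{L^{N/2}(\R^N\setminus B_R)}$ for $R$ large do the job with no weighted-norm compactness argument needed. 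One then gets existence in $D^{1,2}$ with the Lagrange multipliers, and the $*$-norm bound is promoted afterwards via Lemma~\ref{l1}. Both routes end in the same place, and since the paper provides no proof, your version is a legitimate filling-in; just be aware the weighted-$L^\infty$ compactness is the heavier of the two options.
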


 Now, we consider

\begin{equation}\label{re}
\begin{cases}
-\Delta \bigl(  W_{r} +\phi \bigr)
=K\bigl(\frac{y}{\mu}\bigr)\bigl( W_{r} +\phi \bigr)^{2^*-1}
+& \sum_{t=1}^2 c_{t}\sum_{i=1}^k U_{x_i,
\Lambda}^{2^*-2}Z_{i,t}, \; \text{in}\;
\R^N,\\
\phi_k\in H_s,\\
<U_{x_i,\Lambda}^{2^*-2}Z_{i,l},\phi_k>=0,  & i=1,\cdots,k,\; l=1,2.
\end{cases}
\end{equation}
We have

\begin{proposition}\label{p1-6-3}
There is an integer $k_0>0$, such that for each $k\ge k_0$, $L_0\le
\Lambda\le L_1$, $|r-\mu r_0|\le \frac1{\mu^{\bar\theta}}$, where
$\bar\theta>0$ is a fixed small constant, \eqref{re} has  a unique
solution $\phi=\phi(r,\Lambda)$, satisfying

\[
\|\phi\|_{*}\le C\bigl(\frac k\mu\bigr)^{\frac {N+2}2-\tau},\qquad |c_t|\le
C\bigl(\frac k\mu\bigr)^{\frac {N+2}2-\tau}.
\]

\end{proposition}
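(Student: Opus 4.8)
The plan is to solve \eqref{re} by a contraction mapping argument using the linear theory of Proposition~\ref{p1}. Writing $u=W_r+\phi$, the nonlinear equation \eqref{re} for $\phi$ in $H_s$ (with the orthogonality constraints) is equivalent to the fixed-point problem
\[
\phi = A(\phi) := L_k\bigl( N(\phi)+l_k\bigr),
\]
where $L_k$ is the bounded inverse of the linearized operator from Proposition~\ref{p1}, the error term is
\[
l_k = K\bigl(\tfrac{|y|}\mu\bigr)W_r^{2^*-1}+\Delta W_r = K\bigl(\tfrac{|y|}\mu\bigr)W_r^{2^*-1}-\sum_{j=1}^k U_{x_j,\Lambda}^{2^*-1},
\]
and $N(\phi)=K\bigl(\tfrac{|y|}\mu\bigr)\bigl[(W_r+\phi)^{2^*-1}-W_r^{2^*-1}-(2^*-1)W_r^{2^*-2}\phi\bigr]$ is the purely nonlinear remainder. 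So the two things to estimate are $\|l_k\|_{**}$ and the Lipschitz behavior of $N$ in the weighted norm.

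First I would estimate the error $\|l_k\|_{**}$. This splits into two contributions: the term coming from $|K(|y|/\mu)-1|$ acting on $W_r^{2^*-1}$, which by condition (K) and the hypothesis $|r-\mu r_0|\le\mu^{-\bar\theta}$ produces (after the kind of region-splitting computation already carried out in \eqref{g4}--\eqref{g5} and in the estimates following it) a gain of a negative power of $\mu$; and the ``interaction'' term $\sum_j U_{x_j,\Lambda}^{2^*-1}-W_r^{2^*-1}$ coming from the fact that the sum of bubbles does not exactly solve the limiting equation, whose size is governed by the nearest-neighbor bubble distance $|x_i-x_{i+1}|\sim \mu/k$. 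Balancing these, and recalling $\mu=k^{(N-2)/(N-2-m)}$, one finds $\|l_k\|_{**}\le C(k/\mu)^{\frac{N+2}2-\tau}$; this is where the precise exponent in the statement comes from, and checking it is essentially bookkeeping with the estimates of Lemma~\ref{laa1}--Lemma~\ref{laa3}.

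Next I would check that $A$ maps the ball $\mathcal{B}=\{\|\phi\|_*\le C_0(k/\mu)^{\frac{N+2}2-\tau}\}$ into itself and is a contraction there. For the nonlinear term, since $2^*-1=\frac{N+2}{N-2}$ may be less than $2$ when $N\ge 5$... actually for $N\ge 6$ one has $2^*-1\le 2$ so $|N(\phi)|\le C|\phi|^{2^*-1}$, while for $N=5$ one has $2^*-1=7/3>2$ and one needs the mixed bound $|N(\phi)|\le C(W_r^{2^*-2}|\phi|^2\wedge |\phi|^{2^*-1})$; in either case, using $\|\phi\|_*\le 1$ and the standard weighted convolution estimates (Lemma~\ref{laa2}), one gets $\|N(\phi)\|_{**}=o(\|\phi\|_*)$ and similarly $\|N(\phi_1)-N(\phi_2)\|_{**}\le o(1)\|\phi_1-\phi_2\|_*$. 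Combined with $\|L_k\|\le C$ this makes $A$ a contraction on $\mathcal{B}$ for $k\ge k_0$, and the Banach fixed point theorem gives a unique $\phi=\phi(r,\Lambda)$ with $\|\phi\|_*\le C(k/\mu)^{\frac{N+2}2-\tau}$; the bound on $|c_t|$ follows from the second inequality in \eqref{Le} applied to $h=N(\phi)+l_k$. The main obstacle is the error estimate $\|l_k\|_{**}$: one must handle the sum of $k$ bubbles carefully so that the constant $C$ does not degenerate as $k\to\infty$ — this is exactly the reason the proof is run in the weighted $L^\infty$ norms rather than in $H^1$, and the delicate point is controlling the overlap of the $k$ slowly-decaying tails $\sum_j(1+|y-x_j|)^{-\frac{N-2}2-\tau}$ uniformly in $k$, which is where the choice $\tau=1+\bar\eta>1$ is used.
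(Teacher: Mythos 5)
Your proposal follows essentially the same route as the paper: rewrite \eqref{re} as the fixed-point problem $\phi=L_k(N(\phi)+l_k)$, estimate $\|l_k\|_{**}$ and $\|N(\phi)\|_{**}$ in the weighted norms (Lemmas~\ref{l-1-5-3} and \ref{l-2-5-3}), and run a contraction argument on a small ball, with the bounds on $\phi$ and $c_t$ then coming from Proposition~\ref{p1}. Two small points to fix up: for $N=5$ the second-order Taylor remainder gives $|N(\phi)|\le C\,W_r^{2^*-3}\phi^2$ rather than $W_r^{2^*-2}\phi^2$; and for $N\ge 6$ the nonlinear estimate carries a $k^{4/(N-2)}$ factor coming from \eqref{1-8-4}, so the claim $\|N(\phi)\|_{**}=o(\|\phi\|_*)$ on the ball is not automatic but must be checked against $\mu=k^{(N-2)/(N-m-2)}$ with $m\in[2,N-2)$ and $\tau$ close to $1$ --- this balancing, not just $\|l_k\|_{**}$, is also where the assumptions bite.
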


Rewrite \eqref{re} as

\begin{equation}\label{re1}
\begin{cases}
-\Delta \phi
-(2^*-1)K\bigl(\frac{|y|}{\mu}\bigr)
 W_{r}^{2^*-2}\phi = & N(\phi)+l_k
+ \sum_{t=1}^2 c_{i}\sum_{i=1}^k  U_{x_i,\Lambda}^{2^*-2}Z_{i,t}, \;
\text{in}\;
\R^N,\\
\phi\in H_s,\\
<U_{x_i,\Lambda}^{2^*-2}Z_{i,l },\phi>=0,  & i=1,\cdots,k,\; l=1,2,
\end{cases}
\end{equation}
where

\[
N(\phi)=K\bigl(\frac{|y|}{\mu}\bigr)\Bigl(\bigl(  W_{r} +\phi \bigr)^{2^*-1}-
 W_{r}^{2^*-1}-(2^*-1) W_{r}^{2^*-2}\phi\Bigr),
\]
and

\[
l_k=K\bigl(\frac{|y|}{\mu}\bigr)  W_{r}^{2^*-1}-\sum_{j=1}^k U_{x_j,\Lambda}^{2^*-1}.
\]

In order to use the contraction mapping theorem to prove that
\eqref{re1} is uniquely solvable  in the set that $\|\phi\|_*$ is
small, we need to estimate $N(\phi)$ and $l_k$.

\begin{lemma}\label{l-1-5-3}

If $N\ge 6$, then

\[
\|N(\phi)\|_{**}\le C k^{\frac{4}{N-2}} \|\phi\|_*^{2^*-1}.
\]
If $N=5$,
\[
\|N(\phi)\|_{**}\le C\|\phi\|_*^2.
\]

\end{lemma}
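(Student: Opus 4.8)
The plan is to estimate the nonlinear term $N(\phi)$ pointwise and then convert that pointwise bound into the weighted $\|\cdot\|_{**}$ norm. By the mean value theorem applied to $t\mapsto t^{2^*-1}$, since $N(\phi) = K(\tfrac{|y|}{\mu})\bigl((W_r+\phi)^{2^*-1} - W_r^{2^*-1} - (2^*-1)W_r^{2^*-2}\phi\bigr)$ and $K$ is bounded, we have a dichotomy depending on the exponent $2^* - 1 = \frac{N+2}{N-2}$. When $N\ge 6$ we have $2^*-1 \le 2$, so the second-order Taylor remainder is controlled by $|\phi|^{2^*-1}$ (a concave power bound: $|(a+b)^p - a^p - pa^{p-1}b| \le C|b|^p$ for $1 < p \le 2$), giving the pointwise estimate
\[
|N(\phi)(y)| \le C|\phi(y)|^{2^*-1}.
\]
When $N = 5$ we have $2^* - 1 = 7/3 > 2$, so the remainder is instead bounded by $C\bigl(W_r^{2^*-3}|\phi|^2 + |\phi|^{2^*-1}\bigr)$; since $\|\phi\|_*$ is small and $W_r$ is bounded near the bubbles, the term $W_r^{2^*-3}|\phi|^2$ dominates, which is why the $N=5$ bound reads $\|N(\phi)\|_{**} \le C\|\phi\|_*^2$ (no extra $k$-power).

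Next I would feed in the definition of $\|\phi\|_*$: pointwise, $|\phi(y)| \le \|\phi\|_* \sum_{j=1}^k (1+|y-x_j|)^{-(\frac{N-2}{2}+\tau)}$. For $N\ge 6$ this gives
\[
|N(\phi)(y)| \le C\|\phi\|_*^{2^*-1}\Bigl(\sum_{j=1}^k \frac{1}{(1+|y-x_j|)^{\frac{N-2}{2}+\tau}}\Bigr)^{2^*-1}.
\]
The crux is then to show
\[
\Bigl(\sum_{j=1}^k \frac{1}{(1+|y-x_j|)^{\frac{N-2}{2}+\tau}}\Bigr)^{2^*-1} \le C k^{\frac{4}{N-2}} \sum_{j=1}^k \frac{1}{(1+|y-x_j|)^{\frac{N+2}{2}+\tau}},
\]
which, after dividing by the right-hand sum, is exactly what is needed to bound $\|N(\phi)\|_{**}$ by $Ck^{4/(N-2)}\|\phi\|_*^{2^*-1}$.

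The main obstacle — and the only genuinely nontrivial step — is this last summation inequality, which encodes how the bound degrades with the number of bubbles $k$. I would handle it by splitting into the region near a given bubble $x_i$ (say $|y - x_i| \le \frac12 \min_{j\ne i}|x_i - x_j|$) and its complement. Near $x_i$, the sum $\sum_j (1+|y-x_j|)^{-(\frac{N-2}{2}+\tau)}$ is dominated by its $i$-th term plus a tail; raising to the power $2^*-1 = 1 + \frac{4}{N-2}$ produces the leading term $(1+|y-x_i|)^{-(\frac{N+2}{2}+\tau)}$ times a factor coming from the remaining $k-1$ bubbles, and using the separation $|x_i - x_j| \ge c\,\mu/k$ (from the configuration $x_j = (r\cos\frac{2(j-1)\pi}{k}, r\sin\frac{2(j-1)\pi}{k},0)$ with $r \sim r_0\mu$) one estimates that tail factor. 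The exponent $\frac{4}{N-2}$ is precisely the power to which a single "extra" bubble sum enters. Away from all bubbles one uses that each ratio $(1+|y-x_j|)^{-(\frac{N-2}{2}+\tau)}/(1+|y-x_i|)^{-(\frac{N+2}{2}+\tau)}$ can be controlled. This is the same type of elementary but bookkeeping-heavy estimate used for the linear terms in \eqref{g1}--\eqref{g3}, so I would model the argument on the proof of Lemma~\ref{laa3}. For $N = 5$ the analogous computation with the mixed term $W_r^{2^*-3}|\phi|^2$ gives $\|\phi\|_*^2$ without the $k$-factor because $2^* - 2 < 1$ there, so the sum does not get raised to a power exceeding $1$ in the dangerous way; I would just verify that $W_r^{2^*-3}$ times the square of the $*$-weight is bounded by the $**$-weight.
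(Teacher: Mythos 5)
Your outline is correct: the pointwise bounds on $N(\phi)$ and the reduction to a summation inequality are exactly right, and for $N=5$ your plan (bound $W_r^{2^*-3}$ against the weight, using the per-sector decomposition and the fact that $k/\mu^{1/3}\le C$) matches the paper. However, for $N\ge 6$ you have identified the crux but then proposed a much harder route than needed. The paper's proof of
\[
\Bigl(\sum_{j=1}^k a_j\Bigr)^{2^*-1}\le C\,k^{\frac4{N-2}}\sum_{j=1}^k a_j^{2^*-1},\qquad a_j=\frac1{(1+|y-x_j|)^{\frac{N-2}2+\tau}},
\]
is a one-line application of Jensen's inequality: for $p>1$, the convexity of $t\mapsto t^p$ gives $\bigl(\tfrac1k\sum a_j\bigr)^p\le\tfrac1k\sum a_j^p$, i.e.\ $\bigl(\sum a_j\bigr)^p\le k^{p-1}\sum a_j^p$; with $p=2^*-1$ one gets $k^{p-1}=k^{4/(N-2)}$ \emph{immediately}, and raising $a_j$ to the power $2^*-1$ already yields $(1+|y-x_j|)^{-(\frac{N+2}2+\frac{N+2}{N-2}\tau)}$, which decays faster than the $**$-weight $(1+|y-x_j|)^{-(\frac{N+2}2+\tau)}$ because $\frac{N+2}{N-2}\tau\ge\tau$. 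No near/far decomposition, no bubble-separation estimate $|x_i-x_j|\gtrsim\mu/k$, and no appeal to Lemma~\ref{laa3} is required. Your proposed region-splitting can be made to work — the worst case for the ratio of the two sums is indeed the ``far-from-all-bubbles'' regime where all $a_j$ are comparable and the ratio is $k^{p-1}$, precisely what Jensen encodes in one stroke — but it would involve substantially more bookkeeping to reach the same constant. One further remark: your more careful $N=5$ pointwise bound $|N(\phi)|\le C\bigl(W_r^{2^*-3}|\phi|^2+|\phi|^{2^*-1}\bigr)$ is actually more rigorous than the paper's bare $CW_r^{1/3}\phi^2$, which implicitly presumes $|\phi|\lesssim W_r$; but your subsequent claim that the quadratic term ``dominates'' deserves to be justified (e.g., by checking that the $|\phi|^{2^*-1}$ contribution to the $**$-norm is negligible relative to $\|\phi\|_*^2$ in the regime where the contraction is performed), rather than just asserted.
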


\begin{proof}

We have

\[
|N(\phi)|\le
\begin{cases}
C|\phi|^{2^*-1}, & N\ge 6;\\
CW_{r}^{\frac13}\phi^2, &N=5.
\end{cases}
\]

Firstly, we consider $N\ge 6$. For any $p>1$, the function $t^p$ is
convex in $t>0$. Thus

\begin{equation}\label{1-8-4}
\bigl(\sum_{j=1}^k \frac{|a_j|}{k}\bigr)^p \leq \sum_{j=1}^k  \frac{|a_j|^p}{k}.
\end{equation}

Using \eqref{1-8-4}, we obtain

\begin{equation}\label{1-l-1-5-3}
\begin{split}
& |N(\phi)|\le  C\|\phi\|_*^{2^*-1}\Bigl(
\sum_{j=1}^k \frac1{(1+|y-x_j|)^{\frac{N-2}2+\tau}}\Bigr)^{2^*-1}\\
\le & C\|\phi\|_*^{2^*-1} k^{\frac{4}{N-2}}  \sum_{j=1}^k \frac1{(1+|y-x_j|)^{\frac{N+2}2+ \frac{N+2}{N-2} \tau}} \\
\le & C\|\phi\|_*^{2^*-1} k^{\frac{4}{N-2}} \sum_{j=1}^k \frac1{(1+|y-x_j|)^{\frac{N+2}2+  \tau}}.
\end{split}
\end{equation}

Thus, the result follows.

It remains to prove the result for $N=5$. We have

\[
|N(\phi)|\le  C\|\phi\|_*^{2}\sum_{i=1}^k \frac1{1+|y-x_i|}
\Bigl(
\sum_{j=1}^k \frac1{1+|y-x_j|)^{\frac{3}2+\tau}}\Bigr)^{2}.
\]

Define

\[
\Omega_j=\bigl\{y: \; y=(y', y'')\in\R^2\times\R^{N-2},
\bigl\langle \frac {y'}{|y'|}, \frac{x_j}{|x_j|}\bigr\rangle\ge \cos\frac{\pi}{k}\bigr\}.
\]
Without loss of generality, we assume $y\in\Omega_1$. Then

\[
|y-x_j|\ge |y-x_1|,\quad j=2,\cdots,k.
\]
So,

\[
\begin{split}
&\sum_{i=2}^k \frac1{1+|y-x_i|}\le \frac1{(1+|y-x_1|)^{\frac12}}\sum_{i=2}^k \frac1{(1+|y-x_i|)^{\frac12}}\\
\le & \frac C{(1+|y-x_1|)^{\frac23}}\sum_{i=2}^k \frac1{|x_1-x_i|^{\frac13}}\\
\le & \frac C{(1+|y-x_1|)^{\frac23}}\frac k{\mu^{\frac13}}\le \frac C{(1+|y-x_1|)^{\frac23}},
\end{split}
\]
since

\[
\frac k{\mu^{\frac13}}\le \frac {Ck}{k^{\frac 3{3-m}\frac13}}\le C.
\]
Similarly

\[
\sum_{j=2}^k \frac1{(1+|y-x_j|)^{\frac{3}2+\tau}}\le \frac C{(1+|y-x_1|)^{\frac32+\tau-\frac13}}.
\]
So, we have proved

\[
\begin{split}
&\sum_{i=1}^k \frac1{1+|y-x_i|}
\Bigl(
\sum_{j=1}^k \frac1{1+|y-x_j|)^{\frac{3}2+\tau}}\Bigr)^{2}\\
\le &
\frac C{(1+|y-x_1|)^{3+2\tau}}\le \frac C{(1+|y-x_1|)^{\frac 72+\tau}},\quad y\in\Omega_1,
\end{split}
\]
since $\tau>1$. Thus,

\[
\|N(\phi)\|_{**}\le C\|\phi\|_*^2.
\]

\end{proof}

Next, we estimate $l_k$.

\begin{lemma}\label{l-2-5-3}
Assume that  $\bigl| |x_1|-\mu r_0\bigr|\le \frac 1{\mu^{\bar
\theta}}$, where $\bar \theta>0$ is a fixed small constant.

 If $N\ge
5$, then

\[
\|l_k\|_{**}\le C \bigl(\frac{k}{\mu}\bigr)^{\frac {N+2}2-\tau}.
\]

\end{lemma}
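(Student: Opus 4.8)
The plan is to estimate $l_k = K\bigl(\tfrac{|y|}\mu\bigr)W_r^{2^*-1} - \sum_{j=1}^k U_{x_j,\Lambda}^{2^*-1}$ by splitting $\R^N$ into $k$ congruent sectors $\Omega_j$ (as already introduced in the proof of Lemma~\ref{l-1-5-3}), and working on $\Omega_1$ by symmetry. On $\Omega_1$ we further split $l_k$ as
\[
l_k = \Bigl(K\bigl(\tfrac{|y|}\mu\bigr)-1\Bigr)W_r^{2^*-1}
+ \Bigl(W_r^{2^*-1} - \sum_{j=1}^k U_{x_j,\Lambda}^{2^*-1}\Bigr),
\]
so that the first piece carries the decay of $K(\tfrac{|y|}\mu)-1$ near $|y|=\mu r_0$ coming from assumption (K), and the second is the purely ``interaction'' error between a sum of bubbles and the sum of their $(2^*-1)$-powers.

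For the interaction term, on $\Omega_1$ one has $|y-x_j|\ge|y-x_1|$ for $j\ge2$, so near $x_1$ the dominant bubble is $U_{x_1,\Lambda}$, and I would write
\[
W_r^{2^*-1} - \sum_j U_{x_j,\Lambda}^{2^*-1}
= \Bigl(U_{x_1,\Lambda}+\textstyle\sum_{j\ge2}U_{x_j,\Lambda}\Bigr)^{2^*-1} - U_{x_1,\Lambda}^{2^*-1} - \sum_{j\ge2}U_{x_j,\Lambda}^{2^*-1},
\]
bounding it by $C\,U_{x_1,\Lambda}^{2^*-2}\sum_{j\ge2}U_{x_j,\Lambda} + C\bigl(\sum_{j\ge2}U_{x_j,\Lambda}\bigr)^{2^*-1}$. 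The key quantitative input is that $\sum_{j\ge 2}U_{x_j,\Lambda}(y)$ is small: using $|x_1-x_j|\sim \tfrac{\mu}{k}|1-j|$ (chord lengths between the $k$ equally spaced points on the circle of radius $r\sim r_0\mu$) together with the convexity trick \eqref{1-8-4} and the decay $U_{x_j,\Lambda}\le C(1+|y-x_j|)^{-(N-2)}$, one gets on $\Omega_1$ a bound of the shape $\sum_{j\ge2}U_{x_j,\Lambda}(y)\le C(\tfrac k\mu)^{N-2}\cdot(\text{something})$, and after feeding this into the $**$-norm (which strips one factor $(1+|y-x_1|)^{-(\frac{N-2}2+\tau)}$ and is measured against $(1+|y-x_1|)^{-(\frac{N+2}2+\tau)}$) the leading contribution comes out as $(\tfrac k\mu)^{\frac{N+2}2-\tau}$. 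One must check the exponent bookkeeping: $\tau=1+\bar\eta$ with $\bar\eta$ small, and $m\in[2,N-2)$ ensures $\tfrac k\mu=k^{1-\frac{N-2}{N-2-m}}=k^{-\frac{m}{N-2-m}}\to0$, so all the ``$(\tfrac k\mu)^{\text{positive}}$'' terms are harmless and the stated power is the slowest-decaying one.

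For the $K$-term, on the region $||y|-\mu r_0|\le\mu\delta$ assumption (K) gives $|K(\tfrac{|y|}\mu)-1|\le C\mu^{-m}\big||y|-\mu r_0\big|^m$, and since near $x_1$ we have $\big||y|-\mu r_0\big|\le |y-x_1| + \big||x_1|-\mu r_0\big|\le |y-x_1|+\mu^{-\bar\theta}$, the factor $\mu^{-m}|y-x_1|^m$ times $W_r^{2^*-1}\sim (1+|y-x_1|)^{-(N+2)}$ plus the far-away decay of $K$ is easily absorbed into $C(\tfrac k\mu)^{\frac{N+2}2-\tau}$ once one notes $m<N-2<\tfrac{N+2}2+\tau$ is false in general — so here I would instead compare $\mu^{-m}$ directly against $(\tfrac k\mu)^{\frac{N+2}2-\tau}$: since $\mu=k^{\frac{N-2}{N-2-m}}$ and $\tfrac k\mu=k^{-\frac m{N-2-m}}$, one has $\mu^{-m}=k^{-\frac{m(N-2)}{N-2-m}}$ which is far smaller than $k^{-\frac{m}{N-2-m}(\frac{N+2}2-\tau)}$ for $k$ large, so this term is strictly lower order. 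The main obstacle, and where the care goes, is the interaction estimate: getting the sums $\sum_{j\ge2}(1+|y-x_j|)^{-a}$ on $\Omega_1$ controlled by a single power of $(1+|y-x_1|)^{-1}$ times $\sum_{j\ge2}|x_1-x_j|^{-b}$ with the right $b$, and then summing the resulting geometric-type series $\sum_{j\ge 2}|1-j|^{-b}$ over the $k$ points — one must make sure the chosen split of exponents keeps $b>1$ (so the $j$-sum converges uniformly in $k$) while leaving enough decay in $|y-x_1|$ to land in the $**$-norm. This is exactly the place where the hypothesis $m\ge2$ (equivalently the lower bound on $\mu/k$... rather the smallness of $k/\mu$) is used, mirroring the $N=5$ computation already carried out in the proof of Lemma~\ref{l-1-5-3}.
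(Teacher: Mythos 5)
Your overall plan coincides with the paper's: decompose $l_k$ into a pure bubble--interaction error and a $K$-deviation error, pass to the Voronoi sector $\Omega_1$ by symmetry, and feed the cross terms through Lemma~\ref{laa1} to produce powers of $\tfrac{k}{\mu}$. The paper's exact split is $l_k=K\cdot\bigl(W_r^{2^*-1}-\sum_jU_{x_j,\Lambda}^{2^*-1}\bigr)+\sum_jU_{x_j,\Lambda}^{2^*-1}\bigl(K-1\bigr)$, which differs from yours only in which factor carries the $K$; since $K$ is bounded this is immaterial. Two spots in your sketch need tightening. First, the intermediate interaction bound is not of size $(\tfrac k\mu)^{N-2}$: on $\Omega_1$ one applies Lemma~\ref{laa1} with the split $\sigma=\tfrac{N-2}2-\tfrac{N-2}{N+2}\tau$ to get $\sum_{j\ge2}(1+|y-x_j|)^{-(N-2)}\lesssim\bigl(\tfrac k\mu\bigr)^{\frac{N-2}{N+2}(\frac{N+2}2-\tau)}(1+|y-x_1|)^{-(\frac{N-2}2+\frac{N-2}{N+2}\tau)}$, which after raising to the $\tfrac{N+2}{N-2}$ power gives the stated $\bigl(\tfrac k\mu\bigr)^{\frac{N+2}2-\tau}$; the requirement $\sigma>1$ (so the $j$-sum converges uniformly in $k$) is exactly where $N\ge5$ and $\tau<2$ come in. Second, in the $K$-term your proposed ``compare $\mu^{-m}$ directly against $(\tfrac k\mu)^{\frac{N+2}2-\tau}$'' is numerically correct (indeed $\mu^{-m}=o\bigl((\tfrac k\mu)^{\frac{N+2}2-\tau}\bigr)$ because $N-2>\tfrac{N+2}2-\tau$), but you cannot simply drop the factor $|y-x_1|^m$: you must still verify pointwise that $\mu^{-m}\,\big||y|-|x_1|\big|^m(1+|y-x_1|)^{-(N+2)}\le C\bigl(\tfrac k\mu\bigr)^{\frac{N+2}2-\tau}(1+|y-x_1|)^{-(\frac{N+2}2+\tau)}$. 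The paper does this by redistributing the factor via $\big||y|-|x_1|\big|^m/\mu^{m-\frac{N+2}2+\tau}\le\big||y|-|x_1|\big|^{\frac{N+2}2-\tau}$ on the set $\big||y|-\mu r_0\big|\le\delta\mu$, which is valid when $m>\tfrac{N+2}2-\tau$; your direct comparison handles the complementary range $m\le\tfrac{N+2}2-\tau$ once you note $|y-x_1|^m/(1+|y-x_1|)^{\frac{N+2}2-\tau}\le1$ there. Either way is fine, but the $|y-x_1|^m$ factor has to be carried through rather than absorbed silently.
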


\begin{proof}

Define

\[
\Omega_j=\bigl\{y: \; y=(y', y'')\in\R^2\times\R^{N-2},
\bigl\langle \frac {y'}{|y'|}, \frac{x_j}{|x_j|}\bigr\rangle\ge \cos\frac{\pi}{k}\bigr\}.
\]

We have

\[
\begin{split}
l_k=& K\bigl(\frac {|y|}\mu\bigr)\Bigl(
W_{r}^{2^*-1}-\sum_{j=1}^k U_{x_j,\Lambda}^{2^*-1}\Bigr)\\
&+\sum_{j=1}^k U_{x_j,\Lambda}^{2^*-1}\Bigl(
K\bigl(\frac {|y|}\mu\bigr)-1\Bigr)\\
=:& J_1+J_2.
\end{split}
\]

From the symmetry, we can assume that $y\in\Omega_1$. Then,

\[
|y-x_j|\ge |y-x_1|,\quad \forall\; y\in\Omega_1.
\]
Thus,

\begin{equation}\label{1-l2-5-3}
|J_1|\le C\frac1{(1+|y-x_1|)^4}\sum_{j=2}^k \frac1{(1+|y-x_j|)^{N-2}}+C\Bigl(
\sum_{j=2}^k \frac1{(1+|y-x_j|)^{N-2}}\Bigr)^{2^*-1}.
\end{equation}

Using Lemma~\ref{laa1}, we obtain

\begin{equation}\label{2-l2-5-3}
\begin{split}
&\frac1{(1+|y-x_1|)^4} \frac1{(1+|y-x_j|)^{N-2}}\\
\le & C\Bigl(\frac1{(1+|y-x_1|)^{\frac{N+2}2+\tau} }+\frac1{(1+|y-x_j|)^{\frac{N+2}2+\tau}}\Bigr)\frac1{|x_j-x_1|^{
\frac{N+2}2-\tau}}\\
\le& C\frac1{(1+|y-x_1|)^{\frac{N+2}2+\tau} }\frac1{|x_j-x_1|^{
\frac{N+2}2-\tau}},\quad j>1.
\end{split}
\end{equation}
Since $\tau<2$, we see $\frac{N+2}2-\tau>\frac{N-2}2>1$. Thus

\begin{equation}\label{3-l2-5-3}
\begin{split}
&\frac1{(1+|y-x_1|)^4} \sum_{j=2}^k \frac1{(1+|y-x_j|)^{N-2}}\\
\le & C\frac1{(1+|y-x_1|)^{\frac{N+2}2+\tau} }\bigl(\frac k\mu\bigr)^{
\frac{N+2}2-\tau}.
\end{split}
\end{equation}

On the other hand, for $y\in\Omega_1$, using Lemma~\ref{laa1} again,

\[
\begin{split}
&\frac1{(1+|y-x_j|)^{N-2}}\le  \frac1{(1+|y-x_1|)^{\frac{N-2}2}}\frac1{(1+|y-x_j|)^{\frac{N-2}2}}\\
\le & \frac C{|x_j-x_1|^{\frac{N-2}2-\frac{N-2}{N+2}\tau}}\Bigl(
\frac1{(1+|y-x_1|)^{\frac{N-2}2+\frac{N-2}{N+2}\tau}}+\frac1{(1+|y-x_j|)^{\frac{N-2}2+\frac{N-2}{N+2}\tau}}\Bigr)\\
\le & \frac C{|x_j-x_1|^{\frac{N-2}2-\frac{N-2}{N+2}\tau}}
\frac 1{(1+|y-x_1|)^{\frac{N-2}2+\frac{N-2}{N+2}\tau}}.
\end{split}
\]

If $N\ge 5$, $\tau=1+\bar\eta$ and $\bar\eta>0$ is small, then
$\frac{N-2}2-\frac{N-2}{N+2}\tau> 1$. Thus

\[
\sum_{j=2}^k \frac1{(1+|y-x_j|)^{N-2}}\le C\bigl(
\frac k\mu\bigr)^{\frac{N-2}2-\frac{N-2}{N+2}\tau} \frac 1{(1+|y-x_1|)^{\frac{N-2}2+\frac{N-2}{N+2}\tau}},
\]
which, gives

\[
\Bigl(\sum_{j=2}^k \frac1{(1+|y-x_j|)^{N-2}}\Bigr)^{2^*-1}\le C\bigl(
\frac k\mu\bigr)^{\frac{N+2}2-\tau} \frac 1{(1+|y-x_1|)^{\frac{N+2}2+\tau}}.
\]
Thus, we have proved

\[
\|J_1\|_{**}\le C\bigl(
\frac k\mu\bigr)^{\frac{N+2}2-\tau} .
\]

Now, we estimate  $J_2$.  For $y\in \Omega_1$, and $j>1$, using
Lemma~\ref{laa1}, we have

\[
\begin{split}
&U_{x_j,\Lambda}^{2^*-1}(y)\le C\frac{1}{(1+|y-x_1|)^{\frac{N+2}2}}\frac{1}{(1+|y-x_j|)^{\frac{N+2}2}}\\
\le &C\frac{1}{(1+|y-x_1|)^{\frac{N+2}2+\tau}}\frac{1}{|x_1-x_j|^{\frac{N+2}2-\tau}},
\end{split}
\]
which implies

\begin{equation}\label{10-l2-5-3}
\begin{split}
&\Bigl|\sum_{j=2}^k \Bigl(
K\bigl(\frac {|y|}\mu\bigr)-1\Bigr)U_{x_j,\Lambda}^{2^*-1}\Bigr|\\
\le & C\frac{1}{(1+|y-x_1|)^{\frac{N+2}2+\tau}}\sum_{j=2}^k\frac{1}{|x_1-x_j|^{\frac{N+2}2-\tau}}\\
\le & C\frac{1}{(1+|y-x_1|)^{\frac{N+2}2+\tau}}\bigl(\frac k\mu\bigr)^{\frac{N+2}2-\tau}.
\end{split}
\end{equation}

For $y\in \Omega_1$ and $||y|-\mu r_0|\ge \delta \mu$, where
$\delta>0$ is a fixed constant, then

\[
||y|-|x_1||\ge ||y|-\mu r_0|-||x_1|-\mu r_0|\ge \frac12 \delta \mu.
\]
As a result,

\begin{equation}\label{11-l2-5-3}
\begin{split}
&\Bigl|U_{x_1,\Lambda}^{2^*-1}\Bigl(
K\bigl(\frac {|y|}\mu\bigr)-1\Bigr)\Bigr|\\
\le & C\frac{1}{(1+|y-x_1|)^{\frac{N+2}2+\tau}}\frac1{\mu^{\frac{N+2}2-\tau}}.
\end{split}
\end{equation}

If  $y\in \Omega_1$ and $||y|-\mu r_0|\le \delta \mu$, then

\[
\begin{split}
&\Bigl|K\bigl(\frac {|y|}\mu\bigr)-1\Bigr|\le C|\frac {|y|}\mu-r_0|^m\\
\le  & \frac{C}{\mu^m} \Bigl((||y|-|x_1||)^m+||x_1|-\mu r_0|)^{m}\Bigr)
\\
\le & \frac{C}{\mu^m} ||y|-|x_1||^m+\frac{C}{\mu^{m+\bar\theta}},
\end{split}
\]
and

\[
||y|-|x_1||\le ||y|-\mu r_0|+|\mu  r_0-|x_1||\le 2\delta \mu.
\]

 But

\[
\begin{split}
&
\frac{ ||y|-|x_1||^m }{\mu^{m}}\frac1{(1+|y-x_1|)^{N+2}}\\
= & \frac1{\mu^{\frac {N+2}2-\tau}}
\frac1{(1+|y-x_1|)^{\frac{N+2}2+\tau}}
\frac{ ||y|-|x_1||^m }{\mu^{ m -\frac {N+2}2+\tau}}\frac1{(1+|y-x_1|)^{\frac{N+2}2-\tau}}\\
\le & \frac C{\mu^{\frac{N+2}2-\tau}}
\frac1{(1+|y-x_1|)^{\frac{N+2}2+\tau}}
\frac{ ||y|-|x_1||^{\frac{N+2}2-\tau} }{(1+|y-x_1|)^{\frac{N+2}2-\tau}}\\
\le & \frac C{\mu^{\frac{N+2}2-\tau}}
\frac1{(1+|y-x_1|)^{\frac{N+2}2+\tau}}.
\end{split}
\]
 Thus, we obtain

\begin{equation}\label{13-l2-5-3}
\begin{split}
&\Bigl|U_{x_1,\Lambda}^{2^*-1}\Bigl(
K\bigl(\frac {|y|}\mu\bigr)-1\Bigr)\Bigr|\\
\le & \frac C{\mu^{\frac{N+2}2-\tau}}
\frac1{(1+|y-x_1|)^{\frac{N+2}2+\tau}},\quad ||y|-\mu r_0|\le \delta \mu.
\end{split}
\end{equation}
 Combining \eqref{10-l2-5-3}, \eqref{11-l2-5-3} and
 \eqref{13-l2-5-3}, we reach

\[
\|J_2\|_{**}\le \frac C{\mu^{\frac{N+2}2-\tau}}+C\bigl(\frac k\mu\bigr)^{\frac{N+2}2-\tau}\le
C\bigl(\frac k\mu\bigr)^{\frac{N+2}2-\tau} .
\]

\end{proof}

Now, we are ready to prove Proposition~\ref{p1-6-3}.

\begin{proof}[Proof of Proposition~\ref{p1-6-3}]

Let us recall that

\[
 \mu =
k^{\frac{N-2}{N-m-2}}.
\]

 Let

\[
E=\bigl\{ u:  u\in C(\R^N), \|u\|_*\le \bigl(\frac k\mu\bigr)^{\frac {N+2}2-\tau
-\eta}, \int_{\R^N} U_{x_i,\Lambda}^{2*-2} Z_{i,l}\phi=0,
\;i=1,\cdots,k,\; l=1,2
\bigr\},
\]
where $\eta>0$ is a fixed small constant.  Then, \eqref{re1} is
equivalent to

\[
\phi= A(\phi)=: L(N(\phi))+L(l_k).
\]
 We will prove that $A$ is a contraction map from $E$ to $E$.

In fact, if $N\ge 6$,

\begin{equation}\label{100-6-3}
\begin{split}
&\|\phi\|_{*} \le C\|N(\phi)\|_{**} +C\|l_k\|_{**}\\
\le & C k^{\frac{4}{N-2}} \|\phi\|_{*}^{2^*-1}
+C\bigl(\frac k\mu\bigr)^{\frac{N+2}2-\tau}\\
\le & C k^{\frac{4}{N-2}} \bigl(\frac k\mu\bigr)^{(\frac{N+2}2-\tau-\eta)\frac{N+2}{N-2}}
+C\bigl(\frac k\mu\bigr)^{\frac{N+2}2-\tau}\\
=& \frac { C k^{\frac{4}{N-2}+\frac{N+2}{N-2}(2-\eta)-\frac{4\tau}{N-2}
}}{\mu^{ \frac{N+2}{N-2}(2-\eta)-\frac{4\tau}{N-2}}}
\bigl(\frac k\mu\bigr)^{\frac{N+2}2-\tau}
\le  \bigl(\frac k\mu\bigr)^{\frac {N+2}2-\tau
-\eta},
\end{split}
\end{equation}
since

\[
\begin{split}
&\frac{N-2}{N-m-2}\Bigl(\frac{N+2}{N-2}(2-\eta)-\frac{4\tau}{N-2}\Bigr)\\
\ge &\frac{N-2}{N-4}\Bigl(\frac{N+2}{N-2}(2-\eta)-\frac{4\tau}{N-2}\Bigr)
>\frac{4}{N-2}+\frac{N+2}{N-2}(2-\eta)-\frac{4\tau}{N-2},
\end{split}
\]
if we take $\eta>0$ is small and $\tau$ is close to 1. Thus, $A$
maps $E$ to $E$.

On the other hand,
\[
\| A(\phi_1)-A_(\phi_2)\|_{*}= \|L(N(\phi_1))-L(N(\phi_2))\|_{*}
\le C \|N(\phi_1)-N(\phi_2)\|_{**}.
\]

If $N\ge 6$, then

\[
|N'(t)|\le C|t|^{2^*-2}.
\]
As a result,

\[
\begin{split}
&|N(\phi_1)-N(\phi_2)|\le C\bigl(|\phi_1|^{2^*-2}+|\phi_2|^{2^*-2}\bigr)|\phi_1-\phi_2|\\
\le & C\bigl(\|\phi_1\|_{*}^{2^*-2}+\|\phi_2\|_*^{2^*-2}\bigr)\|\phi_1-\phi_2\|_*\Bigl(\sum_{j=1}^k
\frac1{(1+|y-x_j|)^{\frac{N-2}2+\tau}}\Bigr)^{2^*-1}
\end{split}
\]

As before, we have

\[
\Bigl(\sum_{j=1}^k
\frac1{(1+|y-x_j|)^{\frac{N-2}2+\tau}}\Bigr)^{2^*-1}\\
\le C  k^{\frac{4}{N-2}}
 \sum_{j=1}^k
\frac1{(1+|y-x_j|)^{\frac{N+2}2+\tau}}.
\]
So,

\[
\begin{split}
&\| A(\phi_1)-A_(\phi_2)\|_{*}\le C
\|N(\phi_1)-N(\phi_2)\|_{**}\\
\le &
C k^{\frac{4}{N-2}}
\bigl(\|\phi_1\|_{*}^{2^*-2}+\|\phi_2\|_*^{2^*-2}\bigr)\|\phi_1-\phi_2\|_*\\
\le& \frac { C k^{\frac{4}{N-2}+\frac{2(N+2)}{N-2}-\frac{4(\tau+\eta)}{N-2}
}}{\mu^{ \frac{2(N+2)}{N-2}-\frac{4(\tau+\eta)}{N-2}}}
\|\phi_1-\phi_2\|_*\le \frac12  \|\phi_1-\phi_2\|_*.
\end{split}
\]
  Thus, $A$ is a contraction map.

The case $N=5$ can be discussed in a similar way.

It follows from the contraction mapping theorem that there is a
unique $\phi\in E$, such that

\[
\phi=A(\phi).
\]
Moreover, it follows from Proposition~\ref{p1} that

\[
\|\phi\|_*\le C\bigl( \frac k{\mu}\bigr)^{\frac{N+2}2-\tau}.
\]

\end{proof}

\section{Proof of Theorem \ref{th11}}

Let

\[
F(r,\Lambda)= I\bigl(  W_r +\phi\bigr),
\]
where
$
r=|x_1|,
$
$\phi$ is the function obtained in Proposition~\ref{p1-6-3}, and

\[
I(u)=\frac12\int_{\R^N} |Du|^2-\frac1{2^*}\int_{\R^N} K\bigl(\frac{|y|}\mu\bigr)|u|^{2^*}.
\]

\begin{proposition}\label{p2-6-3}
We have

\[
\begin{split}
F(r,\Lambda)= & I(  W_r)+O\Bigl(\frac k{\mu^{m+\sigma}}\Bigr)\\
=& k\Bigl( A +\frac{B_1}{\Lambda^{m}\mu^m}
+\frac{B_2}{\Lambda^{m-2}\mu^m}
(\mu r_0 -|x_1|))^2\\
&\quad-\sum_{i=2}^k\frac{ B_3 }{\Lambda^{N-2}|x_1-x_j|^{N-2}}+
O\Bigl(\frac1{\mu^{m+\sigma}}+\frac{1}{\mu^m}
|\mu r_0 -|x_1||^3\Bigr)
\Bigr),
\end{split}
\]
where $\sigma>0$ is a fixed constant, $B_i>0$, $i=1,2,3$, is some
constant.

\end{proposition}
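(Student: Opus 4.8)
The plan is to compute the energy $F(r,\Lambda)=I(W_r+\phi)$ by treating the correction $\phi$ perturbatively. First I would expand
\[
I(W_r+\phi)=I(W_r)+\langle I'(W_r),\phi\rangle+\tfrac12\langle I''(W_r)\phi,\phi\rangle+\text{higher order},
\]
and show that the difference $I(W_r+\phi)-I(W_r)$ is of order $O(k\mu^{-m-\sigma})$. For this, note that $\langle I'(W_r),\phi\rangle=-\int(l_k+N(0))\phi$ up to the Lagrange-multiplier terms, which vanish against $\phi$ because of the orthogonality constraints $\langle U_{x_i,\Lambda}^{2^*-2}Z_{i,l},\phi\rangle=0$; using $\|l_k\|_{**}\le C(k/\mu)^{\frac{N+2}2-\tau}$ from Lemma~\ref{l-2-5-3} and $\|\phi\|_*\le C(k/\mu)^{\frac{N+2}2-\tau}$ from Proposition~\ref{p1-6-3}, together with the fact that $\int(\sum_j(1+|y-x_j|)^{-\frac{N+2}2-\tau})(\sum_j(1+|y-x_j|)^{-\frac{N-2}2-\tau})\le Ck$, the linear and quadratic terms in $\phi$ are $O(k(k/\mu)^{N+2-2\tau})$, which is $o(k\mu^{-m-\sigma})$ since $k/\mu=\mu^{-m/(N-2)}$ and $N+2-2\tau$ is close to $N$, hence much bigger than $m$. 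This reduces everything to computing $I(W_r)$.

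Next I would carry out the expansion of $I(W_r)$ itself. Write $I(W_r)=\tfrac12\int|DW_r|^2-\tfrac1{2^*}\int K(|y|/\mu)W_r^{2^*}$. Using $-\Delta U_{x_j,\Lambda}=U_{x_j,\Lambda}^{2^*-1}$, the gradient term becomes $\sum_{i,j}\int U_{x_i,\Lambda}^{2^*-1}U_{x_j,\Lambda}$; the diagonal terms give $k\int U_{0,\Lambda}^{2^*}=kA_0$ for a universal constant, and the off-diagonal terms give the interaction $\sum_{i\ne j}\int U_{x_i,\Lambda}^{2^*-1}U_{x_j,\Lambda}$, which by the standard asymptotics equals $k\sum_{j=2}^k \frac{B_3'}{\Lambda^{N-2}|x_1-x_j|^{N-2}}(1+o(1))$. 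For the potential term I would split $K(|y|/\mu)=1+(K(|y|/\mu)-1)$ and write $W_r^{2^*}=\sum_j U_{x_j,\Lambda}^{2^*}+2^*\sum_{i\ne j}U_{x_i,\Lambda}^{2^*-1}U_{x_j,\Lambda}+\cdots$; the constant part combined with the gradient term yields the "$A$" constant times $k$ minus half the interaction, and the part involving $K-1$ is the new ingredient. Using hypothesis (K), $K(|y|/\mu)-1=-c_0|\,|y|/\mu-r_0|^m+O(|\,|y|/\mu-r_0|^{m+\theta})$, and concentrating near $x_1$ (by symmetry, $k$ times the contribution near $x_1$), I substitute $|y|\approx |x_1|+$ (component of $y-x_1$ along $x_1/|x_1|$) and Taylor expand: the term linear in $(|x_1|-\mu r_0)$ pairs with the $|y-x_1|^m$-type moment of $U^{2^*}$ to produce $\frac{B_1}{\Lambda^m\mu^m}+\frac{B_2}{\Lambda^{m-2}\mu^m}(\mu r_0-|x_1|)^2$ after completing the square, with the constraint $m\in[2,N-2)$ ensuring the relevant moments $\int U_{0,\Lambda}^{2^*}|z|^m$ converge.

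The error bookkeeping is where the care is needed: I must show that (a) the region $||y|-\mu r_0|\ge\delta\mu$ contributes only $O(k\mu^{-(N+2)/2+\tau})=O(k\mu^{-m-\sigma})$ because $U_{x_1,\Lambda}$ decays like $|y-x_1|^{-(N-2)}$ there and $|y-x_1|\ge\frac12\delta\mu$; (b) the cross terms between $(K-1)$ and the bubble interactions, i.e. $\int(K-1)U_{x_i,\Lambda}^{2^*-1}U_{x_j,\Lambda}$ for $i\ne j$, are of lower order than the kept terms; and (c) the Taylor remainder $O(|\,|y|/\mu-r_0|^{m+\theta})$ integrated against $U_{x_1,\Lambda}^{2^*}$ gives $O(\mu^{-m-\theta})$, i.e. the $\mu^{-m-\sigma}$ with $\sigma=\min(\theta,\dots)$, and that the cubic-in-$(\mu r_0-|x_1|)$ term is genuinely $O(\mu^{-m}|\mu r_0-|x_1||^3)$. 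The main obstacle will be this last estimate (c) combined with showing the subtle cancellation that makes the first-order term in $(|x_1|-\mu r_0)$ vanish (so that only the quadratic $(\mu r_0-|x_1|)^2$ survives): this uses that $\int_{\R^N}U_{0,\Lambda}^{2^*}\,z_1\,dz=0$ by oddness, so the linear term in the displacement along $x_1/|x_1|$ integrates to zero, leaving the quadratic term whose coefficient $B_2$ comes from $\int U_{0,\Lambda}^{2^*}|z|^{m-2}z_1^2$ after Taylor-expanding $|\,|x_1|+t-\mu r_0|^m$ to second order in $t$. Assembling all these pieces, dividing and multiplying by $k$, gives exactly the stated formula for $F(r,\Lambda)$.
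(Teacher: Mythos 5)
Your proposal follows essentially the same two-step route as the paper: first reduce $F(r,\Lambda)$ to $I(W_r)+O(k\mu^{-m-\sigma})$ by bounding the $\phi$-corrections via $\|l_k\|_{**}$, $\|N(\phi)\|_{**}$, $\|\phi\|_*$ from Lemmas~\ref{l-1-5-3}--\ref{l-2-5-3} and Proposition~\ref{p1-6-3}, then expand $I(W_r)$ by splitting $\int|DW_r|^2$ into diagonal and interaction pieces and treating $\int K W_r^{2^*}$ via $K=1+(K-1)$ together with the Taylor expansion of $|\,|y|/\mu-r_0|^m$ and the oddness cancellation of the linear moment; this is precisely what the paper does, with the $I(W_r)$ computation carried out in Appendix~A (Proposition~\ref{pa2}). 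One small remark: you correctly identify that $\langle I'(W_r),\phi\rangle=-\int l_k\phi$ and bound it, whereas the paper's proof begins by asserting $\langle I'(W_r),\phi\rangle=0$, which is not literally true (only $\langle I'(W_r+\phi),\phi\rangle=0$ is, by the orthogonality constraints); since the paper nonetheless retains the term $\int|l_k||\phi|$ in its final error, the resulting bound coincides with yours, so this is a cosmetic issue on the paper's side rather than a difference of method.
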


\begin{proof}
Since

\[
\bigl\langle I'\bigl(  W_r \bigr),\phi\bigr\rangle=0,
\]
there is $t\in (0,1)$ such that

\[
\begin{split}
&F(r,\Lambda)= I(  W_r) +\frac12 D^2I\bigl(  W_r +t \phi\bigr) (\phi,\phi)\\
=& I(  W_r) +\int_{\R^N} \bigl(|D \phi|^2-(2^*-1)K\bigl(\frac {|y|}\mu\bigr)
\bigl( W_r+t \phi\bigr)^{2^*-2}\phi^2\bigr)\\
=& I(  W_r) + (2^*-1)\int_{\R^N}
K\bigl(\frac {|y|}\mu\bigr) \Bigl(\bigl( W_r +t \phi\bigr)^{2^*-2}-W_r^{2^*-2}
\Bigr)\phi^2\\
&+\int_{\R^N}\bigl( N(\phi)+l_k\bigr)\phi\\
=& I(  W_r)+O\Bigl(\int_{\R^N}\bigl(|\phi|^{2^*}+|N(\phi)||\phi|+|l_k||\phi|\bigr)\Bigr).
\end{split}
\]

But

\[
\begin{split}
&\int_{\R^N}\bigl(|N(\phi)||\phi|+|l_k||\phi|\bigr)\\
\le  &C\Bigl(\|N(\phi)\|_{**}+\|l_k\|_{**}\Bigr)\|\phi\|_*
\int_{\R^N} \sum_{j=1}^k\frac1{(1+|y-x_j|)^{\frac{N+2}2+\tau}}\sum_{i=1}^k\frac1{(1+|y-x_i|)^{\frac{N-2}2+\tau}}.
\end{split}
\]
Using Lemma~\ref{laa1}

\[
\begin{split}
&\sum_{j=1}^k\frac1{(1+|y-x_j|)^{\frac{N+2}2+\tau}}\sum_{i=1}^k\frac1{(1+|y-x_i|)^{\frac{N-2}2+\tau}}\\
=&\sum_{j=1}^k\frac1{(1+|y-x_j|)^{N+2\tau}}+\sum_{j=1}^k\sum_{i\ne j}\frac1{(1+|y-x_j|)^{\frac{N+2}2+\tau}}
\frac1{(1+|y-x_i|)^{\frac{N-2}2+\tau}}\\
\le &\sum_{j=1}^k\frac1{(1+|y-x_j|)^{N+2\tau}}+C \sum_{j=1}^k \frac1{(1+|y-x_j|)^{N+\tau}}\sum_{j=2}^k
\frac1{|x_j-x_1|^\tau}\\
\le C & \sum_{j=1}^k\frac1{(1+|y-x_j|)^{N+\tau}},
\end{split}
\]
since  $\tau>1$. Thus, we obtain

\[
\int_{\R^N}\bigl(|N(\phi)||\phi|+|l_k||\phi|\bigr)
\le  C k\Bigl(\|N(\phi)\|_{**}+\|l_k\|_{**}\Bigr)\|\phi\|_*\le Ck \bigl(\frac{k}{\mu}\bigr)^{N+2-2\tau}.
\]

On the other hand,

\[
\int_{\R^N} |\phi|^{2^*}\le C \|\phi\|_*^{2^*}
\int_{\R^N} \Bigl(\sum_{j=1}^k\frac1{(1+|y-x_j|)^{\frac{N-2}2+\tau}}\Bigr)^{2^*}.
\]
But using Lemma~\ref{laa1}, if $y\in\Omega_1$,

\[
\begin{split}
&
\sum_{j=2}^k\frac1{(1+|y-x_j|)^{\frac{N-2}2+\tau}}\\
\le &\sum_{j=2}^k\frac1{(1+|y-x_1|)^{\frac{N-2}4+\frac12\tau}}\frac1{(1+|y-x_j|)^{\frac{N-2}4+\frac12\tau}}\\
\le & C\frac1{(1+|y-x_1|)^{\frac{N-2}2+\frac12\bar\eta }}
\sum_{j=2}^k \frac1{|x_j-x_1|^{ \tau-\frac12\bar\eta}}\le C\frac1{(1+|y-x_1|)^{\frac{N-2}2+\frac12\bar\eta}},
\end{split}
\]
  Thus,

\[
\Bigl(\sum_{j=1}^k\frac1{(1+|y-x_j|)^{\frac{N-2}2+\tau}}\Bigr)^{2^*}\le \frac C{(1+|y-x_1|)^{N+2^*\frac12 \bar\eta}},
\quad y\in\Omega_1.
\]
Thus,

\[
\int_{\R^N} \Bigl(\sum_{j=1}^k\frac1{(1+|y-x_j|)^{\frac{N-2}2+\tau}}\Bigr)^{2^*}\le Ck.
\]
So, we have proved

\[
\int_{\R^N} |\phi|^{2^*}\le C k\|\phi\|_*^{2^*}\le Ck\bigl( \frac{k}{\mu}\bigr)^{2^*(\frac{N+2}2-\tau) }.
\]

\end{proof}

\begin{proposition}\label{p1-7-3}
We have

\[
\begin{split}
&\frac{\partial F(r,\Lambda)}{\partial \Lambda}\\
=& k\Bigl( -\frac{B_1 m }{\Lambda^{m+1}\mu^m}
+\sum_{i=2}^k\frac{ B_3(N-2) }{\Lambda^{N-1}|x_1-x_j|^{N-2}}
+
O\Bigl(\frac1{\mu^{m+\sigma}}+\frac{1}{\mu^m}
|\mu r_0 -|x_1||^2\Bigr)
\Bigr),
\end{split}
\]
where $\sigma>0$ is a fixed constant.

\end{proposition}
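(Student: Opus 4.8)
The plan is to differentiate the identity $F(r,\Lambda)=I(W_r+\phi)$ directly, where $\phi=\phi(r,\Lambda)$ is the function produced by Proposition~\ref{p1-6-3}, which depends on $\Lambda$ in a $C^1$ way (from the contraction argument together with the uniform invertibility in Proposition~\ref{p1}). By the chain rule, and since $W_r+\phi$ solves \eqref{re}, so that $I'(W_r+\phi)=\sum_{t=1}^2 c_t\sum_{i=1}^k U_{x_i,\Lambda}^{2^*-2}Z_{i,t}$,
$$\partial_\Lambda F=\bigl\langle I'(W_r+\phi),\,\partial_\Lambda W_r+\partial_\Lambda\phi\bigr\rangle=\sum_{t=1}^2 c_t\sum_{i=1}^k\bigl\langle U_{x_i,\Lambda}^{2^*-2}Z_{i,t},\,\partial_\Lambda W_r+\partial_\Lambda\phi\bigr\rangle .$$
Since $W_r=\sum_{j=1}^k U_{x_j,\Lambda}$ one has the exact identity $\partial_\Lambda W_r=\sum_{j=1}^k Z_{j,2}$; the $\partial_\Lambda W_r$ piece carries the main terms, while the $\partial_\Lambda\phi$ piece will be an error.

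First I would dispose of the $\partial_\Lambda\phi$ contribution. Differentiating the orthogonality relations $\langle U_{x_i,\Lambda}^{2^*-2}Z_{i,l},\phi\rangle=0$ in $\Lambda$ gives $\langle U_{x_i,\Lambda}^{2^*-2}Z_{i,l},\partial_\Lambda\phi\rangle=-\langle\partial_\Lambda(U_{x_i,\Lambda}^{2^*-2}Z_{i,l}),\phi\rangle$, and since $\partial_\Lambda(U_{x_i,\Lambda}^{2^*-2}Z_{i,l})$ still decays like $(1+|y-x_i|)^{-(N+2)}$ this pairing is $O(\|\phi\|_*)$, so after summation in $i$ it is $O(k\|\phi\|_*)$. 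Using $|c_t|\le C(k/\mu)^{\frac{N+2}2-\tau}$ and $\|\phi\|_*\le C(k/\mu)^{\frac{N+2}2-\tau}$ from Proposition~\ref{p1-6-3}, the whole $\partial_\Lambda\phi$ term is $O\bigl(k(k/\mu)^{N+2-2\tau}\bigr)$, which, since $\tau<2$, is $o\bigl(k/\mu^{m+\sigma}\bigr)$ for $\sigma>0$ small.

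It remains to handle $\langle I'(W_r+\phi),\partial_\Lambda W_r\rangle$, and here I would write $I'(W_r+\phi)=I'(W_r)+\int_0^1 D^2I(W_r+s\phi)\phi\,ds$. The first term is $\langle I'(W_r),\partial_\Lambda W_r\rangle=\partial_\Lambda I(W_r)$, and this produces the explicit terms in the statement: the energy expansion of Proposition~\ref{p2-6-3} holds in the $C^1$ sense in $\Lambda$ (all quantities entering it are explicit in $\Lambda$, and the $\Lambda$–derivative of the remainder is controlled by the same pointwise and integral estimates as in the proofs of Lemmas~\ref{l-1-5-3} and \ref{l-2-5-3} and of Proposition~\ref{p2-6-3}, now applied with $\partial_\Lambda W_r=\sum_j Z_{j,2}$ in place of $W_r$); differentiating it yields $-k\,m B_1\Lambda^{-m-1}\mu^{-m}$ from the $B_1$–term and $k(N-2)\sum_{i\ge2}B_3\Lambda^{-(N-1)}|x_1-x_i|^{-(N-2)}$ from the $B_3$–term, while the derivative of the $B_2$–term and of the former error $O(\mu^{-m}|\mu r_0-|x_1||^3)$ are both of size $O\bigl(\mu^{-m}|\mu r_0-|x_1||^2\bigr)$ (using $|\mu r_0-|x_1||\le\mu^{-\bar\theta}<1$). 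For the second term, $\int_0^1\langle\phi,D^2I(W_r+s\phi)\partial_\Lambda W_r\rangle\,ds$, I would use $-\Delta Z_{j,2}=(2^*-1)U_{x_j,\Lambda}^{2^*-2}Z_{j,2}$ to rewrite
$$D^2I(W_r+s\phi)\,\partial_\Lambda W_r=(2^*-1)\sum_{j=1}^k\Bigl(U_{x_j,\Lambda}^{2^*-2}-K\bigl(\tfrac{|y|}{\mu}\bigr)(W_r+s\phi)^{2^*-2}\Bigr)Z_{j,2},$$
and observe that, splitting the bracket as $\bigl(U_{x_j,\Lambda}^{2^*-2}-W_r^{2^*-2}\bigr)+(1-K)W_r^{2^*-2}+K\bigl(W_r^{2^*-2}-(W_r+s\phi)^{2^*-2}\bigr)$, each piece has a small $\|\cdot\|_{**}$ norm, estimated exactly as $l_k$ and $N(\phi)$ were in Section~2; pairing against $\phi$ and using $|\langle\phi,g\rangle|\le Ck\|\phi\|_*\|g\|_{**}$ (the integral bound from the proof of Proposition~\ref{p2-6-3}) bounds this term by $O(k/\mu^{m+\sigma})$. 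Collecting everything gives the stated formula.

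The step I expect to be the main obstacle is this last batch of estimates: checking, in the weighted $\|\cdot\|_{**}$ norm, that $\|D^2I(W_r+s\phi)\partial_\Lambda W_r\|_{**}$ is small enough that its product with $k\|\phi\|_*$ stays $O(\mu^{-m-\sigma})$, together with upgrading Proposition~\ref{p2-6-3} to a $C^1$ expansion carrying the sharp error $O\bigl(\mu^{-m-\sigma}+\mu^{-m}|\mu r_0-|x_1||^2\bigr)$. These are the same delicate computations as in Section~2, now carried out for the $\Lambda$–derivative; the rest is routine bookkeeping.
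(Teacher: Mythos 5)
Your proposal follows the same architecture as the paper's proof: split $\partial_\Lambda F$ into $\langle I'(W_r+\phi),\partial_\Lambda W_r\rangle$ plus the $\sum_t c_t\langle U^{2^*-2}Z_{i,t},\partial_\Lambda\phi\rangle$ remainder, dispose of the second via the differentiated orthogonality relations and the bounds on $c_t,\|\phi\|_*$, and for the first show $\langle I'(W_r+\phi),\partial_\Lambda W_r\rangle=\partial_\Lambda I(W_r)+O(k/\mu^{m+\sigma})$, then appeal to the energy expansion of $\partial_\Lambda I(W_r)$ (Proposition~\ref{pa3}) for the explicit terms. Your packaging via $I'(W_r+\phi)=I'(W_r)+\int_0^1 D^2I(W_r+s\phi)\phi\,ds$ and the split of the bracket into the cross-term, $(1-K)$, and nonlinear pieces is exactly the paper's term-by-term expansion of $\int K(W_r+\phi)^{2^*-1}\partial_\Lambda W_r$, just written in $D^2I$ form; and your use of $\int D\phi\,D\partial_\Lambda W_r=0$ via $-\Delta Z_{j,2}=(2^*-1)U_{x_j,\Lambda}^{2^*-2}Z_{j,2}$ and the orthogonality is precisely how the paper kills the gradient cross term. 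One caveat worth flagging: the opening display $\partial_\Lambda F=\sum_t c_t\sum_i\langle U_{x_i,\Lambda}^{2^*-2}Z_{i,t},\partial_\Lambda W_r+\partial_\Lambda\phi\rangle$ is formally correct but cannot be estimated using only $|c_t|\le C(k/\mu)^{(N+2)/2-\tau}$, since $(k/\mu)^{(N+2)/2-\tau}\gg\mu^{-m}$ for $\tau>1$; you correctly avoid this by re-expanding $\langle I'(W_r+\phi),\partial_\Lambda W_r\rangle$ directly rather than through $c_t$, but the display as written suggests otherwise. Also, ``Proposition~\ref{p2-6-3} holds in the $C^1$ sense'' is a shortcut; the rigorous version is the separate computation of $\partial_\Lambda I(W_r)$ carried out in Proposition~\ref{pa3}, and one should cite that directly rather than differentiating the error term of the zeroth-order expansion.
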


\begin{proof}
 We have

\[
\begin{split}
&\frac{\partial F(r,\Lambda)}{\partial \Lambda}
= \bigl\langle I'(W_r+\phi), \frac{\partial W_r}{\partial \Lambda}+\frac{\partial \phi}{\partial \Lambda}
\bigr\rangle\\
=&\bigl\langle I'(W_r+\phi), \frac{\partial W_r}{\partial \Lambda}
\bigr\rangle +\sum_{l=1}^2\sum_{i=1}^kc_l\bigl\langle U^{2^*-2}_{x_i,\Lambda} Z_{i,l},
\frac{\partial \phi}{\partial \Lambda}
\bigr\rangle.
\end{split}
\]
But

\[
\bigl\langle U^{2^*-2}_{x_i,\Lambda} Z_{i,l}, \frac{\partial \phi}{\partial \Lambda}
\bigr\rangle=-
\bigl\langle \frac{\partial (U^{2^*-2}_{x_{i},\Lambda} Z_{i,l})}{\partial \Lambda},  \phi
\bigr\rangle
\]
Thus, using Proposition~\ref{p1-6-3},

\[
\begin{split}
&\Bigl|\sum_{i=1}^kc_l\bigl\langle U^{2^*-2}_{x_i,\Lambda} Z_{i,l}, \frac{\partial \phi}{\partial \Lambda}
\bigr\rangle\Bigr|\\
\le &C|c_l|\|\phi\|_* \int_{\R^N} \sum_{i=1}^k\frac1{(1+|y-x_i|)^{N+2}}\sum_{j=1}^k
\frac1{(1+|y-x_j|)^{\frac{N-2}2+\tau}}\\
\le & \frac C{\mu^{m+\sigma}}.
\end{split}
\]

On the other hand,

\[
\int_{\R^N} D (W_r+\phi) D\frac{\partial W_r}{\partial \Lambda}=
\int_{\R^N} D W_r D\frac{\partial W_r}{\partial \Lambda},
\]
and

\[
\begin{split}
&\int_{\R^N} K\bigl(\frac {|y|}\mu\bigr) (W_r+\phi)^{2^*-1}\frac{\partial W_r}{\partial \Lambda}\\
=&\int_{\R^N} K\bigl(\frac {|y|}\mu\bigr) W_r^{2^*-1}\frac{\partial W_r}{\partial \Lambda}+
(2^*-1)\int_{\R^N} K\bigl(\frac {|y|}\mu\bigr) W_r^{2^*-2}\frac{\partial W_r}{\partial \Lambda}\phi+
O\Bigl(\int_{\R^N} |\phi|^{2^*}\Bigr).
\end{split}
\]
Moreover, from $\phi \in E$,

\[
\begin{split}
&\int_{\R^N} K\bigl(\frac {|y|}\mu\bigr) W_r^{2^*-2}\frac{\partial W_r}{\partial \Lambda}\phi\\
=&\int_{\R^N} K\bigl(\frac {|y|}\mu\bigr) \bigl(W_r^{2^*-2}\frac{\partial W_r}{\partial \Lambda}-
\sum_{j=1}^k U_{x_j,\Lambda}^{2^*-2}\frac{\partial U_{x_j,\Lambda} }{\partial \Lambda}\bigr) \phi
+\sum_{j=1}^k \int_{\R^N} \bigl(K\bigl(\frac {|y|}\mu\bigr)-1\bigr)
 U_{x_j,\Lambda}^{2^*-2}\frac{\partial U_{x_j,\Lambda} }{\partial \Lambda} \phi\\
 =& k \int_{\Omega_1} K\bigl(\frac {|y|}\mu\bigr) \bigl(W_r^{2^*-2}\frac{\partial W_r}{\partial \Lambda}-
\sum_{j=1}^k U_{x_j,\Lambda}^{2^*-2}\frac{\partial U_{x_j,\Lambda} }{\partial \Lambda}\bigr) \phi+
k\int_{\R^N} \bigl(K\bigl(\frac {|y|}\mu\bigr)-1\bigr)
 U_{x_1,\Lambda}^{2^*-2}\frac{\partial U_{x_1,\Lambda} }{\partial \Lambda} \phi,
\end{split}
\]

\[
\begin{split}
&\Bigl|\int_{\Omega_1} K\bigl(\frac {|y|}\mu\bigr) \bigl(W_r^{2^*-2}\frac{\partial W_r}{\partial \Lambda}-
\sum_{j=1}^k U_{x_j,\Lambda}^{2^*-2}\frac{\partial U_{x_j,\Lambda} }{\partial \Lambda}\bigr) \phi\Bigr|
\\
\le  & C \int_{\Omega_1}  \Bigl(U_{x_1,\Lambda}^{2^*-2}\sum_{j=2}^k U_{x_j,\Lambda}+
\sum_{j=2}^k U_{x_j,\Lambda}^{2^*-1}\Bigr)|\phi|\\
\le &\frac{C}{\mu^{m+\sigma}},
\end{split}
\]
and

\[
\begin{split}
&\Bigl|\int_{\R^N} \bigl(K\bigl(\frac {|y|}\mu\bigr)-1\bigr)
 U_{x_1,\Lambda}^{2^*-2}\frac{\partial U_{x_1,\Lambda} }{\partial \Lambda} \phi\Bigr|\\
 \le & \Bigl|\int_{||y|-\mu r_0|\le \sqrt\mu} \bigl(K\bigl(\frac {|y|}\mu\bigr)-1\bigr)
 U_{x_1,\Lambda}^{2^*-2}\frac{\partial U_{x_1,\Lambda} }{\partial \Lambda} \phi\Bigr|+
 \Bigl|\int_{||y|-\mu r_0|\ge \sqrt\mu} \bigl(K\bigl(\frac {|y|}\mu\bigr)-1\bigr)
 U_{x_1,\Lambda}^{2^*-2}\frac{\partial U_{x_1,\Lambda} }{\partial \Lambda} \phi\Bigr|\\
\le &\frac{C}{\mu^{m+\sigma}}.
\end{split}
\]
Thus, we have proved

\[
\frac{\partial F(r,\Lambda)}{\partial \Lambda}=\frac{\partial I(W_r)}{\partial \Lambda}+O\Bigl(\frac{1}{\mu^{m+\sigma}}
\Bigr),
\]
and the result follows from Proposition~\ref{pa3}.
\end{proof}

Since

\[
|x_j-x_1|=2|x_1|\sin\frac{(j-1)\pi}k, \quad j=2,\dots, k,
\]
we have

\[
\begin{split}
&\sum_{j=2}^k \frac1{|x_j-x_1|^{N-2}}=\frac1{(2|x_1|)^{N-2}}\sum_{j=2}^k \frac1{(\sin\frac{(j-1)\pi}k)^{N-2}}\\
=&
\begin{cases}
\frac1{(2|x_1|)^{N-2}}\sum_{j=2}^{\frac k2} \frac1{(\sin\frac{(j-1)\pi}k)^{N-2}}
+\frac1{(2|x_1|)^{N-2}},
& \text{if $k$ is even};\\
\frac1{(2|x_1|)^{N-2}}\sum_{j=2}^{[\frac k2]} \frac1{(\sin\frac{(j-1)\pi}k)^{N-2}}, &  \text{if $k$ is old}.
\end{cases}
\end{split}
\]
But

\[
0<c'\le \frac{\sin\frac{(j-1)\pi}k}{\frac{(j-1)\pi}k}\le c'',  \quad j=2,\cdots, [\frac k2].
\]
So, there is a constant $B_4>0$, such that

\[
\sum_{j=2}^k \frac1{|x_j-x_1|^{N-2}}=\frac{B_4k^{N-2}}{|x_1|^{N-2}}+O\Bigl(\frac k{|x_1|^{N-2}}\Bigr).
\]
Thus, we obtain

\[
\begin{split}
F(r,\Lambda)
=& k\Bigl( A +\frac{B_1}{\Lambda^{m}\mu^m}
+\frac{B_2}{\Lambda^{m-2}\mu^m}
(\mu r_0 -r)^2\\
&\quad-\frac{ B_4 k^{N-2}}{\Lambda^{N-2}r^{N-2}}+
O\Bigl(\frac1{\mu^{m+\sigma}}+\frac{1}{\mu^m}
|\mu r_0 -r|^3+\frac k{r^{N-2}}\Bigr)
\Bigr),
\end{split}
\]
and

\[
\begin{split}
&\frac{\partial F(r,\Lambda)}{\partial \Lambda}\\
=& k\Bigl( -\frac{B_1 m }{\Lambda^{m+1}\mu^m}
+\frac{ B_4(N-2) k^{N-2}}{\Lambda^{N-1}r^{N-2}}
+
O\Bigl(\frac1{\mu^{m+\sigma}}+\frac{1}{\mu^m}
|\mu r_0 -r|^2+\frac k{r^{N-2}}\Bigr)
\Bigr).
\end{split}
\]

 Let $\Lambda_0$ be the solution of

\[
-\frac{B_1 m }{\Lambda^{m+1}}
+\frac{ B_4(N-2) }{\Lambda^{N-1}r_0^{N-2}}=0.
\]
Then

\[
\Lambda_0= \Bigl(\frac {B_4(N-2)}{B_1 m r_0^{N-2}}\Bigr)^{\frac1{N-2-m}}.
\]

Define

\[
D=\bigl\{ (r,\Lambda):  r\in [\mu r_0-\frac1{\mu^{\bar\theta}},\mu r_0+\frac1{\mu^{\bar\theta}}],\;\;
\Lambda\in [\Lambda_0 -\frac1{\mu^{\frac32\bar\theta}},\Lambda_0 +\frac1{\mu^{\frac32\bar\theta}}]
\bigr\},
\]
where $\bar\theta>0$ is a small constant.

For any $(r,\Lambda)\in D$, we have

\[
\frac r\mu=r_0+O\bigl(\frac1{\mu^{1+\bar\theta}}\bigr).
\]
Thus,

\[
r^{N-2}=\mu^{N-2} \bigl( r_0^{N-2}+O\Bigl(\frac1{\mu^{1+\bar\theta}}\bigr)\Bigr).
\]
So,

\begin{equation}\label{1-8-3}
\begin{split}
F(r,\Lambda)
=& k\Bigl( A +\bigl(\frac{B_1}{\Lambda^{m}}-\frac{ B_4 }{\Lambda^{N-2}r_0^{N-2}}
\bigr)\frac1{\mu^m}\\
&\qquad+\frac{B_2}{\Lambda^{m-2}\mu^m}
(\mu r_0 -r)^2+
O\Bigl(\frac1{\mu^{m+\sigma}}+\frac{1}{\mu^m}
|\mu r_0 -r|^3+\frac k{\mu^{N-2}}\Bigr)
\Bigr),\quad (r,\Lambda)\in D,
\end{split}
\end{equation}
and

\begin{equation}\label{2-8-3}
\begin{split}
&\frac{\partial F(r,\Lambda)}{\partial \Lambda}\\
=& k\Bigl( \bigl(-\frac{B_1 m }{\Lambda^{m+1}}
+\frac{ B_4(N-2) }{\Lambda^{N-1}r_0^{N-2}}\bigr)\frac1{\mu^m}
+
O\Bigl(\frac1{\mu^{m+\sigma}}+\frac{1}{\mu^m}
|\mu r_0 -r|^2+\frac k{\mu^{N-2}}\Bigr)
\Bigr),\quad (r,\Lambda)\in D.
\end{split}
\end{equation}

Now, we define

\[
\bar F(r,\Lambda)=-F(r,\Lambda), \quad (r,\Lambda)\in D.
\]

Let

\[
\alpha_2= k(-A+\eta),\quad, \alpha_1=k\Bigl( -A -\bigl(\frac{B_1}{\Lambda_0^{m}}-\frac{ B_4 }{\Lambda_0^{N-2}r_0^{N-2}}
\bigr)\frac1{\mu^m}-\frac1{\mu^{m+\frac52\bar\theta}}\Bigr),
\]
where $\eta>0$ is a small constant.

Let

\[
\bar F^\alpha=\bigl\{ (r,\Lambda)\in D,  \bar F(r,\Lambda)\le \alpha\bigr\}.
\]

Consider

\[
\begin{cases}
\frac{ d r}{d t}=- D_r\bar F, &t>0;\\
\frac{ d \Lambda}{d t}=- D_\Lambda\bar F, &t>0;\\
(r,\Lambda)\in F^{\alpha_2}.
\end{cases}
\]
Then

\begin{proposition}\label{p1-8-3}
The flow $(r(t),\Lambda(t))$ does not leave $D$ before it reaches
$F^{\alpha_1}$.

\end{proposition}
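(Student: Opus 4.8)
The plan is to show that on the parts of the boundary of $D$ through which the negative-gradient flow could escape, the value of $\bar F$ already lies strictly below $\alpha_1$, so that the flow is trapped once it enters the sublevel set $\bar F^{\alpha_1}$ (or, before that, remains in $D$ until it reaches it). The boundary $\partial D$ consists of four pieces: the two ``$r$-faces'' $r=\mu r_0\pm\frac1{\mu^{\bar\theta}}$ with $\Lambda\in[\Lambda_0-\frac1{\mu^{3\bar\theta/2}},\Lambda_0+\frac1{\mu^{3\bar\theta/2}}]$, and the two ``$\Lambda$-faces'' $\Lambda=\Lambda_0\pm\frac1{\mu^{3\bar\theta/2}}$ with $r$ in the allowed range. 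First I would treat the $r$-faces using the expansion \eqref{1-8-3}: on these faces $(\mu r_0-r)^2=\mu^{-2\bar\theta}$, and since $B_2>0$ and $\Lambda$ stays bounded, the term $\frac{B_2}{\Lambda^{m-2}\mu^m}(\mu r_0-r)^2=\frac{c}{\mu^{m+2\bar\theta}}(1+o(1))$ is a positive quantity of exact order $\mu^{-m-2\bar\theta}$, which dominates the error terms $O(\mu^{-m-\sigma}+\mu^{-m}|\mu r_0-r|^3+k\mu^{-(N-2)})$ provided $\bar\theta$ is chosen small enough that $2\bar\theta<\sigma$, $2\bar\theta<\bar\theta\cdot 3=3\bar\theta$ for the cubic term (indeed $|\mu r_0-r|^3=\mu^{-3\bar\theta}$), and $k\mu^{-(N-2)}=k^{1-\frac{(N-2)^2}{N-2-m}}\ll\mu^{-m-2\bar\theta}$, which holds for $k$ large since $1-\frac{(N-2)^2}{N-2-m}<-\frac{m+2\bar\theta}{N-2-m}$ for small $\bar\theta$. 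Hence on the $r$-faces $F(r,\Lambda)\ge k(A+(\frac{B_1}{\Lambda_0^m}-\frac{B_4}{\Lambda_0^{N-2}r_0^{N-2}})\mu^{-m}+\frac{c'}{\mu^{m+2\bar\theta}})$, i.e. $\bar F(r,\Lambda)\le k(-A-(\frac{B_1}{\Lambda_0^m}-\frac{B_4}{\Lambda_0^{N-2}r_0^{N-2}})\mu^{-m}-\frac{c'}{\mu^{m+2\bar\theta}})<\alpha_1$ (here one notes $\frac52\bar\theta>2\bar\theta$, so the term $-\frac1{\mu^{m+5\bar\theta/2}}$ in the definition of $\alpha_1$ is smaller in magnitude than the gained $-\frac{c'}{\mu^{m+2\bar\theta}}$). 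Thus the flow cannot exit through the $r$-faces: on them $\bar F<\alpha_1$, so by the time the flow is still in $\bar F^{\alpha_2}\setminus\bar F^{\alpha_1}$ it has not yet touched those faces, while once $\bar F\le\alpha_1$ we have reached $F^{\alpha_1}$ and stop.

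For the $\Lambda$-faces I would use \eqref{2-8-3} instead, which gives $\frac{\partial F}{\partial\Lambda}=\frac k{\mu^m}\big(g'(\Lambda)+O(\mu^{-\sigma}+\mu^{-2\bar\theta}\cdot\text{(bounded)})\big)$ where $g(\Lambda)=\frac{B_1}{\Lambda^m}-\frac{B_4}{\Lambda^{N-2}r_0^{N-2}}$ and $g'(\Lambda_0)=0$, $g''(\Lambda_0)\ne0$ (this is exactly the defining equation of $\Lambda_0$, a nondegenerate critical point of $g$ since $N-2>m$). By Taylor expansion, for $\Lambda=\Lambda_0\pm\frac1{\mu^{3\bar\theta/2}}$ one has $g'(\Lambda)=g''(\Lambda_0)(\Lambda-\Lambda_0)+O(|\Lambda-\Lambda_0|^2)=\pm\frac{g''(\Lambda_0)}{\mu^{3\bar\theta/2}}(1+o(1))$, which has magnitude of exact order $\mu^{-3\bar\theta/2}$, strictly larger than the error $O(\mu^{-\sigma}+\mu^{-2\bar\theta})$ once $\bar\theta$ is small enough that $\frac32\bar\theta<2\bar\theta$ (automatic) and $\frac32\bar\theta<\sigma$. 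Therefore $\frac{\partial F}{\partial\Lambda}$, hence $-D_\Lambda\bar F=\partial_\Lambda F$, is nonzero with a definite sign on each $\Lambda$-face; one checks the sign points \emph{inward}: on $\Lambda=\Lambda_0+\frac1{\mu^{3\bar\theta/2}}$ we get $\partial_\Lambda F$ of sign $\mathrm{sgn}\,g''(\Lambda_0)$, and since $\Lambda_0$ is a strict local extremum of $g$ of the appropriate type (here $g$ behaves so that $\Lambda_0$ is a local max of $-g$, matching the construction), the vector field $(-D_r\bar F,-D_\Lambda\bar F)=(\ldots,\partial_\Lambda F)$ points into $D$. Concretely, writing $h=-g$ (so that $\bar F$'s leading $\Lambda$-dependence is $k h(\Lambda)/\mu^m$ up to sign) and noting $h$ has a strict local \emph{minimum} at $\Lambda_0$, the downhill direction $-\partial_\Lambda\bar F$ on the upper face points toward $\Lambda_0$, i.e. inward, and symmetrically on the lower face. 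So the flow cannot cross the $\Lambda$-faces either.

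Combining the two cases: the flow starting in $\bar F^{\alpha_2}\cap D$ has nowhere on $\partial D$ to escape — it is blocked on the $r$-faces because there $\bar F<\alpha_1$ (so the flow would already have terminated at $F^{\alpha_1}$), and it is blocked on the $\Lambda$-faces because the vector field points strictly inward there. Hence $(r(t),\Lambda(t))$ stays in $D$ until it reaches $F^{\alpha_1}$, which is the assertion. The main obstacle, and the step requiring care, is the bookkeeping of exponents: one must fix $\bar\theta>0$ (and already, from Section~2, $\tau$ close to $1$) small enough that simultaneously $2\bar\theta<\sigma$, $\frac32\bar\theta<\sigma$, the cubic-in-$|\mu r_0-r|$ error $\mu^{-m-3\bar\theta}$ is beaten by the quadratic gain $\mu^{-m-2\bar\theta}$, the combinatorial remainder $k\mu^{-(N-2)}$ is negligible against $\mu^{-m-2\bar\theta}$ for all large $k$ (using $\mu=k^{(N-2)/(N-2-m)}$ and $m<N-2$), and the slack $\frac52\bar\theta$ built into $\alpha_1$ is consistent with the $2\bar\theta$-order gain on the $r$-faces and the $\frac32\bar\theta$-order transversality on the $\Lambda$-faces. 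Once these inequalities are arranged, each estimate above is immediate from \eqref{1-8-3}–\eqref{2-8-3}, and the proposition follows. \epr
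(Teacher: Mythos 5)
Your proof is correct and takes the same approach as the paper: on the two $\Lambda$-faces you use the expansion of $\partial_\Lambda F$ in \eqref{2-8-3} to show the negative gradient points strictly inward (the paper does this by direct sign inspection of $\partial_\Lambda \bar F$, you via the observation that $\Lambda_0$ is a nondegenerate critical point of $g(\Lambda)=B_1\Lambda^{-m}-B_4\Lambda^{-(N-2)}r_0^{-(N-2)}$ with $g''(\Lambda_0)=-B_1 m(N-2-m)\Lambda_0^{-m-2}<0$, equivalent content), and on the two $r$-faces you use \eqref{1-8-3} to show $\bar F<\alpha_1$, which is exactly \eqref{5-8-3}. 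Two minor slips, neither affecting the argument: your displayed comparison $1-\frac{(N-2)^2}{N-2-m}<-\frac{m+2\bar\theta}{N-2-m}$ drops a factor of $N-2$ in the exponent of $\mu^{-m-2\bar\theta}$ expressed in powers of $k$ (the corrected inequality $1-\frac{(N-2)^2}{N-2-m}<-\frac{(m+2\bar\theta)(N-2)}{N-2-m}$ still holds for $\bar\theta$ small, using $m<N-2$ strictly), and the phrase ``$\Lambda_0$ is a local max of $-g$'' should read local \emph{min}, as you in fact state correctly one line later when you write $h=-g$ has a strict local minimum at $\Lambda_0$.
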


\begin{proof}

If $\Lambda=\Lambda_0+\frac1{\mu^{\frac32\bar\theta}}$, noting that
$|r-\mu r_0|\le \frac1{\mu^{\bar\theta}}$, we obtain from
\eqref{2-8-3} that

\[
\frac{\partial \bar F(r,\Lambda)}{\partial \Lambda}
= k\Bigl( c'\frac1{\mu^{m+\frac 32 \bar\theta}}
+
O\Bigl(\frac1{\mu^{m+2 \bar\theta}}\Bigr)
\Bigr)>0.
\]
So, the flow does not leave $D$.

Similarly, if $\Lambda=\Lambda_0-\frac1{\mu^{\frac32\bar\theta}}$,
then  we obtain from \eqref{2-8-3} that

\[
\frac{\partial \bar F(r,\Lambda)}{\partial \Lambda}
= k\Bigl( -c'\frac1{\mu^{m+\frac 32 \bar\theta}}
+
O\Bigl(\frac1{\mu^{m+2 \bar\theta}}\Bigr)
\Bigr)<0.
\]
So, the flow does not leave $D$.

Suppose now  $|r-\mu r_0|=\frac1{\mu^{\bar\theta}}$. Since
$|\Lambda-\Lambda_0|\le \frac1{\mu^{\frac32\bar\theta}}$, we see

\[
\begin{split}
&\frac{B_1}{\Lambda^{m}}-\frac{ B_4 }{\Lambda^{N-2}r_0^{N-2}}
=\frac{B_1}{\Lambda_0^{m}}-\frac{ B_4 }{\Lambda_0^{N-2}r_0^{N-2}}+O\bigl(|\Lambda-\Lambda_0|^2\bigr)\\
=&\frac{B_1}{\Lambda_0^{m}}-\frac{ B_4 }{\Lambda_0^{N-2}r_0^{N-2}}+O\bigl(\frac1{\mu^{3\bar\theta}}\bigr).
\end{split}
\]
So, using \eqref{1-8-3}, we obtain

\begin{equation}\label{5-8-3}
\begin{split}
&\bar F(r,\Lambda)\\
= &k\Bigl(- A -\bigl(\frac{B_1}{\Lambda_0^{m}}-\frac{ B_4 }{\Lambda_0^{N-2}r_0^{N-2}}
\bigr)\frac1{\mu^m}
-\frac{B_2}{\Lambda_0^{m-2}\mu^m}
(\mu r_0 -r)^2+
O\Bigl(\frac1{\mu^{m+3\bar\theta}}
\Bigr)\Bigr)\\
\le &k\Bigl(- A -\bigl(\frac{B_1}{\Lambda_0^{m}}-\frac{ B_4 }{\Lambda_0^{N-2}r_0^{N-2}}
\bigr)\frac1{\mu^m}
-\frac{B_2}{\Lambda_0^{m-2}\mu^{m+2\bar\theta}}
+
O\Bigl(\frac1{\mu^{m+3\bar\theta}}
\Bigr)\Bigr)<\alpha_1.
\end{split}
\end{equation}

\end{proof}

\begin{proof}[Proof of Theorem \ref{th11}]

We will prove that $\bar F$, and thus $F$, has a critical point in
$D$.

 Define

\[
\begin{split}
\Gamma=\bigl\{ h: &h (r,\Lambda)=(h_1 (r,\Lambda), h_2 (r,\Lambda))\in D, (r,\Lambda)\in D\\
&h (r,\Lambda)= (r,\Lambda), \;\text{if}\;  |r-\mu r_0|=\frac1{\mu^{\bar\theta}}\bigr\}.
\end{split}
\]

Let

\[
c=\inf_{h\in\Gamma}\max_{(r,\Lambda)\in D} \bar F(h(r,\Lambda)).
\]
We claim that $c$ is a critical value of $\bar F$. To prove this, we
need to prove

\begin{itemize}

\item[(i)]  $\alpha_1 < c<\alpha_2$;

\item[(ii)]  $\sup_{|r-\mu r_0|=\frac1{\mu^{\bar\theta}}} \bar
F(h(r,\Lambda))<\alpha_1,\;\forall\; h\in \Gamma.$

\end{itemize}

To prove (ii),  let $h\in \Gamma$. Then for any  $\bar r$ with
$|\bar r-\mu r_0|=\frac1{\mu^{\bar\theta}}$, we have $h(\bar
r,\Lambda)= (\bar r,\tilde \Lambda)$ for some $\tilde \Lambda$.
Thus, by \eqref{5-8-3},

\[
\bar F(h(r,\Lambda))= \bar F(\bar r,\tilde \Lambda)<\alpha_1.
\]

Now we prove (i).  It is easy to see that

\[
 c<\alpha_2.
 \]

For any $h=(h_1,h_2)\in \Gamma$. Then $h_1(r,\Lambda)= r$, if
$|r-\mu r_0|=\frac1{\mu^{\bar\theta}}$. Define

\[
\tilde h_1(r)=h_1(r,\Lambda_0).
\]
Then  $\tilde h_1(r)= r$, if $|r-\mu r_0|=\frac1{\mu^{\bar\theta}}$.
So, there is a $\bar r\in (\mu r_0-\frac1{\mu^{\bar\theta}}, \mu
r_0+\frac1{\mu^{\bar\theta}})$, such that

\[
\tilde h_1(\bar r)=\mu r_0.
\]
Let $\bar \Lambda= h_2(\bar r,\Lambda_0)$. Then from \eqref{1-8-3}

\[
\begin{split}
&\max_{(r,\Lambda)\in D}\bar F(h(r,\Lambda))\ge \bar F(h(\bar r,\Lambda_0))=
\bar F(\mu r_0,\bar \Lambda)\\
=& k\Bigl(- A -\bigl(\frac{B_1}{\bar \Lambda^{m}}-\frac{ B_4 }{\bar\Lambda^{N-2}r_0^{N-2}}
\bigr)\frac1{\mu^m}
+
O\Bigl(\frac1{\mu^{m+\sigma}}+\frac k{\mu^{N-2}}\Bigr)
\Bigr)\\
=& k\Bigl(- A -\bigl(\frac{B_1}{\Lambda_0^{m}}-\frac{ B_4 }{\Lambda_0^{N-2}r_0^{N-2}}
\bigr)\frac1{\mu^m}
+
O\Bigl(\frac1{\mu^{m+3\bar\theta}}\Bigr)
\Bigr)>\alpha_1.
\end{split}
\]

\end{proof}

\appendix

\section{Energy Expansion}

In all of the appendixes, we always assume that

\[
x_j=\bigl(r \cos\frac{2(j-1)\pi}k, r\sin\frac{2(j-1)\pi}k,0\bigr),\quad j=1,\cdots,k,
\]
where $0$ is the zero vector in $\R^{N-2}$, and $r\in [r_0 \mu
-\frac1{\mu^{\bar\theta}},r_0 \mu+\frac1{\mu^{\bar\theta}}]$ for
some small $\bar\theta>0$.

Let recall that

\[
\mu = k^{\frac{N-2}{N-2-m}},
\]

\[
I(u)=\frac12\int_{\R^N} |Du|^2-\frac1{2^*}\int_{\R^N} K\bigl(\frac{|y|}\mu\bigr)|u|^{2^*},
\]

\[
U_{x_j,\Lambda}(y)=\bigl(N(N-2)\bigr)^{\frac{N-2}4}
\frac{\Lambda^{\frac{N-2}2}}{(1+\Lambda^2|y-x_j|^2)^{\frac{N-2}2}},
\]
and

\[
W_r(y)=\bigl(N(N-2)\bigr)^{\frac{N-2}4}\sum_{j=1}^k \frac{\Lambda^{\frac{N-2}2}}{(1+\Lambda^2|y-x_j|^2)^{\frac{N-2}2}}.
\]

In this section, we will calculate $I(W_r)$.

\begin{proposition}\label{pa2}
 We have

\[
\begin{split}
I(W_r)=& k\Bigl( A +\frac{B_1}{\Lambda^{m}\mu^m}
+\frac{B_2}{\Lambda^{m-2}\mu^m}
(\mu r_0 -r))^2\\
&\quad-\sum_{i=2}^k\frac{ B_3 }{\Lambda^{N-2}|x_1-x_j|^{N-2}}+
O\Bigl(\frac1{\mu^{m+\sigma}}+\frac{1}{\mu^m}
|\mu r_0 -r|^3\Bigr)
\Bigr),
\end{split}
\]
where $B_i$, $i=1,2,3$, is some positive constant,  $A>0$ is a
constant, and $r=|x_1|$.

\end{proposition}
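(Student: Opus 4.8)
The plan is to expand $I(W_r)$ by the standard bubble-interaction method. Write $W_r=\sum_{j=1}^k U_{x_j,\Lambda}$ and use the symmetry of the configuration: all the bubbles play the same role, so
\[
\int_{\R^N}|DW_r|^2 = k\int_{\R^N} DU_{x_1,\Lambda}\cdot D W_r
= k\sum_{j=1}^k \int_{\R^N} U_{x_1,\Lambda}^{2^*-1}U_{x_j,\Lambda},
\]
using $-\Delta U_{x_1,\Lambda}=U_{x_1,\Lambda}^{2^*-1}$. Split off the diagonal term $\int U_{x_1,\Lambda}^{2^*}=:\|U\|^{2^*}_{2^*}$ (a fixed constant, independent of $k$, $r$, $\Lambda$) and handle the off-diagonal terms $\int U_{x_1,\Lambda}^{2^*-1}U_{x_j,\Lambda}$ for $j\ge 2$ by the classical estimate: for two bubbles at distance $d=|x_1-x_j|\to\infty$ one has $\int U_{x_1,\Lambda}^{2^*-1}U_{x_j,\Lambda}=\frac{c_N}{(\Lambda d)^{N-2}}\bigl(1+o(1)\bigr)$ for some explicit $c_N>0$. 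Summing over $j\ge 2$ gives the term $-\sum_{i\ge 2}B_3\Lambda^{-(N-2)}|x_1-x_j|^{-(N-2)}$ in the statement (the sign flips because this interaction term is subtracted once we also account for the potential term below).

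For the potential term $\frac1{2^*}\int_{\R^N} K(|y|/\mu)W_r^{2^*}$, again use symmetry to reduce to $k$ times the integral over $\Omega_1$ (the angular sector around $x_1$ defined in the body of the paper). On $\Omega_1$ write $W_r^{2^*}=U_{x_1,\Lambda}^{2^*}+2^*U_{x_1,\Lambda}^{2^*-1}\sum_{j\ge 2}U_{x_j,\Lambda}+(\text{lower order})$, where the lower-order remainder is controlled exactly as in the proof of Lemma~\ref{l-2-5-3} (the $J_1$-type estimates there already show $\bigl(\sum_{j\ge2}U_{x_j,\Lambda}\bigr)^{2^*-1}$-type terms are $O((k/\mu)^{\frac{N+2}2-\tau})$, hence negligible after integration). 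The main new contribution comes from the factor $K(|y|/\mu)-1$ against $U_{x_1,\Lambda}^{2^*}$: using hypothesis (K), $K(|y|/\mu)-1 = -c_0\bigl|\,|y|/\mu-r_0\,\bigr|^m + O(\bigl|\,|y|/\mu-r_0\,\bigr|^{m+\theta})$ on the region $||y|-\mu r_0|\le\delta\mu$, while outside this region the contribution is exponentially (polynomially in $\mu$) small because $U_{x_1,\Lambda}$ decays. Expanding $|y|$ around $|x_1|=r$ and then around $\mu r_0$, and rescaling $y=x_1+z/\Lambda$, one gets
\[
\int_{\R^N}\bigl(K(|y|/\mu)-1\bigr)U_{x_1,\Lambda}^{2^*}\,dy
= -\frac{c_0}{\Lambda^m\mu^m}\int_{\R^N}|z_1|^m U_{0,1}^{2^*}\,dz
- \frac{c_0'}{\Lambda^{m-2}\mu^m}(\mu r_0-r)^2 + O\!\Bigl(\tfrac1{\mu^{m+\sigma}}+\tfrac{|\mu r_0-r|^3}{\mu^m}\Bigr),
\]
where the quadratic term in $(\mu r_0-r)$ arises from the second-order Taylor expansion of $\bigl|\,|y|/\mu-r_0\,\bigr|^m$ in the radial variable near $r=\mu r_0$ (the first-order term integrates to zero by the evenness of $U_{0,1}$), and the constant-radius leading term produces $B_1/(\Lambda^m\mu^m)$. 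Here $m\in[2,N-2)$ guarantees $\int|z|^mU_{0,1}^{2^*}\,dz<\infty$, and the $O$-terms absorb the cross terms $K(|y|/\mu)$ against the off-diagonal bubble products. Collecting the constant $A:=\frac12\|U\|_{2^*}^{2^*}-\frac1{2^*}\|U\|_{2^*}^{2^*}=\frac1N\|U\|_{2^*}^{2^*}>0$, the interaction term, and the curvature terms, and multiplying by the overall factor $k$, yields the claimed formula.

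The main obstacle is bookkeeping the error terms uniformly in $k$: since the number of bubbles grows, one must check that $\sum_{j\ge2}|x_1-x_j|^{-(N-2)}$ and the various sums of cross terms over $\Omega_1$ are genuinely of the stated order. This is exactly where the choice $\mu=k^{(N-2)/(N-2-m)}$ enters — it makes $(k/\mu)^{N-2}$ comparable to $\mu^{-m}$, so the bubble-interaction sum $\sum_{j\ge 2}|x_j-x_1|^{-(N-2)}\sim B_4 k^{N-2}/r^{N-2}\sim B_4 k^{N-2}/(\mu r_0)^{N-2}$ balances the curvature term $B_1/(\Lambda^m\mu^m)$, both being $O(\mu^{-m})$, while the remainders $O(\mu^{-m-\sigma})$ are of strictly lower order. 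I would verify these sums using the elementary bound $c'\le \frac{\sin((j-1)\pi/k)}{(j-1)\pi/k}\le c''$ for $2\le j\le [k/2]$, which is already used later in Section~3; the convergence of $\sum_{j\ge 2}(j-1)^{-(N-2)}$ for $N\ge5$ (equivalently $N-2>1$) ensures the sum is $O(k^{N-2})$ with a finite positive constant, and that $k^{N-2}/\mu^{N-2}=k^{-m(N-2)/(N-2-m)}\cdot k^{N-2}\cdot\mu^{m-N+2}\cdot\mu^{-m}\cdot\mu^{m}$ simplifies correctly. The pointwise estimates needed for the remainder (Lemma~\ref{laa1}-type splitting of products of bubbles) are precisely the ones already invoked in Lemmas~\ref{l-1-5-3} and~\ref{l-2-5-3}, so those can be quoted rather than redone.
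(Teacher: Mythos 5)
Your proposal is correct and follows essentially the same route as the paper's own proof: expand the Dirichlet energy via $-\Delta U_{x_1,\Lambda}=U_{x_1,\Lambda}^{2^*-1}$ and the standard two-bubble interaction asymptotics; reduce the potential term by symmetry to $k\int_{\Omega_1}$, split $W_r^{2^*}$ into the single-bubble term, the cross term, and a remainder controlled by Lemma~\ref{laa1}; and Taylor-expand $||y|-\mu r_0|^m$ about the bubble center so that the constant-radius piece gives $B_1/(\Lambda^m\mu^m)$, the first-order piece vanishes by evenness, and the second-order piece gives $B_2(\mu r_0-r)^2/(\Lambda^{m-2}\mu^m)$. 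Your identification of $A=\tfrac1N\|U_{0,1}\|_{2^*}^{2^*}$ and of the scaling balance forced by $\mu=k^{(N-2)/(N-2-m)}$ matches the paper.
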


\begin{proof}

By using the symmetry, we have

\[
\begin{split}
&\int_{\R^N } |D W_r|^2= \sum_{j=1}^k \sum_{i=1}^k \int_{\R^N } U_{x_j,\Lambda}^{2^*-1}U_{x_i,\Lambda}\\
=& k \Bigl(\int_{\R^N} U^{2^*}_{0,1} +\sum_{i=2}^k \int_{\R^N } U_{x_1,\Lambda}^{2^*-1}U_{x_i,\Lambda}\Bigr)\\
=& k \Bigl(\int_{\R^N} U^{2^*}_{0,1} +\sum_{i=2}^k\frac{ B_0 }{\Lambda^{N-2}
|x_1-x_j|^{N-2}}+O\Bigl(
\sum_{i=2}^k\frac{ 1 }{|x_1-x_j|^{N-2+\sigma}}\Bigr)\Bigr).
\end{split}
\]

Let

\[
\Omega_{j}=\bigl\{ y:\; y=(y',y'')=\R^2\times \R^{N-2},\;
  \bigl\langle \frac {y'}{|y'|}, \frac{x_j}{|x_j|}\bigr\rangle \ge \cos\frac {\pi}k\bigr\}.
\]
Then,

\[
\begin{split}
&
\int_{\R^N } K\bigl(\frac {|y|}\mu\bigr) |W_r|^{2^*}= k\int_{\Omega_1}K\bigl(\frac {|y|}\mu\bigr) |W_r|^{2^*}\\
=& k\Bigl(\int_{\Omega_1}K\bigl(\frac {|y|}\mu\bigr) U_{x_1,\Lambda}^{2^*}-2^*
\int_{\Omega_1}K\bigl(\frac {|y|}\mu\bigr)
\sum_{i=2}^k U_{x_1,\Lambda}^{2^*-1}U_{x_i,\Lambda}\\
&\qquad+
O\Bigl(
\int_{\Omega_1} U_{x_1,\Lambda}^{2^*/2}\bigl(
\sum_{i=2}^k U_{x_i,\Lambda}\bigr)^{2^*/2}\Bigr)\Bigr).
\end{split}
\]

Note that for $y\in \Omega_1$, $|y-x_i|\ge |y-x_1|$. Using
Lemma~\ref{laa1}, we find

\[
\begin{split}
&
\sum_{i=2}^k U_{x_i,\Lambda}\le C\sum_{i=2}^k\frac1{(1+|y-x_1|)^{\frac{N-2}2}}\frac1{(1+|y-x_i|)^{\frac{N-2}2}}\\
\le & \frac1{(1+|y-x_1|)^{N-2-\alpha}}\sum_{i=2}^k \frac1{|x_i-x_1|^\alpha}.
\end{split}
\]
If we take  the constant $\alpha$ with  $\max (1,\frac
{(N-2)^2}N)<\alpha <N-2$, then

\[
\int_{\Omega_1} U_{x_1,\Lambda}^{2^*/2}\bigl(
\sum_{i=2}^k U_{x_i,\Lambda}\bigr)^{2^*/2}= O\Bigl( \bigl(\frac k \mu\bigr)^{N-2+\sigma}\Bigr).
\]

On the other hand, it is easy to show

\[
\begin{split}
&\int_{\Omega_1}K\bigl(\frac {|y|}\mu\bigr)
\sum_{i=2}^k U_{x_1,\Lambda}^{2^*-1}U_{x_i,\Lambda}\\
=& \int_{\Omega_1}
\sum_{i=2}^k U_{x_1,\Lambda}^{2^*-1}U_{x_i,\Lambda}
+\int_{\Omega_1}\Bigl(K\bigl(\frac {|y|}\mu\bigr)-1\Bigr)
\sum_{i=2}^k U_{x_1,\Lambda}^{2^*-1}U_{x_i,\Lambda}\\
=&\sum_{i=2}^k\frac{ B_0 }{\Lambda^{N-2}|x_1-x_j|^{N-2}}+O\Bigl( \bigl(\frac k \mu\bigr)^{N-2+\sigma}\Bigr).
\end{split}
\]

Finally,

\[
\begin{split}
&\int_{\Omega_1}K\bigl(\frac {|y|}\mu\bigr) U_{x_1,\Lambda}^{2^*}\\
=&\int_{\R^N}  U_{0,1}^{2^*}-\frac{c_0 }{\mu^m}\int_{\Omega_1}||y|-\mu r_0|^mU_{x_1,\Lambda}^{2^*}\\
&+
O\Bigl(\mu^{-m-\theta}\int_{\Omega_1}||y|-\mu r_0|^{m+\theta}U_{x_1,\Lambda}^{2^*}\Bigr)\\
=&\int_{\R^N}  U_{0,1}^{2^*}-\frac{c_0 }{\mu^m}\int_{\Omega_1}||y|-\mu r_0|^mU_{x_1,\Lambda}^{2^*}
+
O\Bigl(\frac1{\mu^{m+\theta}}\Bigr)\\
=&\int_{\R^N}  U_{0,1}^{2^*}-\frac{c_0 }{\mu^m}\int_{\R^N}||y-x_1|-\mu r_0|^mU_{0,\Lambda}^{2^*}
+
O\Bigl(\frac1{\mu^{m+\theta}}\Bigr).
\end{split}
\]
But

\[
\begin{split}
&
||y-x_1|-\mu r_0|^m= | |x_1|-y_1 +O(\frac1{|x_1|})-\mu r_0|^m\\
=& |y_1|^m + m| y_1|^{m-2} y_1 (\mu r_0 -|x_1|+O(\frac1{|x_1|})) \\
&+ \frac12 m(m-1) | y_1|^{m-2}
(\mu r_0 -|x_1|+O(\frac1{|x_1|}))^2+ O\Bigl( (\mu r_0 -|x_1|+O(\frac1{|x_1|}))^{2+\sigma}\Bigr)
\end{split}
\]
Thus,

\[
\begin{split}
&\int_{\R^N}||y-x_1|-\mu r_0|^mU_{0,\Lambda}^{2^*}\\
=&\int_{\R^N}|y_1|^mU_{0,\Lambda}^{2^*}+\frac12 m(m-1) \int_{\R^N}|y_1|^{m-2}U_{0,\Lambda}^{2^*}
(\mu r_0 -|x_1|))^2\\
&+O\Bigl(|\mu r_0 -|x_1||^{2+\sigma}\Bigr).
\end{split}
\]
Thus, we have proved

\
\[
\begin{split}
&
\int_{\R^N } K\bigl(\frac {|y|}\mu\bigr) |W_r|^{2^*}\\
=& k\Bigl(\int_{\R^N}  |U_{0,1}|^{2^*} -\frac{c_0}{\Lambda^{m}\mu^m}
\int_{\R^N}|y_1|^mU_{0,1}^{2^*}\\
&\quad-\frac{c_0}{\Lambda^{m-2}\mu^m}
\frac12 m(m-1) \int_{\R^N}|y_1|^{m-2}U_{0,1}^{2^*}
(\mu r_0 -|x_1|))^2\\
&\quad+2^*\sum_{i=2}^k\frac{ B_0 }{\Lambda^{N-2}|x_1-x_j|^{N-2}}+
O\Bigl(\frac1{\mu^{m+\sigma}}\Bigr)
\Bigr).
\end{split}
\]

\end{proof}

We also need to calculate  $ \frac{\partial I(W_r)}{\partial
\Lambda}$.

\begin{proposition}\label{pa3}
 We have

\[
\begin{split}
\frac{\partial I(W_r)}{\partial
\Lambda}=& k\Bigl( -\frac{m B_1}{\Lambda^{m+1}\mu^m}
+\sum_{i=2}^k\frac{ B_3 (N-2) }{\Lambda^{N-1}|x_1-x_j|^{N-2}}\\
&\qquad+
O\Bigl(\frac1{\mu^{m+\sigma}}+\frac{1}{\mu^m} |\mu r_0
-|x_1||^2\Bigr) \Bigr),
\end{split}
\]
where $B_i$, $i=1,2,3$, is same positive constant in
Proposition~\ref{pa2}

\end{proposition}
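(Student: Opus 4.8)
The plan is to obtain $\partial_\Lambda I(W_r)$ by rerunning the energy computation of Proposition~\ref{pa2}, this time with the last copy of $W_r$ replaced by $\partial_\Lambda W_r=\sum_{j=1}^k\partial_\Lambda U_{x_j,\Lambda}$. Since $-\Delta W_r=\sum_{j=1}^k U_{x_j,\Lambda}^{2^*-1}$ and $W_r>0$, the chain rule gives
\[
\frac{\partial I(W_r)}{\partial\Lambda}=\bigl\langle I'(W_r),\partial_\Lambda W_r\bigr\rangle=\mathrm{I}+\mathrm{II},
\]
where
\[
\mathrm{I}=\int_{\R^N}\Bigl(\sum_{j=1}^k U_{x_j,\Lambda}^{2^*-1}-W_r^{2^*-1}\Bigr)\partial_\Lambda W_r=\partial_\Lambda\Bigl(\tfrac12\int_{\R^N}|DW_r|^2-\tfrac1{2^*}\int_{\R^N}W_r^{2^*}\Bigr),
\]
\[
\mathrm{II}=\int_{\R^N}\bigl(1-K(\tfrac{|y|}\mu)\bigr)W_r^{2^*-1}\,\partial_\Lambda W_r=\partial_\Lambda\Bigl(\tfrac1{2^*}\int_{\R^N}\bigl(1-K(\tfrac{|y|}\mu)\bigr)W_r^{2^*}\Bigr).
\]
Thus each of $\mathrm{I},\mathrm{II}$ is the $\Lambda$-derivative of a quantity already expanded in the proof of Proposition~\ref{pa2}: for $\mathrm{I}$ the curvature-free $k$-bubble energy, for $\mathrm{II}$ the curvature correction. (This is the same reduction used in the proof of Proposition~\ref{p1-7-3}.)

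For $\mathrm{I}$, the intermediate estimates in the proof of Proposition~\ref{pa2} give $\tfrac12\int|DW_r|^2-\tfrac1{2^*}\int W_r^{2^*}=k\bigl(A-\sum_{i=2}^k\tfrac{B_3}{\Lambda^{N-2}|x_1-x_i|^{N-2}}+O((k/\mu)^{N-2+\sigma})\bigr)$. The key point is that every integral in that computation depends smoothly on $\Lambda\in[L_0,L_1]$, the curvature-free interaction integrals in fact only through the products $\Lambda|x_1-x_i|$, via the scaling $U_{x,\Lambda}(y)=\Lambda^{\frac{N-2}2}U_{0,1}(\Lambda(y-x))$; in particular $\int U_{x_1,\Lambda}^{2^*-1}U_{x_i,\Lambda}=g(\Lambda|x_1-x_i|)$ with $g(\rho)=B_0\rho^{-(N-2)}+O(\rho^{-(N-2+\sigma)})$ and, by the same argument applied to $g'(\rho)=\int U_{0,1}^{2^*-1}(z)\,\partial_\rho U_{0,1}(z+\rho e)\,dz$, the companion estimate $g'(\rho)=-(N-2)B_0\rho^{-(N-1)}+O(\rho^{-(N-1+\sigma)})$. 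Hence the remainder in the displayed formula is $C^1$ in $\Lambda$ with $\Lambda$-derivative of the same order, and, using that $(k/\mu)^{N-2}$ is comparable to $\mu^{-m}$ because $\mu=k^{(N-2)/(N-2-m)}$,
\[
\mathrm{I}=k\Bigl(\sum_{i=2}^k\frac{(N-2)B_3}{\Lambda^{N-1}|x_1-x_i|^{N-2}}+O(\mu^{-m-\sigma})\Bigr).
\]

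For $\mathrm{II}$, the integrand is invariant under the symmetries defining $H_s$, so $\mathrm{II}=k\int_{\Omega_1}(1-K(\tfrac{|y|}\mu))W_r^{2^*-1}\partial_\Lambda W_r$; on $\Omega_1$ one replaces $W_r^{2^*-1}\partial_\Lambda W_r$ by $U_{x_1,\Lambda}^{2^*-1}\partial_\Lambda U_{x_1,\Lambda}=\tfrac1{2^*}\partial_\Lambda U_{x_1,\Lambda}^{2^*}$, the contributions of the remaining bubbles and of $\R^N\setminus\Omega_1$ being $O(\mu^{-m-\sigma})$ exactly as in the proof of Proposition~\ref{pa2}. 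Since neither $K(\tfrac{|y|}\mu)$ nor $x_1$ depends on $\Lambda$, this identifies $\mathrm{II}$ with $\tfrac{k}{2^*}\,\partial_\Lambda\!\int_{\R^N}(1-K(\tfrac{|y|}\mu))U_{x_1,\Lambda}^{2^*}$ up to $O(k\mu^{-m-\sigma})$; and the proof of Proposition~\ref{pa2} already shows, with explicit $\Lambda$-dependence via scaling,
\[
\int_{\R^N}\bigl(1-K(\tfrac{|y|}\mu)\bigr)U_{x_1,\Lambda}^{2^*}=\frac{c_0}{\Lambda^m\mu^m}\int_{\R^N}|y_1|^m U_{0,1}^{2^*}+O\bigl(\mu^{-m}|\mu r_0-r|^2\bigr)+O(\mu^{-m-\sigma}).
\]
Differentiating in $\Lambda$ and recalling $B_1=\tfrac{c_0}{2^*}\int|y_1|^mU_{0,1}^{2^*}$ yields $\mathrm{II}=k\bigl(-\tfrac{mB_1}{\Lambda^{m+1}\mu^m}+O(\mu^{-m-\sigma}+\mu^{-m}|\mu r_0-r|^2)\bigr)$. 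Adding $\mathrm{I}$ and $\mathrm{II}$ and writing $r=|x_1|$ gives the claimed expansion, with the same constants $B_1,B_3>0$ as in Proposition~\ref{pa2}.

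The main obstacle is precisely the justification flagged above: one cannot merely differentiate the final expansion of Proposition~\ref{pa2} as a black box, because its $O(\cdot)$-remainders are not a priori differentiable with a controlled derivative. One must instead revisit each error estimate in that proof and observe that, with $\Lambda$ confined to the compact interval $[L_0,L_1]$ and every elementary integral a smooth function of $\Lambda$ (through the scaling of the standard bubble and the products $\Lambda|x_1-x_i|$), each remainder is $C^1$ in $\Lambda$ with derivative of the same size. Everything else is a routine repetition of the estimates in the proof of Proposition~\ref{pa2}, since $\partial_\Lambda W_r$ and its constituent bubbles decay at the same rates as $W_r$ and its bubbles.
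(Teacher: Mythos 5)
Your proposal is correct and follows essentially the same route as the paper: both compute $\partial_\Lambda I(W_r)=\langle I'(W_r),\partial_\Lambda W_r\rangle$ and then rerun the estimates of Proposition~\ref{pa2} with one factor of $W_r$ replaced by $\partial_\Lambda W_r$, exploiting that the interaction integrals depend on $\Lambda$ only through the scaling $\Lambda|x_1-x_i|$ and that the leading curvature term is $c_0\Lambda^{-m}\mu^{-m}\int|y_1|^m U_{0,1}^{2^*}$. The one place where your write-up is more careful than the paper's sketch is the explicit discussion of why the $O(\cdot)$-remainders from Proposition~\ref{pa2} can legitimately be differentiated in $\Lambda$ (smooth dependence on a compact interval, remainders realized as explicit smooth integrals); the paper sidesteps this by directly re-estimating the differentiated integrands on $\Omega_1$, which amounts to the same thing.
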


\begin{proof}

The proof of this proposition is similar to the proof of
Proposition~\ref{pa2}. So we just sketch it.

We have

\[
\begin{split}
\frac{\partial I(W_r)}{\partial
\Lambda}
=& k\Bigl( (2^*-1) \sum_{i=2}^k \int_{\R^N}  U_{x_1,\Lambda}^{2^*-2}
\frac{\partial U_{x_1,\Lambda}}{\partial
\Lambda}U_{x_i,\Lambda}\\
&\qquad-\int_{\Omega_1} K\bigl(\frac {|y|}\mu\bigr) W_r^{2^*-1}
\frac{\partial W_r }{\partial
\Lambda}
\Bigr).
\end{split}
\]

It is easy to check that for $y\in \Omega_1$,

\[
\Bigl| \frac{\partial }{\partial
\Lambda}\Bigl( W_r^{2^*}- U_{x_1,\Lambda}^{2^*}- 2^* U_{x_1,\Lambda}^{2^*-1}\sum_{i=2}^k U_{x_i,\Lambda}\Bigr)
\Bigr|
\le  C U_{x_1,\Lambda}^{2^*/2}\bigl(\sum_{i=2}^k U_{x_i,\Lambda}\bigr)^{2^*/2}.
\]
Thus,

\[
 \frac{\partial }{\partial
\Lambda} W_r^{2^*}= \frac{\partial }{\partial
\Lambda}U_{x_1,\Lambda}^{2^*}+2^* \frac{\partial }{\partial
\Lambda}\bigl(U_{x_1,\Lambda}^{2^*-1}\sum_{i=2}^k U_{x_i,\Lambda}\bigr)
+
O\Bigl(U_{x_1,\Lambda}^{2^*/2}\bigl(\sum_{i=2}^k U_{x_i,\Lambda}\bigr)^{2^*/2}\Bigr).
\]
As a result, we have
\[
\begin{split}
 &2^* \int_{\Omega_1}K\bigl(\frac {|y|}\mu\bigr) W_r^{2^*-1}
\frac{\partial W_r }{\partial
\Lambda}\\
=& \int_{\Omega_1}K\bigl(\frac {|y|}\mu\bigr)\frac{\partial }{\partial
\Lambda}U_{x_1,\Lambda}^{2^*}+
2^* \int_{\Omega_1}K\bigl(\frac {|y|}\mu\bigr)
\frac{\partial }{\partial
\Lambda}\bigl(U_{x_1,\Lambda}^{2^*-1}\sum_{i=2}^k U_{x_i,\Lambda}\bigr)
\\
&+O\Bigl(\int_{\Omega_1}U_{x_1,\Lambda}^{2^*/2}\bigl(\sum_{i=2}^k U_{x_i,\Lambda}\bigr)^{2^*/2}\Bigr).
\end{split}
\]
So, we obtain the desired result.

\end{proof}

\section{Basic Estimates}

For each fixed $i$ and $j$, $i\ne j$, consider the following
function

\begin{equation}\label{aa1}
g_{ij}(y)= \frac{1}{(1+|y-x_j|)^{\alpha}}\frac{1}{(1+|y-x_i|)^{\beta}},
\end{equation}
where $\alpha\ge 1$ and $\beta\ge 1$ are two constants.

\begin{lemma}\label{laa1}
For any constant $0<\sigma\le \min(\alpha,\beta)$, there is a
constant $C>0$, such that

\[
g_{ij}(y)\le \frac{C}{|x_i-x_j|^\sigma}\Bigl(\frac{1}{(1+|y-x_i|)^{\alpha+\beta-\sigma}}+
\frac{1}{(1+|y-x_j|)^{\alpha+\beta-\sigma}}\Bigr).
\]

\end{lemma}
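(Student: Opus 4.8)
The plan is to prove the pointwise bound $g_{ij}(y)\le \frac{C}{|x_i-x_j|^\sigma}\bigl((1+|y-x_i|)^{-(\alpha+\beta-\sigma)}+(1+|y-x_j|)^{-(\alpha+\beta-\sigma)}\bigr)$ by splitting $\R^N$ into the two half-spaces determined by which of $x_i$, $x_j$ is closer to $y$. By symmetry in the roles of $i$ and $j$ (and of $\alpha$, $\beta$) it suffices to treat the region $\{y:|y-x_i|\le |y-x_j|\}$; in the complementary region one simply interchanges the two indices. So fix $y$ with $|y-x_i|\le|y-x_j|$, and note that then $|x_i-x_j|\le |y-x_i|+|y-x_j|\le 2|y-x_j|$, hence $1+|y-x_j|\ge \tfrac12(1+|x_i-x_j|)\ge \tfrac12|x_i-x_j|$.

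First I would peel off a factor of $|x_i-x_j|^{-\sigma}$ from the $(1+|y-x_j|)^{-\alpha}$ term: since $\sigma\le\alpha$, write $(1+|y-x_j|)^{-\alpha}=(1+|y-x_j|)^{-\sigma}(1+|y-x_j|)^{-(\alpha-\sigma)}$ and use $1+|y-x_j|\ge c|x_i-x_j|$ on the first factor to get $(1+|y-x_j|)^{-\alpha}\le C|x_i-x_j|^{-\sigma}(1+|y-x_j|)^{-(\alpha-\sigma)}$. Then
\[
g_{ij}(y)\le \frac{C}{|x_i-x_j|^\sigma}\,\frac{1}{(1+|y-x_i|)^{\beta}}\,\frac{1}{(1+|y-x_j|)^{\alpha-\sigma}}.
\]
It remains to absorb the remaining product into $(1+|y-x_i|)^{-(\alpha+\beta-\sigma)}$. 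In the regime $|y-x_i|\le|y-x_j|$ we have $1+|y-x_j|\ge 1+|y-x_i|$ when... more carefully: since $\alpha-\sigma\ge 0$ and $1+|y-x_j|\ge 1+|y-x_i|$ is not automatic, I would instead simply bound $(1+|y-x_j|)^{-(\alpha-\sigma)}\le (1+|y-x_i|)^{-(\alpha-\sigma)}$, which does hold because $|y-x_j|\ge|y-x_i|$. This gives $g_{ij}(y)\le C|x_i-x_j|^{-\sigma}(1+|y-x_i|)^{-(\alpha+\beta-\sigma)}$, which is dominated by the right-hand side of the claimed inequality. Combining with the symmetric estimate on $\{|y-x_j|\le|y-x_i|\}$ completes the proof.

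There is essentially no hard step here; the only point requiring a little care is choosing which of the two decaying factors to sacrifice the $|x_i-x_j|^{-\sigma}$ from, and making sure that $\sigma\le\min(\alpha,\beta)$ is exactly what guarantees both the extracted power $(1+|y-x_j|)^{-\sigma}$ is available and the leftover exponent $\alpha-\sigma$ (resp.\ $\beta-\sigma$) stays nonnegative so that the monotonicity bound $(1+|y-x_j|)^{-(\alpha-\sigma)}\le(1+|y-x_i|)^{-(\alpha-\sigma)}$ goes the right way. The constant $C$ depends only on $N$, $\alpha$, $\beta$ (through the factor $2^\sigma$ and the comparison $1+|x_i-x_j|\ge|x_i-x_j|$), and in particular not on $i$, $j$, or $k$, which is what is needed for the later applications.
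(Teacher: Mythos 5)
Your proof is correct, and it uses a different --- and cleaner --- decomposition than the paper. The paper partitions $\R^N$ into the two balls $B_{\frac12 d_{ij}}(x_i)$, $B_{\frac12 d_{ij}}(x_j)$ (where $d_{ij}=|x_i-x_j|$) and their common complement, and then further splits the complement into the subcases $|y-x_i|\ge 2d_{ij}$ and $|y-x_i|\le 2d_{ij}$, giving four cases in total; you instead split into the two half-spaces $\{|y-x_i|\le|y-x_j|\}$ and $\{|y-x_j|\le|y-x_i|\}$. The underlying mechanism is the same in both arguments: on each piece one identifies the factor whose base point is guaranteed to be at distance $\gtrsim d_{ij}$ from $y$, peels off $d_{ij}^{-\sigma}$ from it (this is exactly where $\sigma\le\min(\alpha,\beta)$ is used), and pushes the leftover power onto the other factor by the monotonicity $(1+t)^{-\gamma}$ in $t$ with $\gamma\ge 0$. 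Your Voronoi split has the advantage that the two facts you need --- $|y-x_j|\ge\frac12 d_{ij}$ for the peeling step and $|y-x_j|\ge|y-x_i|$ for the monotonicity step --- are both immediate consequences of the defining inequality of the region, so the ``far from both points'' case and its subdivision simply disappear and the whole lemma reduces to one short computation plus a role swap. This is a genuine streamlining with no loss of generality or sharpness.
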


\begin{proof}

Let $d_{ij}=|x_i-x_j|$.  If $y\in  B_{\frac12 d_{ij}}(x_i)$, then

\[
|y-x_j|\ge \frac12 |x_j-x_i|,\quad |y-x_j|\ge \frac12 |y-x_i|,
\]
which gives

\[
g_{ij}\le \frac{C}{|x_i-x_j|^\sigma}\frac{1}{(1+|y-x_i|)^{\alpha+\beta-\sigma}}, \quad
y\in  B_{\frac12 d_{ij}}(x_i).
\]

Similarly, we can prove

\[
g_{ij}\le \frac{C}{|x_i-x_j|^\sigma}\frac{1}{(1+|y-x_j|)^{\alpha+\beta-\sigma}}, \quad
y\in  B_{\frac12 d_{ij}}(x_j).
\]

Now we consider  $y\in \R^N\setminus \bigl( B_{\frac12
d_{ij}}(x_i)\cup B_{\frac12 d_{ij}}(x_j)\bigr)$. Then we have

\[
|y-x_i|\ge \frac12 |x_j-x_i|,\quad |y-x_j|\ge \frac12 |x_j-x_i|.
\]

If $|y-x_i|\ge 2|x_i-x_j|$, then

\[
|y-x_j|\ge |y-x_i|-|x_i-x_j|\ge \frac12 |y-x_i|.
\]
As a result,

\[
g_{ij}\le \frac{C}{(1+|y-x_i|)^{\alpha+\beta}}\le \frac{C_1}{|x_i-x_j|^\sigma}
\frac{1}{(1+|y-x_i|)^{\alpha+\beta-\sigma}},
\]
because $|y-x_i|\ge \frac12 |x_j-x_i|$.

If $|y-x_i|\le 2|x_i-x_j|$, then

\[
g_{ij} \le \frac{1}{(1+|y-x_i|)^\alpha}
\frac{C}{|x_i-x_j|^{\beta}}\le \frac{C}{|x_i-x_j|^\sigma}
\frac{1}{(1+|y-x_i|)^{\alpha+\beta-\sigma}},
\]
because $|y-x_j|\ge \frac12 |x_j-x_i|$.

\end{proof}

\begin{lemma}\label{laa2}
For any constant $0<\sigma<N-2$, there is a constant $C>0$, such
that

\[
\int_{\R^N} \frac1{|y-z|^{N-2}}\frac1{(1+|z|)^{2+\sigma}}\,dz\le \frac C{(1+|y|)^{\sigma}}.
\]

\end{lemma}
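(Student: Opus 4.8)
The plan is to estimate the convolution integral by splitting $\R^N$ into three regions according to the relative sizes of $|z|$, $|y-z|$ and $|y|$. Write $d=|y|$ and split $\R^N = A_1 \cup A_2 \cup A_3$, where $A_1 = B_{d/2}(0)$ (the region where $z$ is near the origin), $A_2 = B_{d/2}(y)$ (the region where $z$ is near $y$), and $A_3$ is the complement of $A_1 \cup A_2$ (the ``far'' region where both $|z|$ and $|y-z|$ are comparable to or larger than $d$). Note the case $d \le 1$ is trivial: the right-hand side is bounded below by a positive constant, while the left-hand side is uniformly bounded since $\frac1{(1+|z|)^{2+\sigma}} \in L^{N/(N-2)+\epsilon}$ locally and the Riesz kernel $|y-z|^{-(N-2)}$ is locally $L^{N/(N-2)-\epsilon}$; so we may assume $d \ge 1$.

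The key steps are as follows. On $A_1$ we have $|y-z| \ge d/2$, so $\frac1{|y-z|^{N-2}} \le \frac{C}{d^{N-2}}$, and $\int_{A_1} \frac{dz}{(1+|z|)^{2+\sigma}} \le C d^{N-2-\sigma}$ (integrating in spherical shells, using $2+\sigma < N$), which gives a contribution $\le C d^{-\sigma} = C(1+|y|)^{-\sigma}$ after replacing $d$ by $1+|y|$. On $A_2$ we have $|z| \ge d/2$, so $\frac1{(1+|z|)^{2+\sigma}} \le \frac{C}{d^{2+\sigma}}$, and $\int_{B_{d/2}(y)} \frac{dz}{|y-z|^{N-2}} \le C d^2$, giving a contribution $\le C d^{-\sigma}$ as well. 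On $A_3$ both $|y-z| \ge d/2$ and $|z| \ge d/2$; moreover on this region $|z|$ and $|y-z|$ are comparable, so for $|z|\ge d$ one bounds $\frac1{|y-z|^{N-2}}\frac1{(1+|z|)^{2+\sigma}} \le \frac{C}{(1+|z|)^{N+\sigma}}$ and integrates over $\{|z| \ge d/2\}$ to obtain $\le C d^{-\sigma}$ again. Summing the three contributions yields the claim.

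I do not expect a serious obstacle here — this is a standard estimate for Riesz potentials of radially decaying functions. The only point requiring a little care is the bookkeeping in region $A_3$: one must check that after using $|y-z| \ge c|z|$ there (which holds because $z$ lies outside $B_{d/2}(y)$ and $|z|$ cannot be too large relative to $|y-z|$ without $|y-z|$ itself being large), the resulting integrand $(1+|z|)^{-(N+\sigma)}$ is integrable over the exterior domain $\{|z|\ge d/2\}$ and produces exactly the power $d^{-\sigma}$. A clean alternative for $A_3$ is simply to observe that there $\frac1{|y-z|^{N-2}} \le C d^{-(N-2)/2}|y-z|^{-(N-2)/2}$ and $\frac1{(1+|z|)^{2+\sigma}} \le C d^{-(2+\sigma)/2}(1+|z|)^{-(2+\sigma)/2}$ is not needed; the direct shell integration above suffices. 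Throughout one uses only $0 < \sigma < N-2$, which guarantees both $2+\sigma < N$ (needed in $A_1$) and $\sigma > 0$ (needed for convergence of the exterior integral in $A_3$).
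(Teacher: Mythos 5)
Your proposal is correct and takes essentially the same approach as the paper: the same three-region decomposition into a ball around the origin, a ball around $y$, and the exterior, with the same case analysis on the exterior (comparing $|z|$ to $|y|$ to conclude $|y-z|$ and $|z|$ are comparable there) and the same shell integration yielding $d^{-\sigma}$.
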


\begin{proof}
The result is well known. For the sake of completeness, we give the
proof.

We just need to obtain the estimate for $|y|\ge 2$. Let
$d=\frac12|y|$. Then, we have

\[
\begin{split}
&\int_{B_d(0)} \frac1{|y-z|^{N-2}}\frac1{(1+|z|)^{2+\sigma}}\,dz\le \frac{C}{d^{N-2}}
\int_{B_d(0)} \frac1{(1+|z|)^{2+\sigma}}\,dz\\
\le &\frac{C}{d^{N-2}}d^{N-2-\sigma}\le \frac{C}{d^\sigma},
\end{split}
\]
and

\[
\int_{B_d(y)} \frac1{|y-z|^{N-2}}\frac1{(1+|z|)^{2+\sigma}}\,dz\le
\frac C{d^{2+\sigma}}\int_{B_d(y)} \frac1{|z-y|^{N-2}}\,dz
\le \frac{C}{d^\sigma}.
\]

Suppose that $z\in \R^N\setminus \bigl(B_d(0)\cup B_d(y)\bigr)$.
Then

\[
|z-y|\ge \frac12 |y|,\quad  |z|\ge \frac12 |y|.
\]

If $|z|\ge 2|y|$, then $|z-y|\ge |z|-|y|\ge \frac12|z|$. As a
result,

\[
\frac1{|y-z|^{N-2}}\frac1{(1+|z|)^{2+\sigma}}\le \frac{C}{|z|^{N-2} (1+|z|)^{2+\sigma}}.
\]

If $|z|\le 2|y|$, then

\[
\frac1{|y-z|^{N-2}}\frac1{(1+|z|)^{2+\sigma}}
\le \frac{C}{|y|^{N-2} (1+|z|)^{2+\sigma}}\le \frac{C_1}{|z|^{N-2} (1+|z|)^{2+\sigma}}.
\]
Thus, we have proved that

\[
\frac1{|y-z|^{N-2}}\frac1{(1+|z|)^{2+\sigma}}
\le \frac{C}{|z|^{N-2} (1+|z|)^{2+\sigma}},\quad
z\in \R^N\setminus \bigl(B_d(0)\cup B_d(y)\bigr),
\]
which, give

\[
\int_{\R^N\setminus \bigl(B_d(0)\cup B_d(y)\bigr)}\frac1{|y-z|^{N-2}}\frac1{(1+|z|)^{2+\sigma}}\,dz
\le \frac{C}{d^\sigma}.
\]

\end{proof}

Let recall that

\[
W_r(y)=\bigl(N(N-2)\bigr)^{\frac{N-2}4}\sum_{j=1}^k \frac{\Lambda^{\frac{N-2}2}}{(1+\Lambda^2|y-x_j|^2)^{\frac{N-2}2}}.
\]

\begin{lemma}\label{laa3}

Suppose that $N\ge 5$ and $\tau\in (0, 2)$. Then there is a small
$\theta>0$, such that
\[
\begin{split}
&\int_{\R^N}\frac1{|y-z|^{N-2}} W_r^{\frac4{N-2}}(z)\sum_{j=1}^k\frac1{(1+|z-x_j|)^{\frac{N-2}2+\tau}}\,dz\\
\le & C\sum_{j=1}^k\frac1{(1+|y-x_j|)^{\frac{N-2}2+\tau+\theta}}+o(1)\sum_{j=1}^k\frac1{(1+|y-x_j|)^{\frac{N-2}2+\tau}},
\end{split}
\]
where $o(1)\to 0$ as $k\to +\infty$.

\end{lemma}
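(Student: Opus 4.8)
The plan is to estimate the integrand pointwise and integrate term by term. Writing $W_r^{4/(N-2)}(z)\sum_j(1+|z-x_j|)^{-(\frac{N-2}2+\tau)}$ as a double sum over the centres, the ``diagonal'' part $i=j$ will produce the first term on the right (the one with the decay gain $\theta$), while the ``off--diagonal'' part $i\ne j$ will only cost a factor that tends to $0$ as $k\to\infty$ and hence yields the $o(1)$ term.

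First I would bound $W_r^{4/(N-2)}$ pointwise. Since $L_0\le\Lambda\le L_1$ we have $W_r=\sum_{i=1}^k U_{x_i,\Lambda}$ with $U_{x_i,\Lambda}(z)\le C(1+|z-x_i|)^{-(N-2)}$. For $N\ge6$ the exponent $\frac4{N-2}\le1$ and subadditivity of $t\mapsto t^{4/(N-2)}$ give $W_r^{4/(N-2)}(z)\le C\sum_i(1+|z-x_i|)^{-4}$. For $N=5$ the exponent $\frac43$ exceeds $1$, so instead I would factor $W_r^{4/3}=W_r\cdot W_r^{1/3}$ and bound each factor by subadditivity (using $\frac{N-2}3=1$), obtaining $W_r^{4/3}(z)\le C\bigl(\sum_i(1+|z-x_i|)^{-3}\bigr)\bigl(\sum_i(1+|z-x_i|)^{-1}\bigr)$; expanding and treating the cross terms once with Lemma~\ref{laa1} reduces this, up to a remainder $o(1)\sum_i(1+|z-x_i|)^{-(4-\sigma_1)}$ with $3\le4-\sigma_1<4$, to the same bound $C\sum_i(1+|z-x_i|)^{-4}$. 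In all cases the problem becomes the estimation of sums of
\[
J_{ij}(y):=\int_{\R^N}\frac{dz}{|y-z|^{N-2}\,(1+|z-x_i|)^{4}\,(1+|z-x_j|)^{\frac{N-2}2+\tau}}.
\]

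For $i=j$ the integrand is $|y-z|^{-(N-2)}(1+|z-x_j|)^{-(\frac{N+6}2+\tau)}$, so Lemma~\ref{laa2} (applied after lowering the exponent $\frac{N+6}2+\tau$ to one in $(2,N)$ when it is $\ge N$) gives $J_{jj}(y)\le C(1+|y-x_j|)^{-q}$ for any $q<\min\bigl(\tfrac{N+2}2+\tau,\,N-2\bigr)$; since $N\ge5$ and $\tau=1+\bar\eta$ with $\bar\eta$ small, $q=\frac{N-2}2+\tau+\theta$ is admissible for a small $\theta>0$, and summing over $j$ yields the first term. For $i\ne j$ I would apply Lemma~\ref{laa1} with $\alpha=4$, $\beta=\frac{N-2}2+\tau$ and a parameter $\sigma$ with $1<\sigma\le2$ close to $2$ (admissible since $\frac{N-2}2+\tau>2$), getting
\[
\frac1{(1+|z-x_i|)^{4}}\frac1{(1+|z-x_j|)^{\frac{N-2}2+\tau}}\le\frac C{|x_i-x_j|^{\sigma}}\Bigl(\frac1{(1+|z-x_i|)^{\frac{N+6}2+\tau-\sigma}}+\frac1{(1+|z-x_j|)^{\frac{N+6}2+\tau-\sigma}}\Bigr).
\]
By the rotational symmetry of $\{x_1,\dots,x_k\}$, $\sum_{l\ne1}|x_1-x_l|^{-\sigma}$ is independent of the base point, and since $|x_1-x_l|=2|x_1|\sin\frac{(l-1)\pi}k\ge c\,\mu\,\frac{l-1}k$ with $|x_1|\sim\mu r_0$ and $\sigma>1$, it is $\le C(k/\mu)^{\sigma}$, which tends to $0$ because $k/\mu=k^{-m/(N-2-m)}\to0$. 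Hence, after integrating against $|y-z|^{-(N-2)}$ with Lemma~\ref{laa2} and discarding surplus decay (this is where $\sigma\le2$ is used), $\sum_{i\ne j}J_{ij}(y)$ is bounded by $o(1)\sum_j(1+|y-x_j|)^{-(\frac{N-2}2+\tau)}$, the second term. Adding the diagonal and off--diagonal pieces (and the harmless $N=5$ remainder, which is already an $o(1)$ term of the required form) proves the lemma.

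The part I expect to be delicate is the exponent bookkeeping, which is tightest at $N=5$: the single parameter $\sigma$ must be large enough ($\sigma>1$) that $\sum_l|x_1-x_l|^{-\sigma}=O((k/\mu)^{\sigma})$ is genuinely small, small enough ($\sigma\le2$) that the decay surviving the convolution is still at least $\frac{N-2}2+\tau$, and such that the intermediate exponent $\frac{N+6}2+\tau-\sigma$ stays in the range where Lemma~\ref{laa2} (or its super--integrable version) applies; this is exactly where the hypotheses that $\tau$ is close to $1$ and that $m\in[2,N-2)$ (which forces $k/\mu\to0$) come in. The failure of subadditivity for $W_r^{4/(N-2)}$ when $N=5$ is only a minor nuisance, handled by the factorisation above. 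Apart from this exponent juggling I foresee no conceptual obstacle.
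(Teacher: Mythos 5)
Your proposal is essentially correct, and for $N\ge6$ it reproduces the paper's argument verbatim: bound $W_r^{4/(N-2)}$ by subadditivity (since $4/(N-2)\le1$), split the double sum into diagonal and off-diagonal pieces, handle the diagonal piece by first lowering the exponent $4+\frac{N-2}2+\tau$ to $2+\frac{N-2}2+\tau+\theta$ and then invoking Lemma~\ref{laa2}, and handle the off-diagonal piece with Lemma~\ref{laa1} followed by Lemma~\ref{laa2} and the observation that $\sum_{j\ne1}|x_1-x_j|^{-\sigma}=o(1)$. (The paper fixes $\sigma=2$ outright in the off-diagonal application of Lemma~\ref{laa1}; you allow $1<\sigma\le2$ close to $2$, which gives the same conclusion.)

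Where you genuinely diverge is $N=5$. The paper does not factor $W_r^{4/3}=W_r\cdot W_r^{1/3}$. It instead works on the sector $\Omega_1=\{z:\,\langle z'/|z'|,\,x_1/|x_1|\rangle\ge\cos(\pi/k)\}$, on which $|z-x_j|\ge|z-x_1|$ for all $j\ne1$; this yields the single-centre bound $W_r(z)\le C(1+|z-x_1|)^{-2}$ on $\Omega_1$, hence $W_r^{4/3}(z)\le C(1+|z-x_1|)^{-8/3}$, and the whole integrand becomes a one-centre kernel that Lemma~\ref{laa2} handles in one stroke. Your factorisation is more symmetric but introduces an extra layer of bookkeeping: after absorbing the cross terms of $\bigl(\sum(1+|z-x_i|)^{-3}\bigr)\bigl(\sum(1+|z-x_l|)^{-1}\bigr)$ with Lemma~\ref{laa1}, the remainder $o(1)\sum_i(1+|z-x_i|)^{-(4-\sigma_1)}$ is still a function of $z$ that must be multiplied by $\sum_j(1+|z-x_j|)^{-(\frac{N-2}2+\tau)}$, split again into diagonal and off-diagonal pieces, and passed through the convolution. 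That second pass does close (one needs, say, $\sigma_1\in(\tfrac13,1)$ and a second separation parameter $\sigma_2\in(1,\,2-\sigma_1)$, and it uses $\mu\ge k^3$ when $N=5$), but it is not automatic, and your concluding sentence that the remainder is ``already an $o(1)$ term of the required form'' is too quick — it becomes so only after that second round of estimates. Apart from this point, the exponent constraints you list are the right ones, and both routes prove the lemma; the sectoral decomposition of the paper simply sidesteps the extra layer.
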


\begin{proof}
 Firstly,  we consider $N\ge 6$. Then $\frac4{N-2}\le 1$. Thus

\[
W_r^{\frac4{N-2}}(z)\le \sum_{i=1}^k\frac1{(1+|z-x_i|)^{4}}.
\]
So, we obtain

\[
\begin{split}
&\int_{\R^N}\frac1{|y-z|^{N-2}} W_r^{\frac4{N-2}}(z)\sum_{j=1}^k\frac1{(1+|z-x_j|)^{\frac{N-2}2+\tau}}\,dz\\
\le & \sum_{j=1}^k\int_{\R^N}\frac1{|y-z|^{N-2}} \frac1{(1+|z-x_j|)^{4+\frac{N-2}2+\tau}}\,dz\\
&+\sum_{j=1}^k\sum_{i\ne j}\int_{\R^N}\frac1{|y-z|^{N-2}}
\frac1{(1+|z-x_i|)^{4}} \frac1{(1+|z-x_j|)^{\frac{N-2}2+\tau}}\,dz.
\end{split}
\]

By Lemma~\ref{laa2}, if $\theta>0$ is so small that
$\frac{N-2}2+\tau+\theta<N-2$, then

\[
\begin{split}
&
\int_{\R^N}\frac1{|y-z|^{N-2}} \frac1{(1+|z-x_j|)^{4+\frac{N-2}2+\tau}}\,dz\\
\le &
\int_{\R^N}\frac1{|y-z|^{N-2}} \frac1{(1+|z-x_j|)^{2+\frac{N-2}2+\tau+\theta}}\,dz\le
\frac C{(1+|y-x_j|)^{\frac{N-2}2+\tau+\theta}}.
\end{split}
\]

On the other hand, it follows from Lemmas~\ref{laa1} and \ref{laa2}
that for $i\ne j$,

\[
\begin{split}
&\int_{\R^N}\frac1{|y-z|^{N-2}}
\frac1{(1+|z-x_i|)^{4}} \frac1{(1+|z-x_j|)^{\frac{N-2}2+\tau}}\,dz\\
\le & \frac{C}{|x_i-x_j|^2} \int_{\R^N}\frac1{|y-z|^{N-2}}\Bigl(
\frac1{(1+|z-x_i|)^{2+\frac{N-2}2+\tau}}+ \frac1{(1+|z-x_j|)^{2+\frac{N-2}2+\tau}}\Bigr)\,dz\\
\le & \frac{C}{|x_i-x_j|^2}
\Bigl(
\frac1{(1+|y-x_i|)^{\frac{N-2}2+\tau}}+ \frac1{(1+|y-x_j|)^{\frac{N-2}2+\tau}}\Bigr).
\end{split}
\]

Noting that

\[
\sum_{j\ne i}\frac1{|x_i-x_j|^2}\le \frac{C k^2}{\mu^2}\sum_{j=1}^k \frac1{j^2}=o(1),
\]
we obtain

\[
\begin{split}
&\sum_{j=1}^k\sum_{i\ne j}\int_{\R^N}\frac1{|y-z|^{N-2}}
\frac1{(1+|z-x_i|)^{4}} \frac1{(1+|z-x_j|)^{\frac{N-2}2+\tau}}\,dz\\
= & o(1)\sum_{j=1}^k
\frac 1{(1+|y-x_j|)^{\frac{N-2}2+\tau}}.
\end{split}
\]

Suppose now that  $N=5$. Recall that

\[
\Omega_{j}=\bigl\{ y:\;y=(y',y'')\in \R^2\times\R^{N-2},\;
  \bigl\langle \frac {y'}{|y'|}, \frac{x_j}{|x_j|}\bigr\rangle \ge \cos\frac {\pi}k\bigr\}.
\]

For $z\in\Omega_1$, we have $|z-x_j|\ge |z-x_1|$. Using
Lemma~\ref{laa1}, we obtain

\[
\begin{split}
&\sum_{j=2}^k\frac1{(1+|z-x_j|)^{3}}\le \frac1{(1+|z-x_1|)^{2}}\sum_{j=2}^k\frac1{1+|z-x_j|}
\\
\le & \frac C{(1+|z-x_1|)^{2}} \sum_{j=2}^k\frac 1{|x_j-x_1|}\le
 \frac C {(1+|z-x_1|)^{2}}.
 \end{split}
\]
Thus,

\[
W_r^{\frac43}(z)\le \frac C{(1+|z-x_1|)^{\frac 83}}.
\]
As a result, for $z\in\Omega_1$, using Lemma~\ref{laa1} again, we
find

\[
\begin{split}
&W_r^{\frac43}(z)\sum_{j=1}^k\frac1{(1+|z-x_j|)^{\frac 32+\tau}}\\
\le & \frac C{(1+|z-x_1|)^{\frac83+\frac 32+\tau}}+\frac C{(1+|z-x_1|)^{\frac13+2+\frac 32+\tau}}\sum_{j=2}^k
\frac1{|x_j-x_1|^{\frac13}}\\
\le & \frac C{(1+|z-x_1|)^{\frac83+\frac 32+\tau}}+\frac k{\mu^{\frac13}}
\frac C{(1+|z-x_1|)^{\frac13+2+\frac 32+\tau}}\\
\le & \frac C{(1+|z-x_1|)^{\frac13+2+\frac 32+\tau}},
\end{split}
\]
since

\[
\frac k{\mu^{\frac13}}\le \frac {Ck}{k^{\frac 3{3-m}\frac13}}\le C.
\]
So, we obtain

\[
\begin{split}
&\int_{\Omega_1}\frac1{|y-z|^{3}}W_r^{\frac43}(z)\sum_{j=1}^k\frac1{(1+|z-x_j|)^{\frac 32+\tau}}\,dz\\
\le & \int_{\Omega_1}\frac1{|y-z|^{3}} \frac C{(1+|z-x_1|)^{\frac13+2+\frac 32+\tau}}\,dz\le
\frac C{(1+|y-x_1|)^{\frac13+\frac 32+\tau}}.
\end{split}
\]
which gives

\[
\begin{split}
&\int_{\Omega}\frac1{|y-z|^{3}}W_r^{\frac43}(z)\sum_{j=1}^k\frac1{(1+|z-x_j|)^{\frac 32+\tau}}\,dz\\
=&
\sum_{i=1}^k \int_{\Omega_i}\frac1{|y-z|^{3}}W_r^{\frac43}(z)\sum_{j=1}^k\frac1{(1+|z-x_j|)^{\frac 32+\tau}}\,dz\\
\le &  \sum_{i=1}^k
\frac C{(1+|y-x_i|)^{\frac13+\frac 32+\tau}}.
\end{split}
\]

\end{proof}

\end{document}